\documentclass[16pt]{article}

\usepackage{graphicx,mathrsfs,epstopdf,subfigure}
\usepackage{amsmath,amsfonts,amssymb,amsthm}
\usepackage[colorlinks,citecolor=blue,urlcolor=blue]{hyperref}
\usepackage{algorithmic}

\usepackage{tikz}
\usepackage{verbatim}
\usetikzlibrary{positioning}
\usepackage{bbm}
\DeclareGraphicsRule{.tif}{png}{.png}{`convert #1 `dirname #1`/`basename #1 .tif`.png}
\usepackage[margin=1.2in]{geometry}
\usepackage{setspace}
\usepackage{fancyhdr}
\linespread{1.35}
\usepackage[ruled, 
lined, 
commentsnumbered]{algorithm2e}

\numberwithin{equation}{section}

\pdfminorversion=4

\newtheorem{theorem}{Theorem}[section]
\newtheorem{lemma}[theorem]{Lemma}
\newtheorem{proposition}[theorem]{Proposition}
\newtheorem{corollary}[theorem]{Corollary}

\newtheorem{conjecture}[theorem]{Conjecture}

\theoremstyle{definition}
\newtheorem{definition}[theorem]{Definition}
\newtheorem{example}[theorem]{Example}
\newtheorem{remark}[theorem]{Remark}

\newcommand{\QED}{\ifhmode\unskip\nobreak\fi\quad {\rm Q.E.D.}} 

\newcommand{\RR}{\mathbb{R}}
\newcommand{\R}{\mathbb{R}}

\newcommand{\Z}{\mathbb{Z}}

\newcommand{\mtp}{\text{MTP}_2}
\newcommand{\conv}{\mathrm{conv}}
\newcommand{\MMconv}{\mathrm{MMconv}}
\usepackage{authblk}

\pagestyle{plain}
%


\title{Maximum Likelihood Estimation for \\ Totally Positive Log-Concave Densities}
\date{}
\author[1,2]{\uppercase{Elina Robeva}}
\author[3,4]{\uppercase{Bernd Sturmfels}}
\author[5]{\uppercase{Ngoc Tran}}
\author[1]{\uppercase{Caroline Uhler}}
\affil[1]{Massachusetts Institute of Technology}
\affil[2]{University of British Columbia}
\affil[3]{University of California at Berkeley}
\affil[4]{Max Planck Institute Leipzig}
\affil[5]{University of Texas at Austin}
\pagestyle{fancy}
\lhead{\textsc{MLE for MTP$_2$ and Log-Concave Densities}}

\begin{document}
\maketitle

\begin{abstract}
 We study nonparametric maximum likelihood 
estimation for two classes of
multivariate distributions that imply strong forms of positive dependence;
namely log-supermodular (\emph{MTP$_2$}) distributions
     and  \emph{log-$L^\natural$-concave} (\emph{LLC}) distributions. 
In both cases
     we also assume log-concavity in order to ensure boundedness of the likelihood function. Given $n$ independent and identically distributed random vectors from one of our distributions, 
  the maximum likelihood estimator (MLE) exists a.s.~and is unique a.e.~with probability one when $n\geq 3$.
  This holds independently of the ambient dimension $d$.
  We conjecture that the MLE is always the exponential of a tent function.
  We prove this result for samples in $\{0,1\}^d$
    or in $\mathbb{R}^2$ under MTP$_2$, and
    for samples in $\mathbb{Q}^d$      under LLC.
         Finally, we provide a conditional gradient algorithm for computing the maximum likelihood estimate.
         \end{abstract}

\linespread{1.5}

\textit{ Keywords:} log-concavity; nonparametric density estimation; shape-constrained density estimation; supermodularity; total positivity.

\section{Introduction}
Let $X_1,\dots , X_n\in\mathbb{R}^d$ be independent and identically distributed (i.i.d.) samples from a distribution with density function $f_0$. 
Nonparametric methods are attractive for computing  an estimate for $f_0$ since they do
not impose any parametric assumptions. Popular such methods
include kernel density estimation, adaptive smoothing and neighbor-based techniques. For details we refer to the
surveys \cite{izenman1991review,turlach1993bandwidth, scott2015multivariate, silverman2018density,wand1994kernel,chen2017tutorial,wasserman2016topological}
 and references therein. These techniques
 require choosing a smoothing parameter, either in the form of a bandwidth for kernel density estimation, or a regularization penalty and clustering parameters.

An approach gaining popularity in recent years that does not require the choice of a tuning parameter is shape-constrained density estimation. 
Here one seeks to maximize the (possibly weighted) log-likelihood function
$$\ell(f)\,\, =\,\,\sum_{i=1}^n w_i\log f(X_i),$$
subject to a shape constraint  $f\in\mathcal{F}$. The weights $w_i$ are fixed, positive, and satisfy $\sum_{i=1}^n w_i = 1$. They can be interpreted as the relative importance or confidence among different samples. If the class $\mathcal{F}$ of shapes is unrestricted, i.e., it contains all density functions $f : \R^d \to \R$, then the maximum likelihood estimator (MLE) does not exist. The likelihood function is unbounded, 
even if the density function is constrained to be unimodal.
Thus, it is of interest to identify shape constraints that restrict $\mathcal{F}$ sufficiently to make the MLE well-defined with good numerical and theoretical properties, but at the same time keep $\mathcal{F}$ large enough to be
relevant for applications.

To this end, different shape constraints have been considered: monotonicity (studied in \cite{Grenander}
for $d=1$ and extended in \cite{Polonik} to $d > 1$), convexity (studied in \cite{Groeneboom}
for $d=1$ and extended in \cite{Seregin} to $d > 1$), and, most prominently, log-concavity.
The log-concave MLE  $\hat{f}_n$ was first introduced by Walther~\cite{Walther_2002} and has been studied
in great detail recently. 
In particular, the MLE exists and is almost everywhere unique with probability one when $n\geq d+1$~\cite{CSS, Duembgen}. Furthermore, Cule, Samworth and Stewart~\cite{CSS} showed that 
 $\log(\hat{f}_n)$ is a piecewise linear concave function;
 namely, it is a \emph{tent function} with tent poles at the observations $x_i\in\RR^d$. Such a tent function induces a regular subdivision of 
 the configuration $X=\{x_1, \ldots ,x_n\}\subset \RR^d$, and in fact any regular subdivision can arise 
 with positive probability as the MLE for some set of weights~\cite{RSU}. For a recent review of 
 shape-constrained density estimation see Groeneboom and Jongbloed~\cite{GJ_review}.

In this paper, we consider the problem of maximum likelihood estimation under \emph{total positivity}, a special form of positive dependence. A distribution defined by a density function $f$ over $\mathcal{X}=\prod_{i=1}^d\mathcal{X}_{i}$, where each set $\mathcal{X}_i$ is totally ordered, is \emph{multivariate totally positive of order 2} ($\mtp$) if
$$
f(x)f(y)\quad\leq \quad f(x\wedge y)f(x\vee y)\qquad\mbox{for all }x,y\in \mathcal{X},
$$
where $x\wedge y$ and $x\vee y$ are the element-wise minimum and maximum. We restrict our attention to densities on $\mathcal X = \mathbb R^d$. 
If $f$ is strictly positive, then $f$ is $\mtp$ if and only if $f$ is \emph{log-supermodular}, i.e., $\log(f)$ is supermodular. $\mtp$ was introduced in~\cite{fortuin1971correlation}. It implies \emph{positive association}, an important property in probability theory and statistical physics, which is usually difficult to verify. In fact, most notions of positive dependence are implied by $\mtp$; see for example~\cite{colangelo2005some} for a recent overview. The special case of Gaussian $\mtp$ distributions was studied by Karlin and Rinott~\cite{karlinGaussian} and in~\cite{slawski2015estimation, LUZ} from 
the perspective of MLE and optimization. More recently, estimating two-dimensional smooth MTP$_2$ densities was studied in \cite{HMRR2}, and estimating supermodular matrices was studied in~\cite{HMRR1}. 

Binary MTP$_2$ distributions have long been known to have connections with statistical mechanics \cite{fortuin1971correlation}, computer storage \cite{van1992lru}, item response theory \cite{bartolucci2000likelihood,junker2001nonparametric,ligtvoet2015test}. More recently, they have been used for analyzing psychological disorders \cite{lauritzen2019total}.
By \cite[Proposition 7.4]{murota2003discrete}, a probability mass function $f$ on $\{0,1\}^d$ is MTP$_2$ if and only if $-\log(f)$ is a submodular set function. Since the set of submodular set functions is convex, the MLE problem under the MTP$_2$ constraint for binary variables is a convex optimization problem over a convex set. \cite{bartolucci2000likelihood} prove this under a linear reparametrization of $-\log(f)$, and propose to use the constrained Fisher scoring algorithm to find the MLE in the binary case. 

Unfortunately, in the continuous case maximum likelihood estimation under $\mtp$ is ill-defined, since the likelihood function is unbounded. For this reason, we consider the problem of nonparametric density estimation, where $\mathcal{F}$ consists of all \emph{$\mtp$ and log-concave} density functions $f:\RR^d\to\RR$. A strictly positive function $f:\RR^d\to\RR$ is log-concave if $\log(f)$ is a concave function, i.e.,  \begin{equation}
 \label{eq:loco} f(x)^\lambda f(y)^{1-\lambda}\, \leq \, f \Big(\lambda x + (1-\lambda) y\Big) \quad \hbox{ for all $x,y \in \RR^d, \lambda \in [0,1]$.} 
 \end{equation}
The class $\mathcal{F}$ of log-concave $\mtp$ densities contains many interesting distributions, such as totally positive Gaussians~\cite{LUZ}.
These include Brownian motion tree models, which are widely used to model evolutionary processes~\cite{felsenstein_maximum-likelihood_1973}. Totally positive Gaussians have also been used for covariance estimation in portfolio selection \cite{agrawal2019covariance}. The densities in our class $\mathcal{F}$ have various desirable properties; in particular, the class $\mathcal{F}$ is closed under marginalization and conditioning~\cite{KarRin80, MTP2Markov2015}.

We also consider a subclass of $\mtp$ distributions where the logarithm of the density function is  \emph{$L^\natural$-concave}. A function $g:\mathbb{R}^d\to \mathbb{R}$ is $L^\natural$-concave~if
$$g(x)+g(y)\leq g((x+\alpha \mathbf{1})\wedge y) + g(x\vee (y-\alpha \mathbf{1})) \quad \textrm{for all } \alpha\geq 0 \textrm{ and } x,y\in \mathbb{R}^d,$$
where $\mathbf{1}$ denotes the all ones vector~\cite{murota2003discrete,murota2009recent}.
The set $\mathcal{L}$ of log-$L^\natural$-concave (LLC) densities is an appealing subclass of MTP$_2$ densities. For example, a $d$-dimensional Gaussian distribution is MTP$_2$ if and only if its inverse covariance matrix $K$ is an {\em M-matrix}, i.e. $K_{ij}\leq 0$ for all $ i\neq j$, and it is
LLC if and only if $K$ is also \emph{diagonally dominant}, i.e.
$$K_{ij}\leq 0 \,\textrm{ for all \,} i\neq j \quad \textrm{and}\quad  \sum_{j=1}^d K_{ij}\geq 0 
\,\textrm{ for all }\, i=1,\ldots d.$$
For Gaussian graphical models, if the density is LLC, loopy belief propagation converges and hence marginal distributions can be computed efficiently \cite{LBP_inverse_M,walk_summable}. 
Such functions also appear naturally in economics~\cite{tamura2004applications,danilov2001discrete,fujishige2003note}, network flow problems~\cite{narayanan1997submodular},
phylogenetics~\cite{Zwiernik} and combinatorics~\cite{Joswig2010tropical,lam2007alcoved}. In these cases, the $i$-th coordinate of a vector $x$ may represent the unit price of item $i$ or the potential at node $i$. Since for such applications only the coordinate differences $x_i-x_j$ matter, $L^\natural$-concavity is the appropriate notion to consider. Another example is the case of max-linear graphical models~\cite{wang2011conditional, gissibl2018max,kluppelberg2019bayesian}, where the variables of interest are defined by recursive max-linear equations and are invariant under scaling by constants. The distribution of their logarithms is invariant under translations, in other words, they are LLC. Finally, when the points have integer coordinates, $L^\natural$-concavity is equivalent to discrete concavity \cite[\S 1]{murota2003discrete}. This notion
is important in combinatorial optimization \cite{murota2003discrete,murota2009recent}.
 
Our aim is to study the MLE in two settings: log-concavity combined with $\mtp$ and log-concavity combined with LLC. In other words, we study properties of the solutions to the optimization problems
\begin{align}\label{eq:optproblem}
\text{maximize} & \quad\quad 
w_1 \log f(x_1) + \cdots + w_n \log f(x_n) \\
\text{such that} &\quad\quad f \text{ is a log-concave and MTP}_2 \text{ density,}\notag
\end{align}
and
\begin{align}\label{eq:optproblem.ell}
\text{maximize} & \quad\quad
w_1 \log f(x_1) + \cdots + w_n \log f(x_n) \\
\text{such that} &\quad\quad f \text{ is a log-concave and LLC density.}\notag
\end{align}

\noindent  \textbf{Results.}  Our main result, Theorem \ref{thm_ex_unique}, concerns the existence and uniqueness of the MLE almost surely, and the minimum number of samples needed for this to happen. 
If the support of the original underlying density is full-dimensional, that number is
three for~$\mtp$, and two for LLC. This does not depend on the dimension $d$. 
In addition, we show that the MLE is unique and consistent. The proof is given in Appendix~\ref{app_1}.
It builds on work of Royset and Wets~\cite{royset2017constrained} which provides a general framework for proving existence and consistency of the MLE in shape constrained density estimation.
  A direct application of~\cite{royset2017constrained} to log-concave $\mtp$ or LLC densities would require an extra technical assumption (see \cite[Proposition 4.6]{royset2017constrained}). Our theorem requires no such assumption, and computes the smallest possible number of samples needed for the MLE to exist.

\begin{theorem}[Existence, uniqueness, and consistency of the MLE]\label{thm_ex_unique}
Let $X_1, \dots, X_n$ be i.i.d samples from a continuous distribution with density $f_0$. The following hold with probability one:
\begin{itemize}
	\item If $n \geq 3$, the $\mtp$ log-concave MLE exists and is unique almost everywhere. 
	\item If $n \geq 2$, the LLC log-concave MLE exists and is unique almost everywhere. 
\end{itemize}
In both cases, when the MLE $\hat{f}_n$ exists, it is consistent in the sense that
 $\hat{f}_n$ converges almost surely to $f_0$ in the Attouch-Wets metric of \cite{royset2017constrained}.\end{theorem}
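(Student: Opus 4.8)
\medskip
\noindent\textbf{Proof proposal.}
I would split the statement into four tasks: (a) the weighted log-likelihood $\ell(f)=\sum_i w_i\log f(X_i)$ is bounded above on the class of log-concave MTP$_2$ (resp.\ LLC) densities; (b) the supremum is attained; (c) the maximizer is unique up to a Lebesgue-null set; (d) consistency. The strategy is to feed (b) and (d) into the general existence and consistency machinery of Royset--Wets~\cite{royset2017constrained}, together with the Attouch--Wets topology it uses; the substance of the theorem then lies in (a), in pinning down the sharp sample count it demands, and in the purely analytic uniqueness argument (c). Crucially, establishing (a) directly is exactly what lets one invoke~\cite{royset2017constrained} \emph{without} the extra hypothesis of its Proposition~4.6.

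For (a), note first that the support of a log-concave MTP$_2$ density $f$ is a \emph{convex sublattice} of $\R^d$: convex because $\log f$ is concave, and closed under $\wedge,\vee$ because $f(x)f(y)>0$ forces $f(x\wedge y)f(x\vee y)\ge f(x)f(y)>0$. The log-likelihood fails to be bounded above precisely when a sequence of such densities can send its mass to infinity by concentrating onto a \emph{lower-dimensional} convex sublattice $Q$ containing $X_1,\dots,X_n$, and whether this can happen is controlled by the shape of $Q$ --- essentially by whether the orthogonal projection onto the direction space of $Q$ has nonnegative off-diagonal entries, which is the condition under which an MTP$_2$ density can be forced to concentrate there. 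The heart of the proof is to show that for $n\ge 3$ samples from a density with full-dimensional support there is, almost surely, no such escape route: in moderate dimension this is immediate because the smallest convex sublattice containing the samples is already full-dimensional, and in general one can use the MTP$_2$ inequalities to reduce $\prod_i f(X_i)$ to a product over the chain formed by the coordinatewise minimum, median and maximum of three of the samples, and then rule out concentration along the relevant sublattice from general position. For $n=2$, by contrast, the two samples always lie in the convex sublattice $\conv\{X_1,X_2,X_1\wedge X_2,X_1\vee X_2\}$ --- a parallelogram of dimension at most $2$ whose direction space does admit an MTP$_2$-compatible concentration --- so the MLE need not exist when $d\ge 3$; and already when $d=2$ the two samples are comparable with positive probability, spanning only a segment. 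Hence $n\ge 3$ is sharp. The LLC case is parallel, with ``convex sublattice'' replaced by ``$L^\natural$-convex set''; there $n\ge 2$ suffices and is sharp, a single point being $L^\natural$-convex.

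For uniqueness (c), suppose $f_1,f_2$ are both maximizers and put $g_i=\log f_i$. Then $g_t:=tg_1+(1-t)g_2$ is again concave and supermodular (resp.\ $L^\natural$-concave), since both properties are preserved under nonnegative combinations, and H\"older's inequality gives $c_t:=\int e^{g_t}\le\bigl(\int e^{g_1}\bigr)^t\bigl(\int e^{g_2}\bigr)^{1-t}=1$. The normalized density $e^{g_t}/c_t$ lies in the class and has log-likelihood $t\,\ell(f_1)+(1-t)\,\ell(f_2)-\log c_t=\ell^\ast-\log c_t$, so optimality of $\ell^\ast$ forces $c_t=1$; the equality case of H\"older then yields $e^{g_1}\propto e^{g_2}$ almost everywhere, and since both densities integrate to one we conclude $f_1=f_2$ a.e.

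Finally, for consistency (d) one checks the hypotheses of the consistency theorem of~\cite{royset2017constrained} --- closedness of the class in the relevant topology, non-degeneracy of the supports of near-maximizers (a consequence of the non-escape property above), and the uniform upper bounds from (a), together with the usual epi-convergence of empirical to population log-likelihoods --- to obtain $\hat f_n\to f_0$ almost surely in the Attouch--Wets metric. I expect the main obstacle to be task (a): proving, in a way that does not degrade as $d$ grows, that for $n\ge 3$ no log-concave MTP$_2$ density sequence can escape by concentrating onto a lower-dimensional convex sublattice through the samples. This non-escape statement is precisely the property that replaces the technical assumption of~\cite[Proposition~4.6]{royset2017constrained} and identifies the minimal number of observations.
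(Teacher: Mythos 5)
Your overall architecture matches the paper's: restrict to a subclass with a uniform bound on $\sup f$, invoke the Royset--Wets existence/consistency machinery on a compact support, and prove uniqueness by normalizing a mean of two maximizers and using H\"older (the paper uses the geometric mean and Cauchy--Schwarz; your $t$-interpolation is the same argument). Parts (b), (c), (d) are therefore fine in outline. The genuine gap is in (a), which is exactly where the content of the theorem lies. First, your dichotomy ``immediate in moderate dimension / chain argument in general'' is misplaced: the a.s.\ full-dimensionality of the smallest min-max closed convex set containing three generic samples holds in \emph{every} dimension $d$, but it is not ``immediate''---the paper proves it (Lemmas in Appendix B) by showing that a subspace of dimension $\ge 2$ not contained in a bimonotone hyperplane contains incomparable $u,v$ with $u\wedge v, u\vee v$ outside it, so the iterated sets $C^{(i+1)}=\conv\bigl(C^{(i)}\cup(C^{(i)}\wedge C^{(i)})\cup(C^{(i)}\vee C^{(i)})\bigr)$ gain a dimension at each step until full. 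Second, and more importantly, full-dimensionality of the support (or of $\MMconv(X)$) does \emph{not} by itself bound the likelihood when $n<d+1$: a log-concave density with full-dimensional support can still place a sharp ridge along the two-dimensional $\conv(X)$ and make $\sum_i w_i\log f(X_i)$ arbitrarily large. What rules this out is a quantitative propagation specific to $\mtp$: with $m=\min_i \log f(X_i)$ and $M=\sup \log f$, the $\mtp$ inequality gives $\log f \ge 2^i m-(2^i-1)M$ on $C^{(i)}$, and since $C^{(d-2)}$ is a.s.\ full-dimensional, concavity plus the peak value $M$ forces a lower bound on $\int e^{\log f}$ that is incompatible with being a density unless $m$ is extremely negative, so the likelihood tends to $-\infty$ as $M\to\infty$ (the paper's Lemma B.5, which is what replaces the equi-usc hypothesis of Proposition 4.6 in Royset--Wets).

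Your proposed mechanism does not supply this. Reducing $\prod_i f(X_i)$ to a product over the chain formed by the coordinatewise minimum, median and maximum goes in the wrong direction: it gives an \emph{upper} bound on the sample likelihood in terms of values on a low-dimensional chain, whereas what is needed is a \emph{lower} bound on $f$ over a full-dimensional region, derived from the sample values together with the peak, in order to trigger the integrability constraint. Likewise, the heuristic about ``the orthogonal projection onto the direction space of $Q$ having nonnegative off-diagonal entries'' is not an argument one can run. Until the propagation lemma (or an equivalent uniform bound $\sup f\le M_T$ for all densities whose likelihood exceeds that of the uniform density on $\MMconv(X)$, and its LLC analogue with the sets $D^{(i)}$) is proved, the existence claim, the sharp sample counts $n\ge 3$ and $n\ge 2$, and the applicability of the Royset--Wets theorems all remain unestablished. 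The sharpness discussion for $n=2$ is tangential to the stated theorem and can be dropped.
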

 
 \begin{remark}The Attouch-Wets metric, also known as the hypo-distance metric, was defined in \cite[Section 3.1]{royset2017constrained} to metrize convergence of hypographs of density functions. As shown in \cite[Proposition 3.2, 3.3]{royset2017constrained}, under certain assumptions which apply to a large class of functions, convergence under the Attouch-Wets metric implies convergence under the Hellinger and $L^2$ distances.
 \end{remark}

The other results of our paper concern the shape and computation of the MLE. We describe the support of the MLE and give algorithms for computing it (cf. Section~\ref{sec_2}). We provide conditions on the samples $X$ which ensure that the MLE under $\mtp$ and LLC, respectively, can be computed by solving a convex optimization problem. Under these conditions, which we call {\em tidy}, the $\mtp$ and 
LLC MLEs behave like the log-concave MLE. They are piecewise-linear and can be computed by solving a finite-dimensional convex optimization problem (cf. Theorem \ref{thm:tidy_solution}). For $\mtp$, being tidy includes $X\subset \RR^2$ or $X\subseteq \{0,1\}^d$, and for LLC, it includes $X \subset \mathbb{Q}^d$. Since numerical computations are usually performed in $\mathbb{Q}$ by rounding real numbers, the LLC MLE can always be computed 
numerically using the optimization problem in Theorem \ref{thm:tidy_solution}. Building on the subgradient descent algorithm for computing the log-concave MLE  in~\cite{CSS}, we propose a conditional gradient method for computing the MTP$_2$ log-concave MLE for tidy configurations.

For non-tidy configurations, we conjecture that the MLE is piecewise linear as well, and
can be computed via the finite dimensional convex program used for tidy configurations (cf. Conjectures \ref{conj_main} and \ref{conj_simple}).
 As steps towards proving these conjectures, we show that a tent function is concave and $\mtp$ if and only if it induces a bimonotone subdivision. This result (cf. Theorem~\ref{thm:BimonotoneSubdivision}) is the analogue of \cite[Theorem 7.45]{murota2003discrete} for the LLC case. Finally, Section~\ref{sec_5} raises
 research  questions in geometric combinatorics and discrete convex analysis that are of independent interest. 
 
\medskip

\textbf{Organization.} Our paper is organized as follows. Section~\ref{sec_2}  characterizes the support of the MLE under $\mtp$ and LLC respectively,
and it offers algorithms for computing these. 
Sections~\ref{sec_3} develops the convex optimization problem associated to the tidy case, and Section~\ref{sec_4} develops algorithms for solving this optimization problem. Section~\ref{sec_5} analyzes the estimation problem in the general case. We conclude with a short discussion in Section~\ref{sec_6}. 

\section{Support of the MLE}
\label{sec_2}

According to~\cite{CSS}, the log-concave MLE exists with probability one if there are at least $d+1$ samples, and its support is the convex polytope $\conv(X)$. The support of the log-concave and MTP$_2$/LLC estimates always contains $\conv(X)$, but is in general larger. In this section we develop the relevant geometric theory to compute it.

We begin with the log-concave MTP$_2$ case. In the course of the proof of Theorem~\ref{thm_ex_unique} (see Appendix~\ref{app_1}), we show that if an MLE exists,
 then its support is the {\em min-max convex hull} of the samples $x_1,\ldots,x_n$ in $\RR^d$.

\begin{definition} 
A subset of $\mathbb R^d$ is {\em min-max closed} if, for any two elements $u$ and $v$ in the set, the vectors $u\wedge v$ and $u\vee v$ are also in the set. For a finite set $X \subset \R^d$, its {\em min-max closure} $\overline{X}$ is the smallest min-max closed set that contains $X$. Its {\em min-max convex hull} MMconv$(X)$ is the smallest min-max closed and convex set containing $X$. In discrete geometry~\cite{felsner2011distributive},
min-max closed convex sets are also known as {\em distributive polyhedra}.
\end{definition}

For a finite subset $X$ of $\RR^d$, 
the min-max closure $\overline{X}$ can be computed by adding points iteratively. That is, 
we set $X^{(0)} = X$, and we define
\begin{equation}
\label{eq:Xicompletion}
 X^{(i)} \,=\, \{u \wedge v\,: \,u,v\in X^{(i-1)}\}\,\cup \, \{u \vee v\,:\, u,v\in X^{(i-1)} \} \;\;\textrm{ for } i\geq 1.
 \end{equation}
 Since the $j$-th coordinate of each point in $X^{(i)}$  is among the $j$-th coordinates of the points in $X$, 
 equation (\ref{eq:Xicompletion}) defines an increasing nested sequence of sets 
 which stabilizes in finitely many steps, and the final set is $\overline{X}$. Note that to compute $\overline{X}$ we use a more efficient approach, described in Algorithm~\ref{alg:min-max_closure}.
 
 Suppose that  $X \subset \RR^d$ is finite. Then
  ${\rm MMconv}(X)$ is a convex polytope. At first, one might think that 
     ${\rm MMconv}(X) = \conv(\overline X)$. But this is true only when the dimension $d$ equals $2$.
For $d\geq 3$, MMconv$(X)$ is generally larger
than $\conv(\overline X)$. We refer to Example~\ref{ex:17} and \cite[Example 17]{QT}.

 The {\em 2-D Projections Theorem} below gives the linear inequality representation of 
 the polytope $\MMconv(X)$. This result was published in the 1970's by Baker and Pickley \cite{BP} and Topkis \cite{Topkis}, but they attribute it to George Bergman, who discovered it in the context of universal algebra. This theorem was extended to a characterization of min-max closed polytopes by Queyranne and Tardella \cite{QT}. For $i,j \in \{1, \dots, d\},i \neq j$, let $\pi_{ij}:\mathbb R^d\to\mathbb R^2$ denote the projection map onto the $i$-th and $j$-th coordinate.

\begin{theorem}[2-D Projections Theorem] \label{2DProjectionsTheorem}
\!\!For any finite subset  $X\!\subset \!\mathbb R^d$, 
$$ {\rm MMconv}(X) \,\,\,= \,\,\, \bigcap_{i\neq j}  \,
\pi_{ij}^{-1}\bigl( \,{\rm conv} \bigl(\, \pi_{ij} ({\overline X})\, \bigr) \,\bigr).
$$
\end{theorem}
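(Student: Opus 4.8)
Write $P:=\bigcap_{i\neq j}\pi_{ij}^{-1}\bigl(\conv(\pi_{ij}(\overline X))\bigr)$ for the right-hand side and prove the two inclusions $\MMconv(X)\subseteq P$ and $P\subseteq\MMconv(X)$ in turn. Two facts will be used repeatedly: a coordinate projection commutes with $\wedge$ and $\vee$, so $\pi_{ij}(\overline X)$ is again finite and min-max closed; and $\MMconv(X)$, being a min-max closed convex set containing $X$, contains $\overline X$.

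To get $\MMconv(X)\subseteq P$ I would show that $P$ is itself a min-max closed convex set containing $X$; since $\MMconv(X)$ is by definition the smallest such set, this gives the inclusion. Convexity of $P$ (an intersection of preimages of convex sets under linear maps) and $X\subseteq P$ are immediate, so the point is that $P$ is min-max closed, and for this I would first establish the planar lemma: \emph{the convex hull of a finite min-max closed set $S\subset\RR^2$ is min-max closed.} My proof: given $u,v\in\conv(S)$, the only case not already giving $u\wedge v\in\{u,v\}$ is, after permuting the two coordinates, $u_1<v_1$ and $u_2>v_2$, so $w:=u\wedge v=(u_1,v_2)$, and the $\vee$-statement follows by applying the $\wedge$-statement to $-S$; writing $u=\sum_k\lambda_k s_k$ and $v=\sum_\ell\mu_\ell t_\ell$ as convex combinations of points of $S$, the point $z:=\sum_{k,\ell}\lambda_k\mu_\ell\,(s_k\wedge t_\ell)$ lies in $\conv(S)$ since $S$ is min-max closed, satisfies $z\le w$ coordinatewise, and then a short computation exhibits $w$ as an explicit convex combination of $z$, $u$ and $v$, so $w\in\conv(S)$. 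Granting this, $P$ is min-max closed because for $p,q\in P$ and each $i\neq j$ we have $\pi_{ij}(p\wedge q)=\pi_{ij}(p)\wedge\pi_{ij}(q)\in\conv(\pi_{ij}(\overline X))$, a min-max closed polygon by the lemma, and likewise for $p\vee q$.

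For the harder inclusion $P\subseteq\MMconv(X)$ I would argue via half-spaces. First: a half-space $H=\{x:\langle c,x\rangle\le b\}$ in $\RR^d$ is min-max closed iff $c$ has at most one positive and at most one negative coordinate, i.e., $c=\alpha e_i-\beta e_j$ with $\alpha,\beta\ge0$ (here $e_i$ is the $i$-th coordinate vector); one direction is a short case analysis and the other follows because, if $c$ had two positive coordinates $c_i,c_j$, then sliding a boundary point in the $+e_i$ direction while compensating in coordinate $j$ to stay on the boundary produces two points of $H$ whose join leaves $H$, and symmetrically for two negative coordinates. Second: if $H=\{\alpha x_i-\beta x_j\le b\}$ is such a half-space and $X\subseteq H$, then $\overline X\subseteq H$, hence $\conv(\pi_{ij}(\overline X))\subseteq\{\alpha y_i-\beta y_j\le b\}$ by convexity, hence every $p\in P$ satisfies $\alpha p_i-\beta p_j\le b$, i.e., $p\in H$. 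So $P$ lies in every min-max closed half-space containing $X$, and it remains to show that $\MMconv(X)$ equals the intersection of all such half-spaces --- equivalently, since $\MMconv(X)$ is a polytope, that each of its facets is cut out by an inequality $\alpha x_i-\beta x_j\le b$ with $\alpha,\beta\ge0$. I would obtain this from the Queyranne--Tardella characterization of min-max closed polytopes \cite{QT}, or reprove it by induction on $d$: the planar lemma is the case $d=2$ (a min-max closed polygon has nondecreasing lower and upper boundary, so its edges are vertical or have nonnegative slope), and for $d\ge3$ I would slice $\MMconv(X)$ by the hyperplanes $\{x_k=c\}$ for the finitely many values $c$ attained by the $k$-th coordinate on $X$ and keep track of the facets thereby produced.

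I expect this last structural step to be the main obstacle. Everything through $\MMconv(X)\subseteq P$, the planar lemma, and the half-space reduction is elementary; but ruling out, for $d\ge3$, facet normals of $\MMconv(X)$ with two positive (or two negative) entries genuinely uses the distributive-lattice geometry of $\overline X$, and this is the substantive content of the classical Bergman result and of its extension by Queyranne and Tardella.
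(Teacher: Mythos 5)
First, note that the paper itself gives no proof of Theorem~\ref{2DProjectionsTheorem}: it is quoted as a classical result (attributed to Bergman, published by Baker--Pixley and Topkis, extended by Queyranne--Tardella \cite{QT}), so there is no internal argument to match yours against, and your attempt has to stand on its own. The parts you actually carry out are correct. The planar lemma is fine: with $z=\sum_{k,\ell}\lambda_k\mu_\ell\,(s_k\wedge t_\ell)$ one has $z\le u\wedge v$ coordinatewise, and since $u_1<v_1$, $u_2>v_2$ the system $u\wedge v=az+bu+cv$ does admit a nonnegative solution with $a+b+c=1$, so $\conv(S)$ is min-max closed; consequently $P$ is a min-max closed convex set containing $X$ and $\MMconv(X)\subseteq P$ is complete. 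The half-space lemma (a half-space is min-max closed iff its normal is bimonotone) and the deduction that $P$ lies in every bimonotone half-space containing $X$ are also sound.

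The genuine gap is exactly the step you flag: you still need that $\MMconv(X)$ \emph{equals} the intersection of the bimonotone half-spaces containing $X$, i.e.\ that a min-max closed polytope is cut out by bimonotone inequalities. That is the substantive content of the theorem; within the paper it appears as Corollary~\ref{cor:bimonotone}, which the paper \emph{derives from} Theorem~\ref{2DProjectionsTheorem}, so quoting it here would be circular in the paper's logical order (citing \cite{QT} directly is mathematically legitimate, since their proof is independent, but then your argument is a reduction of one classical statement to an equivalent one rather than a proof). Your fallback sketch --- induction on $d$ by slicing with the hyperplanes $\{x_k=c\}$ --- does not work as stated: knowing the slices are bimonotone polygons/polytopes does not control the normal vector of a full facet of $\MMconv(X)$ that crosses several slabs, which is precisely what must be ruled out. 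A direct argument in the spirit of Topkis closes it in the full-dimensional case: if a facet $F$ had outer normal $c$ with $c_i,c_j>0$, take $x$ in the relative interior of $F$ and perturb along $v=e_i-(c_i/c_j)e_j$ (which is parallel to the facet's hyperplane); then $x$ and $x+tv$ lie in $\MMconv(X)$ for small $t>0$, but their join $x+te_i$ violates the facet inequality, contradicting min-max closedness --- with some extra care needed when $\MMconv(X)$ is not full-dimensional, where facet inequalities are not unique and one must exhibit a bimonotone choice. Until such an argument is supplied, the inclusion $P\subseteq\MMconv(X)$ is not established.
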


{
	\begin{algorithm}[b!]
		\label{alg:QT}
		\caption{Computing  the min-max convex hull}
		\LinesNumbered
		\DontPrintSemicolon
		\SetAlgoLined
		\SetKwInOut{Input}{Input}
		\SetKwInOut{Output}{Output}
		\Input{A finite set of points $X$ in $\RR^d$.}
		\Output{The polytope ${\rm MMconv}(X)$ in $\RR^d$.}
		\BlankLine
		\For{each pair of distinct indices $i,j$}
		{Compute the convex hull  of the planar point configuration $\pi_{ij}({X})\cup\{\min(\pi_{ij}(X)),\max(\pi_{ij}(X))\}$;\;
			List the bimonotone inequalities that minimally describe this polygon in $\RR^2$;}	
		Collect all bimonotone inequalities. Their solution set in $\RR^d$ is ${\rm MMconv}(X)$; \;
	\end{algorithm}
}

Theorem \ref{2DProjectionsTheorem} reduces the computation of min-max convex hulls to the case $d=2$, and yields Algorithm~\ref{alg:QT} for computing MMconv$(X)$ for a finite set $X$. Note that to compute MMconv$(X)$ for a set of points $X\subseteq\mathbb R^2$, we can use the simple fact that MMconv$(X) = $ conv$(X\cup \{\min(X), \max(X)\})$, where $\min(X)$ and $\max(X)$ are the coordinatewise minimum and maximum of $X$. Therefore, Algorithm~\ref{alg:QT} boils down to computing $\binom d2$ convex hulls of two-dimensional sets of $n+2$ points each. So, its complexity is $ O(\binom d2 n\log n)$.

Note that each edge of 
 a min-max closed polygon
 in $\RR^2$ has non-negative slope, so the line it spans has the form
$\{a_1 z_1 + a_2 z_2 = b\}$ where $a_1 a_2 \leq 0$.
Extending this to higher dimensions $d \geq 2$, we say that a linear inequality on $\RR^d$ is 
{\em bimonotone} if it has the form $\, a_i z_i + a_j z_j \geq b$, where $a_ia_j \leq 0$.
Theorem~\ref{2DProjectionsTheorem} implies the following representation for
min-max closed polytopes.

\begin{corollary}[Theorem 5 in \cite{QT}] \label{cor:bimonotone} A polytope $P$ in $\mathbb R^d$ is 
min-max closed if and only if $P$ is defined by a set of
 bimonotone linear inequalities.
\end{corollary}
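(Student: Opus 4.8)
The plan is to establish the two implications separately. The direction ``$P$ bimonotone $\Rightarrow$ $P$ min-max closed'' is an elementary lattice computation, while the converse is where the content lies and will reduce, via Theorem~\ref{2DProjectionsTheorem}, to the two-dimensional picture. For the easy direction I would first note that an intersection of min-max closed sets is again min-max closed, so it suffices to check that a single bimonotone halfspace $H=\{z\in\RR^d:\alpha z_i+\beta z_j\geq b\}$ with $\alpha\beta\leq 0$ is min-max closed. After possibly swapping $i$ and $j$ one may assume $\alpha\geq 0\geq\beta$; the case where one coefficient vanishes is immediate. Given $u,v\in H$, one expands $\alpha(u\wedge v)_i+\beta(u\wedge v)_j$, uses that multiplication by $\alpha\geq 0$ turns coordinatewise minima into minima and multiplication by $\beta\leq 0$ turns them into maxima, and bounds the result below by $\min\{\alpha u_i+\beta u_j,\ \alpha v_i+\beta v_j\}\geq b$; the symmetric bookkeeping, using $\alpha v_i\geq\alpha u_i$, handles $u\vee v$. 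This is two or three lines and I expect no difficulty.

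For the converse, let $P$ be a min-max closed polytope and let $V$ be its (finite) vertex set. Since $P$ is a convex, min-max closed set containing $V$ we get $\MMconv(V)\subseteq P$, while $P=\conv(V)\subseteq\MMconv(V)$ because $\MMconv(V)$ is convex and contains $V$; hence $P=\MMconv(V)$. Now Theorem~\ref{2DProjectionsTheorem} applies and yields $P=\bigcap_{i\neq j}\pi_{ij}^{-1}\bigl(\conv(\pi_{ij}(\overline{V}))\bigr)$. For each pair $i\neq j$ the set $\pi_{ij}(\overline{V})$ is a finite min-max closed subset of $\RR^2$, since coordinate projection commutes with $\wedge$ and $\vee$; consequently $\conv(\pi_{ij}(\overline{V}))$ is a min-max closed polygon in the plane (in $\RR^2$ one has $\MMconv(Y)=\conv(Y\cup\{\min Y,\max Y\})$, and $\min Y,\max Y$ already lie in the min-max closed set $Y$). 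By the remark preceding the corollary, every edge of this polygon has nonnegative (or vertical/horizontal, i.e.\ infinite or zero) slope, so the polygon is cut out by finitely many inequalities $\alpha z_1+\beta z_2\geq b$ with $\alpha\beta\leq 0$. Pulling each of these back through $\pi_{ij}$ gives a bimonotone inequality $\alpha z_i+\beta z_j\geq b$ on $\RR^d$, and collecting them over all $\binom{d}{2}$ pairs exhibits $P$ as the solution set of finitely many bimonotone inequalities.

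The only genuinely nontrivial ingredient is the 2-D Projections Theorem feeding the converse direction; once that is granted, the remaining steps — identifying $P$ with $\MMconv$ of its vertex set, recording the planar fact that edges of a min-max closed polygon have nonnegative slope, and transporting inequalities back along the coordinate projections — are routine. So I do not anticipate a further obstacle beyond a clean invocation of Theorem~\ref{2DProjectionsTheorem} and the elementary sign analysis in the easy direction.
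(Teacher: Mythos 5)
Your proposal is correct, and it follows exactly the route the paper indicates: the paper gives no independent proof of Corollary~\ref{cor:bimonotone}, citing \cite{QT} and noting that it is implied by Theorem~\ref{2DProjectionsTheorem}, which is precisely how your converse direction proceeds (identify $P$ with $\MMconv$ of its vertex set, project, use that planar min-max closed polygons have edges of nonnegative slope, and pull the bimonotone inequalities back), while your easy direction is the standard sign computation for a single bimonotone halfspace. The only cosmetic gap is that you do not spell out the degenerate planar cases (when $\conv(\pi_{ij}(\overline V))$ is a segment or a point), but these are handled by the same slope argument plus single-variable bounds, which are themselves bimonotone.
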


\begin{example} \label{ex:17} \rm 
Set $X = \{(0,0,0), (6,0,0), (6,4,0), (8,4,2)\}\subset\mathbb R^3$. 
This set is a chain in the coordinatewise 
partial order on $\mathbb R^3$, thus $\overline X = X$. Hence,
${\rm conv}(X) = {\rm conv}(\overline X)$ is a tetrahedron
with the four given vertices. 
This tetrahedron is not min-max closed: $(6,4,0)$ and $(6,3,3/2)$ are both  in ${\rm conv}(\overline X)$,
but their maximum is the
point $(6,4,3/2)$ which is not in ${\rm conv}(\overline X)$. In fact, 
$$ {\rm MMconv}(X) \,\, =\,\, {\rm conv}\biggl(X\cup \bigl\{ \,\bigl( \,6,4,\frac32 \,\bigr)\,  \big\} \biggr). $$
The bimonotone inequalities that describe $\MMconv(X)$ are 
$$z \geq 0, \quad y-2z\geq 0,\quad x-4z\geq 0,\quad 2x-3y\geq 0, \quad y-4\leq 0,\quad x-z\leq 6.$$
These halfspaces furnish the description of MMconv$(X)$  in Theorem~\ref{2DProjectionsTheorem}.
This polytope is a bipyramid,  discussed  in more detail in Example \ref{ex:3d_main}.
 \qed
\end{example}

We now turn to log-concave LLC distributions. Here the support of the MLE is generally larger than ${\rm MMconv}(X)$. In the proof of Theorem~\ref{thm_ex_unique}
we show that, if an MLE exists, its domain is given by the $L^\natural$-extension~of~$X$.

\begin{definition}\label{defn:l.extension}
Let $X \subset \R^d$ be a finite set. We say that $X$ is \emph{$L^\natural$-closed} if there exists some $r > 0$ such that 
\begin{equation}\label{eqn:llc.closed}
x,y \in X \,\,\,\text{implies}\,\,\, (x + \alpha \cdot r\mathbf{1}) \wedge y,\, x \vee (y- \alpha \cdot r\mathbf{1}) \in X
\quad \text{for all $\alpha \in \mathbb{Z}_+$.}
\end{equation} 
 We say that $X$ is \emph{$L^\natural$-convex} if 
$$
\quad x,y \in X \,\,\,\text{implies}\,\,\, (x + \alpha \cdot \mathbf{1}) \wedge y, \, x \vee (y-  \alpha \cdot \mathbf{1}) \in X 
\quad \text{for all $\alpha \in \mathbb{R}_+$.}
$$
The \emph{discrete $L^\natural$-extension} of $X$, denoted $\tilde{X}$, is the smallest finite 
subset of $\RR^d$ that is $L^\natural$-closed  and contains
$X$. The (continuous) \emph{$L^\natural$-extension} of $X$, denoted $L(X)$, is the smallest $L^\natural$-convex set  in $\RR^d$ containing $X$.
\end{definition}

Unlike the $\mtp$ case, a finite set $X$ may not have a discrete $L^\natural$-extension. However, it always has a continuous $L^\natural$-extension $L(X)$. This is a polytope in $\RR^d$.
The following proposition gives the inequality description for $L(X)$,
and it characterizes when $\tilde{X}$ exists. The proof is given in Appendix~\ref{app_support}.

\begin{proposition}\label{prop:llc-extension}
Let $X \subset \R^d$ be a finite set. Then its $L^\natural$-extension is
\begin{equation}\label{eqn:px}
L(X) \,=\, \bigl\{y \in \mathbb{R}^d: y_i - y_j \leq \max_{z \in X} (z_i - z_j), 
\,\,\, \min_{z \in X} z_i \leq y_i \leq \max_{z \in X} z_i \bigr\}. 
\end{equation}
The set $X$ admits a discrete $L^\natural$-extension $\tilde{X}$ if and only if 
$r(X-v) \subset \mathbb{Z}^d$ for some $r > 0$ and $v \in X$.
In this case, there is a unique smallest constant $r^\ast > 0$, independent of the choice of $v$, such that 
$r(X-v) \subset \mathbb{Z}^d$, and
\begin{equation}\label{eqn:tilde.x}
\tilde{X} \,\,=\,\, v+1/r^\ast\cdot \biggl(L(r^\ast(X-v)) \cap \mathbb{Z}^d \biggr).
\end{equation}
If $X$ has a discrete $L^\natural$-extension $\tilde{X}$, then $L(X) = L(\tilde{X}) = \conv(\tilde{X})$. 
\end{proposition}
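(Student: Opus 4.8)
The plan is to establish the three assertions in turn: the inequality description (\ref{eqn:px}) of $L(X)$; the existence criterion for $\tilde X$ together with the formula (\ref{eqn:tilde.x}); and the chain $L(X)=L(\tilde X)=\conv(\tilde X)$. The engine throughout is the polyhedral theory of $L^\natural$-convex sets due to Murota: in $\RR^d$ these are exactly the ``difference-plus-box'' polyhedra $\{y : y_i-y_j\le c_{ij},\ a_i\le y_i\le b_i\}$, and over $\Z^d$ an $L^\natural$-convex set is the set of lattice points of such a polyhedron and is \emph{hole-free}, i.e.\ $\conv(S)\cap\Z^d=S$ and $\conv(S)$ is again $L^\natural$-convex. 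I also use two soft observations: $L(\cdot)$ is monotone, idempotent, and equivariant under $y\mapsto ay+b$ with $a>0$; and the constraint matrix of a difference-plus-box system is the incidence matrix of a digraph, hence totally unimodular, so when the data $c_{ij},a_i,b_i$ are integers every vertex of the polyhedron is integral.

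\emph{The description of $L(X)$.} Write $Q$ for the right-hand side of (\ref{eqn:px}), with $c_{ij}=\max_{x\in X}(x_i-x_j)$, $a_i=\min_{x\in X}x_i$, $b_i=\max_{x\in X}x_i$. First I would check directly that $Q$ is $L^\natural$-convex: for $y,y'\in Q$ and $\alpha\ge0$, the elementary inequality $\min(s_1,s_2)-\min(t_1,t_2)\le\max(s_1-t_1,s_2-t_2)$ applied with $(s_1,s_2)=(y_i+\alpha,y'_i)$ and $(t_1,t_2)=(y_j+\alpha,y'_j)$ yields $\bigl((y+\alpha\mathbf 1)\wedge y'\bigr)_i-\bigl((y+\alpha\mathbf 1)\wedge y'\bigr)_j\le\max(y_i-y_j,y'_i-y'_j)\le c_{ij}$, while $a_i\le\bigl((y+\alpha\mathbf 1)\wedge y'\bigr)_i\le b_i$ is immediate from $\alpha\ge0$; the operation $y\vee(y'-\alpha\mathbf 1)$ is handled the same way with $\max$ in place of $\min$. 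Since $X\subseteq Q$ and $L(X)$ is the smallest $L^\natural$-convex set containing $X$, this gives $L(X)\subseteq Q$. For the reverse inclusion I would use that $L(X)$ is itself a difference-plus-box polyhedron (it is $L^\natural$-convex, and bounded since it lies in $Q$); writing it as $\{y : y_i-y_j\le c'_{ij},\ a'_i\le y_i\le b'_i\}$ and invoking $X\subseteq L(X)$ forces $c'_{ij}\ge c_{ij}$, $a'_i\le a_i$, $b'_i\ge b_i$, so $L(X)\supseteq Q$. Hence $L(X)=Q$.

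\emph{Existence and formula for $\tilde X$.} Unwinding Definition~\ref{defn:l.extension}, a finite $L^\natural$-closed set must lie on a grid $w+r\Z^d$ for some $r>0$, so if $X$ admits a discrete $L^\natural$-extension then $r(X-v)\subseteq\Z^d$ for any $v\in X$. For the converse, fix $v\in X$ and set $M=\Z\langle X-v\rangle$; since the sets $X-v'$ for $v'\in X$ all generate the same lattice, $M$ is independent of $v$, and $\{r>0 : r(X-v)\subseteq\Z^d\}=\{r>0 : rM\subseteq\Z^d\}$ is empty unless every coordinate of every $x-v$ is rational. When it is nonempty it equals $r^\ast\Z_{>0}$ for a unique minimal $r^\ast>0$ (a finitely generated subgroup of $\Q$ is cyclic), and $r^\ast$ depends only on $M$, hence not on $v$. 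After translating by $v$ and scaling by $r^\ast$ we may assume $X\subseteq\Z^d$ and that $1$ is the smallest constant scaling $X$ into $\Z^d$; in particular $X$ is not contained in $k\Z^d$ for any integer $k\ge2$. In this normalization I claim $\tilde X=L(X)\cap\Z^d$: this set is finite (by the previous step $L(X)$ is a bounded polytope), contains $X$, and is $L^\natural$-closed with $r=1$ because $L(X)$ is $L^\natural$-convex; and it is the smallest such, because for any finite $L^\natural$-closed $Y\supseteq X$ the non-divisibility just noted forces the grid of $Y$ to contain $\Z^d$, whence $Y=\conv(Y)\cap(\text{its grid})\supseteq\conv(Y)\cap\Z^d\supseteq L(X)\cap\Z^d$, using that $\conv(Y)$ is $L^\natural$-convex and contains $X$. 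Undoing the normalization gives (\ref{eqn:tilde.x}).

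\emph{The chain of equalities.} By monotonicity and equivariance of $L(\cdot)$, formula (\ref{eqn:tilde.x}) gives $\tilde X\subseteq v+\frac{1}{r^\ast}L(r^\ast(X-v))=L(X)$, hence $L(\tilde X)\subseteq L(L(X))=L(X)$; together with $X\subseteq\tilde X$ this yields $L(X)=L(\tilde X)$. For $L(\tilde X)=\conv(\tilde X)$ I would normalize as above so that $\tilde X=L(X)\cap\Z^d\subseteq\Z^d$: by the first part, $L(\tilde X)=L(X)$ is a difference-plus-box polytope with integer data, so all its vertices are integral and therefore lie in $L(X)\cap\Z^d=\tilde X$; consequently $L(X)=\conv(\text{its vertices})\subseteq\conv(\tilde X)\subseteq L(\tilde X)=L(X)$, forcing $L(\tilde X)=\conv(\tilde X)$, and rescaling back proves $L(X)=L(\tilde X)=\conv(\tilde X)$ in general. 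The one genuinely nontrivial ingredient is the $\supseteq$ direction in the description of $L(X)$ — equivalently, that the $L^\natural$-hull of a finite set is a difference-plus-box polyhedron at all; this, together with the hole-free property invoked in the last two parts, is where I rely on the structure theory of $L^\natural$-convex polyhedra rather than on a direct computation of the orbit of $X$ under the lattice operations. Everything else is lattice bookkeeping and the total-unimodularity argument.
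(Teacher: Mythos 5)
Your argument is correct and travels the same overall route as the paper's proof -- both ultimately rest on Murota's structure theory of $L^\natural$-convex sets from \cite[\S 5.5]{murota2003discrete} -- but several steps are executed differently, and you prove things the paper only asserts. For \eqref{eqn:px} you verify $L^\natural$-convexity of the right-hand side by a direct elementary min/max inequality, where the paper lifts to $\RR^{d+1}$ and quotes the difference-constraint description of $L$-convex polyhedra; for minimality you apply the ``difference-plus-box'' structure theorem to $L(X)$ itself, whereas the paper takes an arbitrary $L^\natural$-convex superset $P$ and argues through its two-dimensional projections. (Both versions lean on the same tacit point: that a set merely closed under the two $L^\natural$-operations of Definition~\ref{defn:l.extension} -- which requires neither convexity nor topological closedness -- already has the polyhedral form; the paper's ``Since $P^\sharp$ is convex'' is the analogous leap, so this is a shared reliance on the intended Murota reading of the definition rather than a defect of your write-up. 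Likewise your ``a finite $L^\natural$-closed set lies on a grid'' is exactly the unwinding the paper performs for the only-if direction.) For the discrete extension, your cyclic-subgroup-of-$\Q$ argument for the existence and $v$-independence of $r^\ast$, and your minimality argument combining the grid of a competitor $Y$ with hole-freeness of discrete $L^\natural$-convex sets, supply the details behind the paper's two terse sentences (``Since $X$ is finite, there is a smallest $r$ \dots so $Y \supseteq X'$''), and they match its normalization strategy of reducing to integer points via \eqref{eqn:tilde.x}. Finally, the chain $L(X)=L(\tilde X)=\conv(\tilde X)$ is not argued at all in the paper's proof; your derivation via monotonicity/equivariance of $L(\cdot)$ plus total unimodularity of the difference-and-box system (integral vertices lie in $\tilde X$) is a clean and complete way to close that gap, and is the main added value of your proposal.
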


\begin{figure}[h]
\centering{\includegraphics[width=0.509\textwidth]{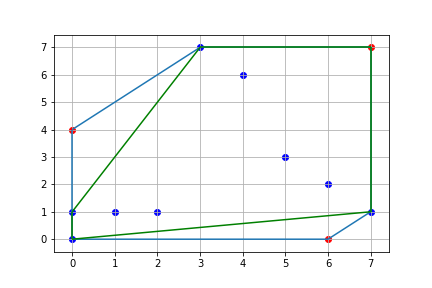}}
\caption{\normalsize An example of a set $X$ (blue points), its $L^\natural$-extension $L(X)$ (the outer hexagon), and its min-max convex hull $\MMconv(X)$ (the inner hexagon). Note that both $L(X)$ and $\MMconv(X)$ contains extra added points (red points).}
\end{figure}

Any finite set of rational points admits a discrete $L^\natural$-extension; so for practical purposes, $\tilde{X}$ always exists. In that case $L(X) = L(\tilde{X}) = \conv(\tilde{X})$, a key difference compared to the general $\mtp$ setting. Computation of $\tilde{X}$ and $L(X)$ is immediate from their definitions. We summarize the main results of this section in the following proposition, which is proven in Appendix~\ref{app_support}.

\begin{proposition}
\label{prop_support} Fix a sample
 $X = \{x_1,\ldots, x_n\}$ in $\mathbb{R}^d$.
If the log-concave $\mtp$ MLE exists, then its support equals the min-max convex hull ${\rm MMconv}(X)$. 
If the LLC MLE exists, then its support equals $L(X)$.
\end{proposition}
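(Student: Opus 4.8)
The plan is to prove each equality by a two-sided inclusion, handling the $\mtp$ and LLC cases in parallel (the argument is also essentially what is needed inside the proof of Theorem~\ref{thm_ex_unique}). Write $C$ for the relevant hull, so $C=\MMconv(X)$ in the $\mtp$ case and $C=L(X)$ in the LLC case, and recall that $C$ is a polytope (Theorem~\ref{2DProjectionsTheorem}, Proposition~\ref{prop:llc-extension}), that $\MMconv(X)$ is by definition the smallest convex min-max closed set containing $X$, and that $L(X)$ is the smallest $L^\natural$-convex set containing $X$ (and such sets are convex, being cut out by linear inequalities). Throughout I would restrict to feasible densities $f$ --- log-concave and $\mtp$, resp.\ log-concave and LLC --- with $\ell(f)=\sum_i w_i\log f(X_i)>-\infty$; such densities exist (for instance Gaussians whose inverse covariance is an $M$-matrix, resp.\ a diagonally dominant $M$-matrix), so whenever the MLE $\hat f_n$ exists it is of this type, and in particular $\hat f_n(X_i)>0$ for every $i$.

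First I would prove $\Supp(\hat f_n)\supseteq C$, in fact that the positivity set $\{f>0\}$ of every feasible $f$ with $\ell(f)>-\infty$ already contains $C$. Log-concavity makes $\{f>0\}$ convex. In the $\mtp$ case it is moreover min-max closed: if $f(u),f(v)>0$ then $f(u\wedge v)f(u\vee v)\ge f(u)f(v)>0$ forces $f(u\wedge v)>0$ and $f(u\vee v)>0$. In the LLC case, applying the $L^\natural$-concavity inequality to $g=\log f$ --- which is bounded above since $f$ is a bounded log-concave density --- shows in the same way that $\{f>0\}$ is $L^\natural$-convex. Since $\{f>0\}$ contains all of $X$, minimality of $C$ yields $\{f>0\}\supseteq C$, hence $\Supp(f)\supseteq C$; now take $f=\hat f_n$.

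Next I would prove $\Supp(\hat f_n)\subseteq C$ by truncation-and-renormalization together with uniqueness. Given feasible $f$ with $\Supp(f)\supseteq C$, put $Z=\int_C f\,dx\le 1$ and $g=\indic_C\, f/Z$. Then $g$ is a density, and it is log-concave because $\log g=\log f-\log Z$ on the convex set $C$ and equals $-\infty$ off $C$. Crucially, $g$ is again feasible: for $u,v\in C$ the lattice combinations occurring in the $\mtp$ inequality (resp.\ the points $(u+\alpha\mathbf 1)\wedge v$ and $u\vee(v-\alpha\mathbf 1)$ occurring in the $L^\natural$-concavity inequality) again lie in $C$, since $C$ is min-max closed (resp.\ $L^\natural$-convex), so the inequality for $g$ reduces to the one already valid for $f$; and any instance of the inequality with an argument outside $C$ holds trivially, a factor of $g$ being zero (resp.\ a summand of $\log g$ being $-\infty$). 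Finally $g(X_i)=f(X_i)/Z\ge f(X_i)$ gives $\ell(g)\ge\ell(f)$, with equality iff $Z=1$. Applying this with $f=\hat f_n$: since $g$ is feasible and $\ell(g)\ge\ell(\hat f_n)=\sup\ell$, the density $g$ also attains the maximum of $\ell$, so by the a.e.\ uniqueness in Theorem~\ref{thm_ex_unique} we get $g=\hat f_n$ a.e., hence $Z=1$, i.e.\ $\hat f_n=0$ a.e.\ off $C$; since $C$ is closed and $\hat f_n$ is log-concave, this forces $\Supp(\hat f_n)\subseteq C$.

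The main obstacle, as I see it, is the feasibility of the truncation $g$ in the second step: it succeeds precisely because $C$ is closed under the operations ($\wedge$, $\vee$, resp.\ the $L^\natural$ combinations) that appear in the defining inequalities, so that no ``mixed'' instance --- one argument in $C$, the other outside --- can be violated; this is exactly the structural input provided by Corollary~\ref{cor:bimonotone} and Proposition~\ref{prop:llc-extension}, and it is the reason the two cases need slightly different bookkeeping. Lesser points requiring care are the behaviour of log-concave densities on the boundary of their support (everything is asserted up to Lebesgue-null sets, consistent with the a.e.\ uniqueness), the positivity of $Z$ (which uses that $C$ is full-dimensional, the only role of the almost-sure clause), and the fact that the explicit polyhedral descriptions of $C$ are not needed for the set equality here, only for turning this support characterization into a computation.
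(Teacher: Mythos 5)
Your proof is correct and takes essentially the same route as the paper's: the inclusion $\Supp(\hat f_n)\supseteq \MMconv(X)$ (resp.\ $L(X)$) via closure properties of the positivity set plus minimality of the hull, and the reverse inclusion via truncation to the hull — the paper merely skips your renormalization by working with the unnormalized functional $\psi_n(f)=\sum_j \log f(x_j)-\int f$, so that discarding mass outside the hull strictly increases the objective unless $Z=1$. One small fix: drop the appeal to the a.e.\ uniqueness of Theorem~\ref{thm_ex_unique} (whose proof in the paper itself invokes this proposition, so that step risks circularity); it is superfluous, since your own observation that $\ell(g)=\ell(\hat f_n)-\log Z$ with $g$ feasible already forces $Z=1$ by maximality of $\hat f_n$.
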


\section{The MLE for tidy configurations}
\label{sec_3}

Fix $X = \{x_1,\ldots, x_n\}\subset\mathbb R^d$.  For a vector $y = (y_1,\ldots, y_n) \in \RR^n$, the \emph{tent function} $h_{X, y}:\mathbb R^d \to\mathbb R\cup\{\-\infty\}$ is the smallest  concave function satisfying $h_{X,y}(x_i) \geq y_i$ for 
all~$i$. This function is piecewise linear, it is greater
 than~$-\infty$ on $\conv(X)$, and equals $-\infty$ outside $\conv(X)$. 
In this section, we provide sufficient conditions for the MLE under log-concavity and  $\mtp$/LLC to equal the exponential of a tent function. 
We show that this is the case when $d=2$ or when $X\subseteq \prod_{i=1}^d\{a_i, b_i\}$ for some $a_i\leq b_i$, $i=1,\ldots, d$, under $\mtp$, and when $X\subset \mathbb{Q}^d$ under LLC. 
If we know that the MLE equals the exponential of a tent function
then we only need to find the heights $y\in\mathbb R^n$. This is a finite-dimensional problem.

\begin{definition}\label{def:supermodular_heights} Given $X$, a vector of heights $y\in\mathbb R^n$ is {\em supermodular} if 
$$y_i+y_j \leq y_k + y_\ell \quad\text{ whenever }\quad x_k = x_i\wedge x_j \, \text{ and }\, x_\ell = x_i\vee x_j.$$
The vector $y\in\mathbb R^n$ is {\em $L^\natural$-concave} if, for all $\alpha > 0$,
\vspace{-0.2cm}
$$y_i+y_j\leq y_k+y_\ell \,\, \text{ whenever }\,\, x_k=(x_i+\alpha\mathbf{1})\wedge x_j\, \text{ and }\, x_\ell = x_i\vee(x_j-\alpha\mathbf{1}).$$
The vector $y\in\mathbb R^n$ is \emph{tight} or {\em concave} if $h_{X,y}(x_i) = y_i$ for all $i \in \{1,\ldots,n\}$.
\end{definition}
In the following, we show that the MLE is a tent function for special configurations $X$ for which supermodularity/$L^\natural$-concavity of the heights $y$ already implies that the tent function $h_{X,y}$ is supermodular/$L^\natural$-concave.

\begin{definition} \rm
Let $X = \{x_1,x_2,\ldots, x_n\}$ be a subset of $\mathbb{R}^d$. We say that $X$ is {\em MTP$_2$-tidy} if, for any tight vector of supermodular heights $y\in\mathbb R^n$, the tent function $h_{X,y}$ is supermodular. We say that $X$ is {\em LLC-tidy} if, for any set of tight $L^\natural$-concave heights $y \in \mathbb{R}^n$, the tent function $h_{X,y}$ is $L^\natural$-concave. 
\end{definition}

Note that $\mtp$-tidy/LLC-tidy implies that $X$ is min-max closed/equals its discrete $L^\natural$-extension. We now state our main result of this section. It is, in general, nontrivial to check whether a set $X$ is tidy. In Theorem~\ref{thm:tidy} we describe several classes of such sets.

\begin{theorem}\label{thm:tidy_solution} If $X$ is $\mtp$-tidy/LLC-tidy, then the optimal solution to the  MLE problem is the exponential of a tent function on $X$.
The tent pole heights $y$ can be found by solving the finite-dimensional convex program
	\begin{equation}
	\label{tidy_opt_problem}
	\text{minimize}\quad - w\cdot y + \int_{\mathbb R^d} exp(h_{X,y}(z))dz
	\quad \quad	\text {subject to}\quad y\in\mathcal S,
	\end{equation}
	where $\mathcal S$ is the set of supermodular heights $y$ on $X$ for the $\mtp$ case, and it is the set of $L^\natural$-concave heights on $X$ for the LLC case.
\end{theorem}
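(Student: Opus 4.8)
The plan is to reduce the infinite-dimensional problems \eqref{eq:optproblem} and \eqref{eq:optproblem.ell} to the finite-dimensional program \eqref{tidy_opt_problem} by showing that, under the tidiness hypothesis, the optimizer must be the exponential of a tent function whose tent-pole heights form a supermodular (resp.\ $L^\natural$-concave) vector. I would carry this out in three stages: (i) any feasible density can be replaced by the exponential of a tent function without decreasing the likelihood; (ii) the tent function obtained this way has heights that are supermodular / $L^\natural$-concave; (iii) conversely, for tidy $X$, every supermodular / $L^\natural$-concave height vector $y$ yields a feasible density $\exp(h_{X,y})$, so the two problems have the same value and the same optimizer.

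For step (i), let $f$ be an optimal log-concave MTP$_2$ (resp.\ LLC) density. Since the objective depends on $f$ only through the values $f(X_i)$, set $y_i = \log f(X_i)$ and replace $f$ by $\exp(h_{X,y})$. Exactly as in the log-concave case of \cite{CSS}, $\log f$ is concave, so $h_{X,y} \leq \log f$ pointwise on $\conv(X)$; hence $\int \exp(h_{X,y}) \leq \int f = 1$, so after rescaling we obtain a feasible density with the same log-likelihood. The one new point relative to \cite{CSS} is that this rescaled tent-exponential must itself be MTP$_2$ (resp.\ LLC) and have the correct support; the support statement is Proposition~\ref{prop_support}, and the shape constraint is exactly what step (iii) and the definition of tidy are designed to deliver. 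For step (ii), observe that $y_i = \log f(X_i)$ inherits supermodularity from the MTP$_2$ inequality $f(x)f(y) \leq f(x\wedge y) f(x\vee y)$ applied at sample points (and from the $L^\natural$-concavity inequality in the LLC case); and $y$ is automatically tight, since replacing $f$ by $\exp(h_{X,y})$ and iterating fixes any slack — more directly, one may first pass to the tent function and note $h_{X,y}(X_i) \geq y_i$ with the optimal $f$ forcing equality by a standard perturbation argument. Thus the optimal heights lie in $\mathcal S$.

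Step (iii) is where tidiness is used and is the crux. By definition, if $X$ is MTP$_2$-tidy then for every tight supermodular $y \in \mathbb R^n$ the tent function $h_{X,y}$ is supermodular on all of $\mathbb R^d$ (equivalently $\conv(X) = \MMconv(X)$, using the remark after the definition of tidy, and the remark that $X$ is then min-max closed); hence $\exp(h_{X,y})$ is MTP$_2$, and being the exponential of a concave piecewise-linear function with support $\conv(X) = \MMconv(X)$, it is also log-concave, so after normalization it is feasible for \eqref{eq:optproblem}. The LLC case is identical with ``supermodular'' replaced by ``$L^\natural$-concave'' and $\MMconv(X)$ by $L(X) = \tilde X$, using Proposition~\ref{prop:llc-extension}. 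Combining (i)–(iii): the supremum of the log-likelihood over feasible densities equals the supremum of $w\cdot y$ over $y \in \mathcal S$ subject to $\int \exp(h_{X,y}) = 1$, and by the usual Lagrangian/rescaling trick this equals the infimum in \eqref{tidy_opt_problem} — concretely, at the optimum of \eqref{tidy_opt_problem} one has $\int \exp(h_{X,y}) = 1$, recovering a genuine density. Convexity of \eqref{tidy_opt_problem} is routine: $\mathcal S$ is a polyhedral cone (finitely many linear inequalities in $y$, noting that for fixed $X$ only finitely many of the $\alpha$-constraints are distinct), $y \mapsto h_{X,y}(z)$ is a pointwise maximum of linear functionals hence convex in $y$, and $t \mapsto e^t$ is convex and increasing, so $y \mapsto \int \exp(h_{X,y}(z))\,dz$ is convex; subtracting the linear term $w\cdot y$ preserves convexity.

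The main obstacle I anticipate is making step (i) fully rigorous — in particular verifying that replacing an arbitrary feasible $f$ by the tent-exponential $\exp(h_{X,y})$ does not enlarge the support beyond $\conv(X)$ in a way that breaks feasibility, and that the optimal $f$ indeed forces tightness $h_{X,y}(X_i) = y_i$. This is precisely the content that is deferred to the proof of Theorem~\ref{thm_ex_unique} and Proposition~\ref{prop_support} (where the support is shown to be $\MMconv(X)$, resp.\ $L(X)$, not merely $\conv(X)$), so a careful write-up must reconcile the two: the optimal tent function lives on $\conv(X)$, but because $X$ is tidy — hence min-max closed, resp.\ equal to its discrete $L^\natural$-extension — one has $\conv(X) = \MMconv(X)$, resp.\ $\conv(X) = \conv(\tilde X) = L(X)$, so there is no conflict. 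The remaining arguments (supermodularity of $\log f$ at sample points, convexity of the program, the rescaling identity) are routine.
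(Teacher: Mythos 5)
Your proposal is correct and follows essentially the same route as the paper's proof: pass from an arbitrary feasible density $f$ to the tent exponential with heights $y_i=\log f(X_i)$, observe these heights are supermodular (resp.\ $L^\natural$-concave) and tight, rescale so the integral equals one, and use tidiness for the converse direction together with the Lagrangian equivalence of \eqref{tidy_opt_problem} with the normalized problem. The only cosmetic difference is that tightness needs no perturbation or optimality argument: it is immediate from $h_{X,y}\leq \log f$ (which you already invoke), exactly as in the paper.
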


\begin{proof}
We here prove the $\mtp$ case.  The LLC case is the same, with `supermodular' replaced by `$L^\natural$-concave'.
Suppose $X$ is $\mtp$-tidy, and let $\mathcal{S}$ be the set of supermodular heights on $X$. Note that 
 $\mathcal S$ is a convex set. Indeed, $\mathcal S$ is defined by the linear inequalities from Definition~\ref{def:supermodular_heights}. Using Lagrange multipliers, one can see that (\ref{tidy_opt_problem}) is equivalent~to 
	\begin{equation}
	\label{tidy_opt_problem.2}
	\text{minimize}\quad - w\cdot y
\quad\,\,\, 	\text {subject to} \quad y\in\mathcal S
\,\,\, {\rm and} \,\, \int_{\mathbb R^d} exp(h_{X,y}(z))dz = 1. 
	\end{equation}
	 Any optimizer $y$ of (\ref{tidy_opt_problem.2}) gives rise to a feasible solution $f = \exp(h_{X,y})$ to (\ref{eq:optproblem}), because
	  (\ref{tidy_opt_problem.2}) minimizes the same objective function over the subset of log-piecewise linear and concave
	  $\mtp$ densities on $X$.

Now, let $f$ be any feasible solution to (\ref{eq:optproblem}). Define $y'_i = {\rm log}(f(x_i))$ for $i=1,2,\ldots,n$. 
We shall exhibit a feasible solution $y$ of (\ref{tidy_opt_problem.2}) that
satisfies $-w \cdot y \leq - w \cdot y'$. This will prove the claim. 
We abbreviate  $g = {\rm exp}(h_{X,y'})$. As $h_{X,y'}$ is defined to be the smallest concave function with the
	given values on $X$, we have $g \leq f$ pointwise, and hence
	$\int_{\RR^d} g(x) dx \leq 1$.
    Let $c = 1/\int_{\RR^d} g(x) dx$ and define $y$ by setting $y_i := y'_i + \log c$
    for all $i$. Since $X$ is tidy and $f$ is $\mtp$, 
    we have $y \in \mathcal{S}$, and since $\log f$ is concave, then $y$ is tight. 
    Furthermore, $\int \exp(h_{X,y}(z)) \, dz = c\int \exp(h_{X,y'}(z)) \, dz = 1$. 
    Therefore, $y$ is a feasible solution of (\ref{tidy_opt_problem.2}), and $ -w \cdot y \leq - w \cdot y'$, as desired. 
\end{proof}

It is hence of interest to characterize the configurations that are $\mtp$-tidy or LLC-tidy. In the $\mtp$ case, a necessary condition is that $X$ is min-max closed, that is, $X = \overline{X}$. We show that in dimension two or when $X$ is binary, this is also sufficient. The proof is provided in Appendix~\ref{app_3}.

\begin{theorem} 
	\label{thm:tidy}
	Let $X = \overline X$ be finite min-max closed subset of $\RR^d $.
	If $d=2$  or if $X\subseteq \prod_{i=1}^d\{a_i,b_i\}$ where $a_i \leq b_i$ for all $i$,
	  then $X$ is MTP$_2$-tidy.
\end{theorem}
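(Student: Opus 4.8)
The plan is to show that whenever $y$ is a tight vector of supermodular heights, the tent function $h=h_{X,y}$ is supermodular on $\R^d$; by definition this is exactly the assertion that $X$ is $\mtp$‑tidy. I would begin with two reductions. First, in both cases $\conv(X)$ is min‑max closed: for $d=2$ because $\conv(X)=\conv(X\cup\{\min X,\max X\})=\MMconv(X)$ and $\min X,\max X\in X$ since $X=\overline X$; for $X\subseteq\prod_i\{a_i,b_i\}$ because, after rescaling coordinates so that $X\subseteq\{0,1\}^d$, the sublattice $X$ has $\conv(X)$ equal to the order polytope of a (pre)poset on $[d]$, cut out by the bimonotone inequalities $x_i\le x_j$ and $0\le x_i\le1$, hence min‑max closed by Corollary~\ref{cor:bimonotone}. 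Thus $u,v\in\conv(X)$ forces $u\wedge v,u\vee v\in\conv(X)$, while if $u\notin\conv(X)$ the supermodularity inequality for $h$ holds trivially (its left side is $-\infty$); so it suffices to verify supermodularity on $\conv(X)$. Second, since $h$ is the smallest concave majorant of the data, every vertex of the regular subdivision $\Sigma$ of $\conv(X)$ induced by $y$ lies in $X$, and by Theorem~\ref{thm:BimonotoneSubdivision} it is enough to prove that $\Sigma$ is a bimonotone subdivision.

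For $d=2$ the plan is to exclude edges of negative slope. Suppose $[X_a,X_b]$ is an edge of $\Sigma$ with $(X_a)_1<(X_b)_1$ and $(X_a)_2>(X_b)_2$. Then $p:=X_a\wedge X_b$ and $q:=X_a\vee X_b$ lie in $X$ (as $X=\overline X$), with $p$ strictly below and $q$ strictly above the line $L$ through the edge. If the edge were a boundary edge of $\conv(X)$ this would contradict $L$ supporting $\conv(X)$; otherwise the edge borders two cells carrying affine pieces $\ell_1,\ell_2$ that agree on $L$, say $\ell_i=\ell_0+c_i n$ with $\ell_0=\ell_i|_L$, $n$ affine and vanishing on $L$, $n(q)>0$, and $c_1\neq c_2$. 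Using $p+q=X_a+X_b$, $n(p)=-n(q)$, the global bounds $h\le\ell_1,\ell_2$ (valid by concavity of $h$), and tightness, one gets
\[
y_p+y_q\;=\;h(p)+h(q)\;\le\;\ell_0(p)+\ell_0(q)-|c_1-c_2|\,n(q)\;=\;y_a+y_b-|c_1-c_2|\,n(q)\;<\;y_a+y_b,
\]
contradicting supermodularity of $y$. Hence $\Sigma$ is bimonotone. (One can also finish without invoking Theorem~\ref{thm:BimonotoneSubdivision}: a gradient‑jump computation shows that when all edges of $\Sigma$ have non‑negative slope, $z_1\mapsto(\partial h/\partial z_2)(z_1,z_2)$ is non‑decreasing along a.e.\ horizontal line, which integrates to $h(u\wedge v)+h(u\vee v)\ge h(u)+h(v)$ for incomparable $u,v$, the comparable case being trivial.)

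For $X\subseteq\{0,1\}^d$, view $X$ as the characteristic vectors of a sublattice of $2^{[d]}$. Fix a maximal chain $\varnothing=S_0\subsetneq S_1\subsetneq\cdots\subsetneq S_d=[d]$ in $X$, let $\sigma(k)$ be the element of $S_k\setminus S_{k-1}$, and let $A$ be the unique affine function on $\R^d$ with $A(\mathbf 1_{S_k})=y_{S_k}$. The key step is the claim $A\ge y$ on $X$. To see it, use $A(\mathbf 1_T)=y_{S_0}+\sum_{k:\,\sigma(k)\in T}(y_{S_k}-y_{S_{k-1}})$ for $\mathbf 1_T\in X$; writing the successive sets $T\cap S_k$ (each a member of $X$, being an intersection of members) and applying supermodularity of $y$ in the form ``adding a fixed element to a larger set gains at least as much'' — precisely, $y_{S_{k}}-y_{S_{k-1}}\ge y_{T\cap S_{k}}-y_{T\cap S_{k-1}}$ whenever $\sigma(k)\in T$ — one finds that the right‑hand side telescopes to $y_T-y_{S_0}$, so $A(\mathbf 1_T)\ge y_T$. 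Granting the claim, $A$ is a feasible affine majorant, so $h\le A$ globally; combined with tightness $h(\mathbf 1_{S_k})=y_{S_k}$ and concavity, $h=A$ on the simplex $\conv\{\mathbf 1_{S_0},\dots,\mathbf 1_{S_d}\}$. Since these simplices are exactly the cells of the standard triangulation of the order polytope $\conv(X)$, the linearity domains of $h$ coarsen that triangulation, so every facet of $\Sigma$ lies on a hyperplane of the form $x_i=x_j$, $x_i=0$, or $x_i=1$, all bimonotone; hence $\Sigma$ is a bimonotone subdivision, and $X$ is $\mtp$‑tidy.

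The main obstacle is exactly these two ``key steps'': the negative‑slope exclusion when $d=2$ and the affine‑majorant (telescoping) claim in the binary case — each is the place where the purely combinatorial supermodularity of $y$ on $X$ is forced to propagate to the global shape of $h$. The surrounding steps (the $-\infty$ bookkeeping, the fact that vertices of $\Sigma$ lie in $X$) are routine. Two points will need care: non‑generic position when $d=2$ — collinear samples and edges of $\Sigma$ carrying more than two sample points, which the argument above tolerates because it uses only the endpoints $X_a,X_b$ of an edge; and, in the binary case, checking that all sets entering the telescoping (the partial sets $S_k$ and the sub‑ideals $T\cap S_k$) genuinely lie in the sublattice $X$, which holds because $X$ is closed under intersection and union.
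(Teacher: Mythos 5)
Your $d=2$ half is correct and is essentially the paper's own argument: you suppose the regular subdivision induced by a tight supermodular $y$ has an edge $[X_a,X_b]$ of negative slope, use $X=\overline X$ to place $p=X_a\wedge X_b$ and $q=X_a\vee X_b$ in $X$, and then play tightness and the supermodular inequality $y_a+y_b\le y_p+y_q$ against strict concavity of $h_{X,y}$ across that edge, finishing with Theorem~\ref{thm:BimonotoneSubdivision}. Your explicit two-affine-pieces computation (and the supporting-line argument for boundary edges) just makes precise the strict inequality that the paper asserts from ``$[x,x']$ is an edge and $h_{X,y}$ is concave,'' so this part is fine and takes the same route.

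The binary half, however, has a genuine gap. You fix ``a maximal chain $\varnothing=S_0\subsetneq S_1\subsetneq\cdots\subsetneq S_d=[d]$ in $X$'' with singleton increments $\sigma(k)$. Such a complete flag need not exist inside a min-max closed $X\subseteq\{0,1\}^d$: take $X=\{\mathbf 0,\mathbf 1\}$, or $X=\{\varnothing,\{1,2\},\{3\},\{1,2,3\}\}$ in $d=3$. Closure under union and intersection (the ``care point'' you flag) guarantees that the sets $T\cap S_k$ stay in $X$, but it cannot manufacture the $S_k$ themselves, so the interpolating affine $A$, the telescoping inequality, and the identification of the chain simplices with ``the cells of the standard triangulation of $\conv(X)$'' are justified only for the full cube $X=\{0,1\}^d$. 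In that case your greedy/chain argument is a correct, self-contained proof of the fact the paper simply cites (that the tent function over tight supermodular heights is the Lov\'asz extension, hence $L^\natural$-concave and supermodular); but the theorem's hypothesis $X\subseteq\prod_{i=1}^d\{a_i,b_i\}$ allows proper sublattices, and that is exactly the case your writeup does not cover. The repair is the natural one: maximal chains of $X$ run from $\min(X)$ to $\max(X)$ and each step adds one block of the preorder that $X$ induces (Birkhoff representation of rings of sets); define $A$ only on the affine hull of $\conv(X)$ by interpolating $y$ along such a chain, and replace the condition ``$\sigma(k)\in T$'' by ``$S_k\setminus S_{k-1}\subseteq T$'' in the telescoping step, where supermodularity still applies because $S_{k-1}\vee(T\cap S_k)=S_k$ and $S_{k-1}\wedge(T\cap S_k)=T\cap S_{k-1}$ all lie in $X$. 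The resulting chain simplices are the cells of the canonical triangulation of the marked order polytope $\conv(X)$, whose walls lie on hyperplanes of the form $x_i=x_j$ or $x_i=\mathrm{const}$, all bimonotone, after which Theorem~\ref{thm:BimonotoneSubdivision} finishes as you intended. With that modification your second half goes through, and in fact treats the sublattice case more explicitly than the paper's appeal to the Lov\'asz extension does.
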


Outside the cases covered by Theorem~\ref{thm:tidy},
there is an abundance of min-max closed configurations $X$ that fail to be MTP$_2$-tidy. This happens even if ${\rm conv}(X)$ is min-max closed, as in Example~\ref{ex_chain}. We conjecture that two-dimensional and binary are essentially the only $\mtp$-tidy configurations.

\begin{conjecture}\label{conj_tidy} 
A min-max closed configuration $X$ of points in $\mathbb R^d$ for which $\conv(X) =$ MMconv$(X)$ is $\mtp$-tidy if and only if $d = 2$ or $X\subseteq\prod_{i=1}^d\{a_i, b_i\}$, where $a_i\leq b_i\in\mathbb R$ for $i=1,\ldots, d$.
\end{conjecture}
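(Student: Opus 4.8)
\emph{Strategy.} The ``if'' direction is exactly Theorem~\ref{thm:tidy}. For the converse I would prove the contrapositive: if $X=\overline X$ satisfies $\conv(X)=\MMconv(X)$ but $\dim\conv(X)\geq 3$ (so $X$ is not effectively two-dimensional) and $X$ is not contained in any box $\prod_{i=1}^d\{a_i,b_i\}$, then some tight supermodular height vector $y$ produces a tent function $h_{X,y}$ that is not supermodular. By Theorem~\ref{thm:BimonotoneSubdivision}, producing such a $y$ is equivalent to producing tight supermodular heights whose induced regular subdivision has a cell that is not min-max closed.

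\emph{The construction.} The plan is to locate a min-max closed subset $H\subseteq X$ whose convex hull $\conv(H)$ is \emph{not} min-max closed. Such $H$ can occur only when $\dim\geq 3$ (in $\RR^2$ every min-max closed set has min-max closed convex hull, which is why Theorem~\ref{thm:tidy} has no counterexample there), and, after rescaling, its existence forces some coordinate of $X$ to take three distinct values --- this is the role of the non-binary hypothesis; the prototype is the $3$-chain of Example~\ref{ex:17}, or, inside the box $\{0,1,2\}\times\{0,1\}^2$, a square pyramid with its apex removed. Fix an affine function $\ell$ on $\RR^d$, put $y_i=\ell(X_i)$ for $X_i\in H$, and $y_i<\ell(X_i)$ for $X_i\notin H$. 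Since $\ell$ is affine and $u+v=(u\wedge v)+(u\vee v)$ coordinatewise, one has $\ell(u)+\ell(v)=\ell(u\wedge v)+\ell(u\vee v)$; combined with min-max closedness of $H$, this makes all supermodularity inequalities among pairs in $H$ hold with equality. The function $\ell$ itself certifies $h_{X,y}\leq\ell$ on all of $\conv(X)$, while concavity gives $h_{X,y}\geq\ell$ on $\conv(H)$; hence $h_{X,y}=\ell$ on $\conv(H)$, so $\conv(H)$ appears in the subdivision as a non-min-max-closed cell. A separating-hyperplane argument then makes $h_{X,y}$ drop strictly below $\ell$ at the ``missing'' meet $p\wedge q$ (or join) of two points $p,q\in\conv(H)$, giving $h_{X,y}(p)+h_{X,y}(q)>h_{X,y}(p\wedge q)+h_{X,y}(p\vee q)$, as desired.

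\emph{The obstacle.} Two things must still be arranged, and they pull against each other: the heights $y_i$ for $X_i\notin H$ must be chosen so that (i) the full vector $y$ is supermodular, and (ii) $y$ is \emph{tight}, i.e.\ every tent pole touches the graph of $h_{X,y}$ --- only tight vectors are relevant to the definition of tidiness, so a supermodular but non-tight witness does not immediately suffice. Requirement (i) is a linear feasibility problem: it amounts to finding a nonnegative \emph{submodular} function $X\to\RR$ that vanishes on $H$ and grows in the direction in which $\conv(H)$ fails to be min-max closed; the naive uniform choice $y_i=\ell(X_i)-M$ fails because $X\setminus H$ need not be min-max closed. Requirement (ii) prevents making those heights arbitrarily negative. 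Showing that a compatible choice always exists --- for \emph{every} min-max closed $X$ with $\conv(X)=\MMconv(X)$ that is genuinely $3$-dimensional and non-binary --- is the crux, and I expect it requires a structural classification of min-max closed point configurations (distributive polytopes) in $\RR^d$, $d\geq 3$, in the spirit of Queyranne--Tardella~\cite{QT}, together with a robustness lemma to the effect that a local supermodularity violation of $h_{X,y}$ cannot be undone by lowering distant tent poles. A parallel route would be to show that $\mtp$-tidiness passes to subconfigurations and then enumerate the minimal non-tidy patterns, with Example~\ref{ex_chain} as a seed --- but that inheritance statement reduces to essentially the same extension question.
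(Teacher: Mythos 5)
The statement you are asked about is Conjecture~\ref{conj_tidy}: the paper does not prove it, so there is no proof of record to compare against. Only the ``if'' direction is a theorem (Theorem~\ref{thm:tidy}, proved in Appendix~C via the Lov\'asz extension for the binary case and an edge-slope argument for $d=2$); the ``only if'' direction is supported in the paper only by examples (Example~\ref{ex_chain} and Example~\ref{ex:twelvepoints}). Your proposal correctly disposes of the ``if'' direction by citing Theorem~\ref{thm:tidy}, and your strategy for the converse --- use Theorem~\ref{thm:BimonotoneSubdivision} to reduce non-tidiness to exhibiting a tight supermodular height vector whose regular subdivision has a non-bimonotone cell, seeded by a min-max closed $H\subseteq X$ with $\conv(H)$ not min-max closed --- is exactly the kind of construction the paper's examples instantiate. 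But the proposal is not a proof, and you say so yourself: the crux is extending the affine heights $\ell|_H$ to heights on all of $X$ that are simultaneously (i) supermodular, including the inequalities coupling $H$ with $X\setminus H$, and (ii) tight, so that the vector is actually a witness under the definition of tidiness; lowering the off-$H$ poles to force the cell $\conv(H)$ fights directly against tightness. No argument is given that this extension exists for every admissible $X$, and that is precisely why the statement remains a conjecture.

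Two further points, should you pursue this. First, your contrapositive hypotheses do not match the conjecture: you assume $\dim\conv(X)\geq 3$ and $X$ not in a box, whereas the conjecture (see the paper's own restatement) asserts non-tidiness already when $d\geq 3$ and some single coordinate takes three distinct values --- this includes configurations whose convex hull is only two-dimensional, which your plan excludes; so even a completed version of your argument would prove a weaker statement. Second, the intermediate claim that $h_{X,y}=\ell$ exactly on $\conv(H)$ (hence that $\conv(H)$ is a cell) is fine when the off-$H$ heights are strictly below $\ell$, but then the final step needs the missing meet or join $p\wedge q$, $p\vee q$ to lie in $\conv(X)$ with a strict drop of $h_{X,y}$ there while supermodularity and tightness of $y$ are preserved; in Example~\ref{ex:twelvepoints} the paper shows the set of supermodular heights is not even convex, which is a warning that such perturbation arguments are delicate. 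Identifying the minimal non-tidy patterns and proving the inheritance statement you mention would itself be new content beyond the paper.
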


In other words,  if $d\geq 3$ and three points $x_1, x_2, x_3$ in $X$ have distinct $i$-th coordinates,
for some $i$, then we conjecture that the configuration $X$ is not tidy. 
Therefore, when $d\geq 3$ and the number of samples is $n\geq 3$, we conjecture that the configuration will not be MTP$_2$-tidy with probability one. Nevertheless, we still believe that in those cases the MLE is the exponential of a tent function. See Section~\ref{sec:non-tidy} for more details.
The situation is much better for LLC-tidiness. Similar to the $\mtp$ case, a necessary condition for $X$ to be LLC-tidy is that it admits a discrete $L^\natural$-extension. The following states that this condition is also sufficient.

\begin{theorem}[Theorem 7.26 in \cite{	murota2003discrete}] \label{thm:WeLoveRationals}
Let $X$ be a finite configuration in $\RR^d$ that
admits a discrete $L^\natural$-extension $\tilde{X}$.
Then $\tilde{X}$ is LLC-tidy. In particular, if $X \subset \mathbb{Q}^d$, then its discrete $L^\natural$-extension is LLC-tidy.
\end{theorem}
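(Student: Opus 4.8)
The plan is to reduce to the integer lattice and then quote the matching statement of discrete convex analysis, which is what the cited reference provides. First, by Proposition~\ref{prop:llc-extension}, since $\widetilde{X}$ exists there are a positive rational $r$ and $v\in X$ with $r(X-v)\subset\mathbb{Z}^d$; taking $r=r^\ast$ minimal and setting $\phi(x)=r^\ast(x-v)$, that proposition gives $\phi(\widetilde{X})=L(r^\ast(X-v))\cap\mathbb{Z}^d$ and $\conv(\phi(\widetilde{X}))=L(r^\ast(X-v))$, and this polytope is \emph{integral} because every defining inequality in \eqref{eqn:px} has integer right-hand side. The map $\phi$ is a coordinatewise-monotone bijection of $\mathbb{R}^d$, so it commutes with $\wedge$ and $\vee$, and it sends $\alpha\mathbf{1}$ to $(r^\ast\alpha)\mathbf{1}$; hence it carries tight $L^\natural$-concave height vectors to tight $L^\natural$-concave height vectors, tent functions to tent functions ($h_{\widetilde{X},y}=h_{\phi(\widetilde{X}),y}\circ\phi$), and continuously $L^\natural$-concave functions to continuously $L^\natural$-concave functions. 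Thus it suffices to treat the case $\widetilde{X}=P\cap\mathbb{Z}^d$ with $P:=\conv(\widetilde{X})$ an integral $L^\natural$-convex polytope.

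Next I would translate the hypothesis into Murota's language and apply his theorem. Let $y$ be a tight $L^\natural$-concave vector of heights on $\widetilde{X}$, and extend it to $g\colon\mathbb{Z}^d\to\mathbb{R}\cup\{-\infty\}$ by $g|_{\widetilde{X}}=y$ and $g\equiv-\infty$ off $\widetilde{X}$. Since $\widetilde{X}$ is its own discrete $L^\natural$-extension, it is $L^\natural$-closed, so $(p+\alpha\mathbf{1})\wedge q$ and $p\vee(q-\alpha\mathbf{1})$ lie in $\widetilde{X}$ for all $p,q\in\widetilde{X}$ and $\alpha\in\mathbb{Z}_{\ge 0}$; together with the inequalities of Definition~\ref{def:supermodular_heights} (only integer $\alpha$ give non-vacuous constraints on the lattice) this says exactly that $g$ is a discretely $L^\natural$-concave function whose effective domain $\widetilde{X}$ is an $L^\natural$-convex set. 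Because $y$ is tight, the tent function $h:=h_{\widetilde{X},y}$ is the concave closure of $g$: it is the least concave majorant of $g$, restricts to $y$ on $\widetilde{X}$, is finite and piecewise linear on $P$, and equals $-\infty$ off $P$. Now \cite[Theorem~7.26]{murota2003discrete} states that the concave closure of a discretely $L^\natural$-concave function on $\mathbb{Z}^d$ is a continuously $L^\natural$-concave function on $\mathbb{R}^d$; equivalently, $P$ is $L^\natural$-convex and $h(x)+h(x')\le h((x+\alpha\mathbf{1})\wedge x')+h(x\vee(x'-\alpha\mathbf{1}))$ for all $x,x'\in\mathbb{R}^d$ and all real $\alpha\ge 0$. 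Hence $h_{\widetilde{X},y}$ is $L^\natural$-concave, i.e.\ $\widetilde{X}$ is LLC-tidy; pulling back through $\phi^{-1}$ gives the statement for the original $\widetilde{X}$. Finally, any finite $X\subset\mathbb{Q}^d$ satisfies $r(X-v)\subset\mathbb{Z}^d$ for a common denominator $r$ and any $v\in X$, so $\widetilde{X}$ exists by Proposition~\ref{prop:llc-extension} and the first part applies.

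The only step that does genuine work is the discrete-to-continuous transfer quoted from \cite{murota2003discrete}: one must show that piecewise-linearly interpolating a discretely $L^\natural$-concave function over the regular subdivision of the lattice points of $P$ that it induces yields a function obeying the continuous translation-submodularity inequality for \emph{every} real shift $\alpha$ and \emph{every} pair of real points, not merely for lattice points and integer shifts. This is the $L^\natural$-analogue of the bimonotone-subdivision description of Theorem~\ref{thm:BimonotoneSubdivision}; reproving it from scratch would require a cell-by-cell analysis of that subdivision (the cells of an $L^\natural$-convex polytope are themselves $L^\natural$-convex, and the linear piece on each cell must be checked against shifts that move a point into and out of neighboring cells), which is exactly why we prefer to cite Murota's result.
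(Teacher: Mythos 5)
Your proposal is correct and takes essentially the same route as the paper: the paper offers no independent proof of Theorem~\ref{thm:WeLoveRationals}, attributing the substance entirely to \cite[Theorem 7.26]{murota2003discrete}, and your argument is precisely that citation made explicit, prefaced by the routine rescaling $x\mapsto r^\ast(x-v)$ to the integer lattice (already implicit in Proposition~\ref{prop:llc-extension}) and the observation that the tent function of a tight height vector is the concave closure of the associated discretely $L^\natural$-concave function. The extra reduction steps you spell out are sound and consistent with the paper's setup, so no further comparison is needed.
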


For computational purposes, numbers in $\mathbb{R}$ are usually rounded to numbers in $\mathbb{Q}$.
Theorem \ref{thm:WeLoveRationals} hence means that, for practical purposes, any
given sample $X$ can be extended to an LLC-tidy configuration.
Theorem~\ref{thm:tidy_solution} implies that the LLC MLE is the exponential of a tent function
which can be computed by solving a finite-dimensional convex optimization problem. 

Note that $\mtp$-tidiness does not imply LLC-tidiness: by Theorem \ref{thm:tidy}, $X \subset \R^2$ is $\mtp$-tidy, but not every finite set of points in $\R^2$ can be extended to a finite LLC-tidy configuration by Proposition \ref{prop:llc-extension}. Neither does being LLC-tidy imply $\mtp$-tidy; see Example \ref{ex:twelvepoints}. One instance that is both $\mtp$-tidy and LLC-tidy is the cube
 $X = \{0,1\}^d$. Here, the tight supermodular function $h_{X,y}$ is known as the \emph{Lov\'{a}sz extension} of $y$ \cite{lovasz1983submodular}. 

\section[Algorithms]{Algorithms for computing the MLE for tidy configurations}
\label{sec_4}

As shown in Theorem~\ref{thm:tidy_solution}, for tidy configurations, the optimization problem~\eqref{tidy_opt_problem} finds the MLE in both the $\mtp$ and LLC cases. 
The log-concave MLE problem amounts to maximizing the objective function 
in~\eqref{tidy_opt_problem} over $\mathbb R^n$. 
The objective function is not everywhere differentiable, and the authors of~\cite{CSS} 
use subgradients to maximize it. Recently, various papers have proposed 
 accelerating the computation of the log-concave MLE from exponential to
 polynomial time by efficiently computing (sub-)gradients of (an approximation of) the 
 likelihood function~\cite{AxeVal18,DiaSidSte18,RatSch18}. 
 We are optimistic that these approaches can be extended to
obtain a polynomial time algorithm also for 
 computing  the log-concave MTP$_2$ MLE.

Our implementation uses the original exponential time algorithm in~\cite{CSS} for computing subgradients
combined with the \emph{Frank-Wolfe}  (or \emph{conditional gradient}) method for solving the constrained problem~\eqref{tidy_opt_problem}. Even though we use subgradients instead of gradients at points of non-differentiability, the Frank-Wolfe algorithm is guaranteed to find the global optimum~\cite{W}.

Algorithm~\ref{alg:tidy_estimation} computes
 the $\mtp$ MLE for configurations $X$ in $\mathbb R^2$.
 It easily extends to other tidy configurations.
Its first step is Algorithm~\ref{alg:min-max_closure} for computing the min-max closure $\overline X$ of the set $X$. The algorithm uses
an alternative characterization of MMconv$(X)$ (cf. Lemma \ref{lem:mmconv} in Appendix~\ref{app_4}).
As a  speedup, our algorithm iterates through the points $(a_1,b_2)$ in $\RR^2$,
for all pairs $a,b\in X$, and checks whether they lie in MMconv$(X)$. The resulting set $X'$ gives the
 tent pole locations. Then the algorithm computes the set $\mathcal S$ of tight supermodular heights, as described in Algorithm~\ref{alg:supermodular_heights}, and uses the conditional gradient method 
 (Algorithm~\ref{alg:FrankWolfe_sub}) to find the optimal solution. 

The LLC analogue is given in Algorithm \ref{alg:tidy_estimation.llc}. It computes the LLC MLE for rational 
configurations $X\subset \mathbb Q^d$.  The $L^\natural$-extension of a set $X$ is computed using Algorithm \ref{alg:extension.llc}. Then
Algorithm \ref{alg:supermodular_heights.2} computes $\mathcal{S}$ based on an alternative characterization of $L^\natural$-concavity given in \cite[Proposition 7.5]{murota2003discrete}. The proof of  correctness is given in Appendix~\ref{app_4}. Finally, the Frank-Wolfe method (Algorithm~\ref{alg:FrankWolfe_sub}) solves the optimization problem using subgradient optimization. 

We now illustrate how Algorithms~\ref{alg:tidy_estimation} and \ref{alg:tidy_estimation.llc} perform on 
Gaussian samples.

\begin{figure}[!b]
	\vspace{-0.5cm}
	\includegraphics[width=0.3\textwidth]{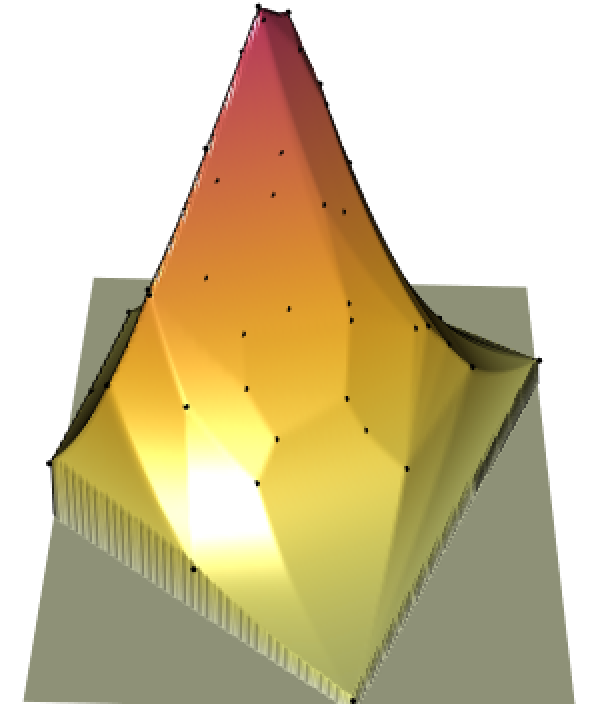}\hspace{0.25cm}
	\vspace{-0.3cm}\includegraphics[width=0.33\textwidth]{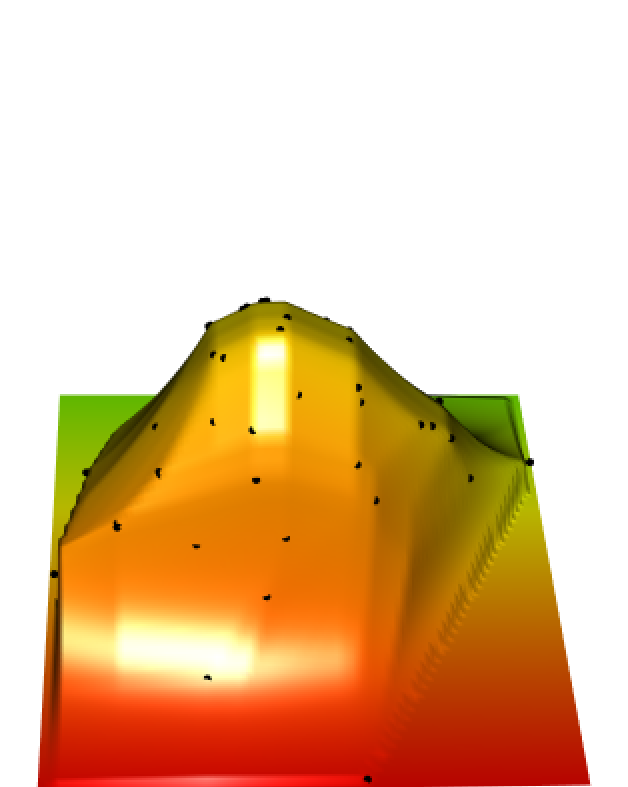}\hspace{0.25cm}
	\includegraphics[width=0.3132\textwidth]{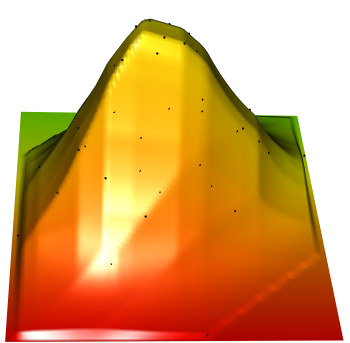}
	\caption{Density estimates for 55 samples from a standard Gaussian distribution in $\mathbb{R}^2$; Log-concave using~\cite{CGS} (left), log-concave MTP$_2$ (middle), log-concave LLC (right).}
	\label{2d_estimation}
\end{figure}

	\begin{algorithm}[!t]
		\label{alg:tidy_estimation}
		\caption{Computing the $\mtp$ MLE in $\mathbb{R}^2$}
		\LinesNumbered
		\DontPrintSemicolon
		\SetAlgoLined
		\SetKwInOut{Input}{Input}
		\SetKwInOut{Output}{Output}
		\Input{$n$ samples $X\subset\mathbb R^2$ with weights $w\in\mathbb R^n$.}
		\Output{the optimal heights $\overline y\in\mathbb R^N$, $N = |\overline X|$, corresponding to the points in $\overline X$.}
		\BlankLine
		Compute the min-max closure $\overline X$ of $X$ using Algorithm~\ref{alg:min-max_closure};
		\;
		Set the new weights $\overline w$ equal to the old weights $w$ padded with zeros for each of the extra points in $\overline X$;
		\;
		Compute the set $\mathcal S$ of inequalities for all supermodular heights $\overline y$ using Algorithm~\ref{alg:supermodular_heights};
		\;
		Compute $\overline y$ as the optimum for \eqref{tidy_opt_problem} using Algorithm~\ref{alg:FrankWolfe_sub}.
	\end{algorithm}
	\begin{algorithm}[!t]
		\label{alg:tidy_estimation.llc}
		\caption{Computing the LLC MLE for $X \subset \mathbb{Q}^d$}
		\LinesNumbered
		\DontPrintSemicolon
		\SetAlgoLined
		\SetKwInOut{Input}{Input}
		\SetKwInOut{Output}{Output}
		\Input{$n$ samples $X\subset\mathbb Q^d$ with weights $w\in\mathbb R^n$.}
		\Output{the optimal heights $\tilde y \in \mathbb{R}^N$, $N = |\tilde X|$, corresponding to the points in $\tilde X$.}
		\BlankLine
		Compute the discrete $L^\natural$-extension $\tilde X$ of $X$ using Algorithm \ref{alg:extension.llc}		\;
		Set the new weights $\tilde w$ equal to the old weights $w$ padded with zeros for each of the extra points in $\tilde X$;
		\;
		Compute the set $\mathcal S$ of inequalities for all L$^{\#}$-concave heights $\tilde y$ using Algorithm~\ref{alg:supermodular_heights.2};
		\;
		Compute $\tilde y$ as the optimum for \eqref{tidy_opt_problem} using Algorithm~\ref{alg:FrankWolfe_sub}.
	\end{algorithm}
	\begin{algorithm}[!h]
		\label{alg:min-max_closure}
		\caption{Computing the min-max closure of a finite set $X\subset\mathbb{R}^2$}
		\LinesNumbered
		\DontPrintSemicolon
		\SetAlgoLined
		\SetKwInOut{Input}{Input}
		\SetKwInOut{Output}{Output}
		\Input{a finite set of points $X\subset\mathbb R^2$;}
		\Output{the min-max closure $\overline X$ of $X$.}
		\BlankLine
		Create an  $n_1\times n_2$ index table $T$ filled with zeros corresponding to grid$(X)$, where $n_i$ is the number of different $i$-th coordinates among the points in $X$.
		\;
		For each $x_i \in X$, find its place in $T$ and set the corresponding value of $T$ to $i$.
		\;
		Set $\overline X = X$;
		\;
		Let $C = $ conv$(X\cup \{\min(X), \max(X)\})$ and counter $= \#X + 1$;
		\;
		For $i$ from 1 to $n_1$ do:\;
		\hspace{1em} For $j$ from 1 to $n_2$ do:\;
		\hspace{2em} If the point $p[i,j]$ corresponding to position $i,j$ in $T$ is in $C$ then set \;
        \hspace{3em} $T[i,j] =$ counter; counter $=$ counter $+ 1$; Add $p[i,j]$ to $\overline X$;
		\;
		Output $\overline X$.
	\end{algorithm}

{
	\begin{algorithm}[t]
		\label{alg:extension.llc}
		\caption{The~discrete~$L^\natural$-extension~of~a~finite~set~$X\!\subset\!\mathbb{Q}^d$}
		\LinesNumbered
		\DontPrintSemicolon
		\SetAlgoLined
		\SetKwInOut{Input}{Input}
		\SetKwInOut{Output}{Output}
		\Input{a finite set of points $X\subset\mathbb{Q}^d$;}
		\Output{the discrete $L^\natural$-extension $\tilde{X}$ of $X$.}
		\BlankLine
		Let $m \in \mathbb{N}$ be the smallest integer such that $m\cdot X \subset \mathbb{Z}^d$;\;
        Compute $L(m\cdot X)$ via \eqref{eqn:px}; \;
        Set $X' = L(m\cdot X) \cap \Z^d$; \;
        Output $\tilde{X} = \{\frac{1}{m}x': x' \in X'\}$.
	\end{algorithm}
	\begin{algorithm}[!h]
		\label{alg:supermodular_heights}
		\caption{Computing the inequalities $\mathcal{S}_{\textrm{supermodular}}$ in $\mathbb{R}^2$}
		\LinesNumbered
		\DontPrintSemicolon
		\SetAlgoLined
		\SetKwInOut{Input}{Input}
		\SetKwInOut{Output}{Output}
		\Input{the index table $T$ generated for $X$ in Algorithm~\ref{alg:min-max_closure}}
		\Output{the set of inequalities defining the cone of supermodular heights $\mathcal S$}
		\BlankLine
        Set $\mathcal S$ to be an empty set of inequalities \;
		For $i$ from $1$ to $n_1-1$:\;
		\hspace{1em} For $j$ from 1 to $n_2-1$:\;
		\hspace{2em} If $T[i,j]\neq0, T[i+1,j]\neq 0, T[i,j+1]\neq 0, T[i+1,j+1]\neq 0$, then \; 
        \hspace{3em} add the inequality $y_{T[i,j]} + y_{T[i+1,j+1]} - y_{T[i+1,j]} - y_{T[i,j+1]} \geq 0$ to $\mathcal S$;\;
		Output $\mathcal S$.
	\end{algorithm}
	\begin{algorithm}[!h]
		\label{alg:supermodular_heights.2}
		\caption{Computing the inequalities $\mathcal S_{L^\natural\textrm{-concave}}$ in $\mathbb{Z}^d$}
		\LinesNumbered
		\DontPrintSemicolon
		\SetAlgoLined
		\SetKwInOut{Input}{Input}
		\SetKwInOut{Output}{Output}
		\Input{Finitely many points $X \subset \mathbb{Z}^d$ that are LLC-tidy}
		\Output{the set of inequalities defining the cone of $L^\natural$-concave heights $\mathcal S$}
		\BlankLine
        Let $e_i = $ $i$-th coordinate vector, $e_0 = -\sum_{i=1}^d e_i$,\, $\mathcal S$ empty set of inequalities; \;
		For each $x \in X$: \;
        \hspace{1em} For each $i,j = 0, 1, \dots, d$, $i < j$: \;
        \hspace{2em}  	If $x + e_i + e_j, x + e_i, x+e_j \in X$ then\; 
        \hspace{3em} add the inequality
                	\,$y(x) + y(x+e_i+e_j) - y(x+e_i) - y(x+e_j) \geq 0$\,  to $\mathcal S$\;
		Output $\mathcal S$.
	\end{algorithm}
	\begin{algorithm}[!h]
		\label{alg:FrankWolfe_sub}
		\caption{Frank-Wolfe update}
		\LinesNumbered
		\DontPrintSemicolon
		\SetAlgoLined
		\SetKwInOut{Input}{Input}
		\SetKwInOut{Output}{Output}
		\Input{objective function $f:\mathbb R^n\to\mathbb R$, function $g: \mathbb R^n\to\mathbb R^n$ producing a subgradient of $f$, feasible set $\mathcal S\subseteq\mathbb R^n$.}
		\Output{solution $y^* = \text{argmin}_{y\in\mathcal S} f(y)$.}
		\BlankLine
		Initialize $y^{(0)}$ in $\mathcal S$
		\;
		For $k$ from 1 to $N$ do:
		\;
		\hspace{1em} Set $s^{k-1} = $ argmin$_{s\in\mathcal S}\,\, g(y^{(k-1)})^T\,\, s$ via a linear program;
		\;
		\hspace{1em} Set $y^{(k)} = (1 - \gamma_k)y^{(k-1)} + \gamma_ks^{(k-1)}$, where $\gamma_k = \frac2{k+1}$;
		\;
        Output $y^{(N)}$.
	\end{algorithm}

\begin{example}
Let $X$ consist of 55 i.i.d.~samples from a standard Gaussian distribution  in $\mathbb R^2$. In Figure~\ref{2d_estimation}, we show the corresponding log-concave density estimator on the left, the log-concave MTP$_2$ density estimator in the middle, and the log-concave LLC density estimator on the right. The log-concave density estimator was computed using the package {\tt LogConcDEAD} described in~\cite{CGS}. The log-concave MTP$_2$ density estimator was computed using Algorithm~\ref{alg:tidy_estimation} for solving the optimization problem~\eqref{tidy_opt_problem}. The min-max closure $\overline X$ was computed using Algorithm~\ref{alg:min-max_closure} and consisted of the original 55 samples plus 2691 additional points. The LLC density estimator was computed using Algorithm \ref{alg:tidy_estimation.llc} with input $X'$ obtained by rounding each point in $X$ up to the first decimal place. The discrete $L^\natural$-extension of $X'$ was computed using Algorithm~\ref{alg:extension.llc} and consisted of the original 55 samples plus 2496 additional points. An implementation is provided in
\begin{center}\verb+http://github.com/erobeva/MTP_2_density_estimation+ .\end{center}
\noindent 
The $\ell_2$, Hellinger, and $\ell_\infty$ distances between the estimates and the true distribution in our small example are shown in Table~1. 
This example indicates the power of MTP$_2$ and LLC constraints: with only 55 samples, the log-concave MTP$_2$ and LLC estimators look close to a Gaussian distribution, while the log-concave density estimator is still quite rough. 

We leave the computation of the statistical rates of convergence of our estimators to future work. We believe that the artificial addition of all the points necessary to obtain the min-max closure and the $L^{\natural}$-extension is potentially responsible for a faster statistical rate of convergence, though they do form a computational expense. This number of points appears to be on the order of $n^d$ on average, which leads to $O(n^d)$ "samples" that we use to compute our estimators. Note that while LLC is a much smaller subset and has nice theoretical guarantees, finding the exact LLC MLE in the non-tidy case is more computationally expensive than finding the approximate MLE under the MTP$_2$ constraint. This is beause the $L^\natural$-extension of $X$ is generally larger than MMconv$(X)$, and its discretization certainly contains more points than those used by the optimization algorithm for finding the MLE under MTP$_2$. Thus, a practical solution would be to follow our suggested algorithm to compute the MTP$_2$ MLE, and if it is not LLC, then one can compute the exact LLC estimator. This brings forth the interesting open question of how closely the LLC estimator approximates the MTP$_2$ estimator under model misspecifications, which we leave for future work.

\begin{center}
\begin{table}
 \hspace{2cm} \begin{tabular}{| c |c c c |}
    \hline
    &log-concave&log-concave $\mtp$& log-concave LLC \\ \hline
    $\ell_2$-loss & 0.0115093 & 0.00539397 & 0.00627162 \\
    Hellinger loss & 0.1210729 & 0.09313234 & 0.06755705 \\
    $\ell_\infty$-loss & 0.0981327 & 0.03811975 & 0.04853513 \\ \hline
  \end{tabular}
    \caption{Comparison of the errors of the log-concave, log-concave \& MTP$_2$, and log-concave \& $L^{\natural}$-concave estimators.}
    \end{table}
\end{center}
\end{example}

\section{The MLE in the general case}
\label{sec_5}

By Theorem~\ref{thm:tidy_solution}, in the tidy case, the MLE is always the exponential 
of a tent function and can be found by solving a finite-dimensional convex
optimization problem.
In this section we explore the non-tidy case. 
A na\"{i}ve approach is to start by
 computing the log-concave MLE.
 Following \cite{CSS},
  this is the exponential of a tent function. If that MLE is $\mtp$/LLC, then it is also the 
  MLE under log-concavity plus $\mtp$/LLC. It is therefore of interest to 
  characterize supermodular/$L^\natural$-concave tent functions. 
 Although we do not prove that the MLE is always the exponential of a tent function 
in the non-tidy case, we provide partial justification towards this conjecture. At the end of this section we compute the MLE in a specific non-tidy example.

\subsection{Characterization of supermodular/$L^\natural$-concave tent functions}

Without loss of generality we assume that our sample $X$ satisfies $X = \overline X$ and 
MMconv$(X) = $ conv$(X)$. If this is not the case, Algorithm~\ref{alg:QT} can be used to add points to $X$ so that those two conditions are satisfied. Every piecewise linear function $h_{X,y}$ induces a regular 
polyhedral subdivision of the point configuration $X$. A {\em polyhedral subdivision of a point configuration $X$} is a polyhedral complex $\Delta\subseteq\RR^d$ whose set of vertices is a subset of $X$ and the union of whose cells, denoted $|\Delta|$, equals conv$(X)$. The cells in $\Delta$ are themselves convex polyhedra of various dimensions, and any two cells
intersect in a polyhedron that is a face of each. If all cells in $\Delta$ are simplices
then $\Delta$ is called a {\em triangulation}. A polyhedral subdivision $\Delta$ of $X$ is {\em regular} if there exists a set of heights $y\in\RR^n$ such that the polyhedra in $\Delta$ are the regions of linearity of the piecewise linear function $h_{X,y}$.
We refer to the textbook \cite{DRS} for all relevant basics on subdivisions and triangulations.

We are now ready for the main result in this section which is the following characterization, in terms of polyhedral subdivisions, of the functions $\,f : \RR^d \rightarrow \RR \cup \{-\infty\}\,$ that are
 concave, supermodular, and piecewise linear.
 
\begin{theorem}\label{thm:BimonotoneSubdivision}
 Let $f:\mathbb R^d \to \mathbb R \cup \{-\infty\}$ be a piecewise linear concave function. Let $\Delta$ be the subdivision of the effective domain of $f$, whose cells are the regions of linearity of~$f$. Then, $f$ is supermodular if and only if each 
 cell of $\Delta$ is min-max closed, i.e.~defined by bimonotone linear inequalities.
\end{theorem}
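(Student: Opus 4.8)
The plan is to prove the two implications separately. The forward direction — $f$ supermodular $\Rightarrow$ every cell of $\Delta$ is min-max closed — admits a short direct argument that uses only the affineness of $f$ on each cell together with Corollary~\ref{cor:bimonotone}. The converse I would reduce to a statement about concave piecewise-linear functions of two variables, using the standard fact that a function on $\R^d$ is supermodular if and only if each of its restrictions to a pair of coordinates (fixing the rest) is supermodular.

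\emph{Forward direction.} Let $\sigma$ be a maximal cell of $\Delta$, on which $f$ coincides with an affine function $\ell_\sigma$. Since $f$ is concave and $\sigma$ is a full-dimensional region of linearity, $f\le\ell_\sigma$ on $\mathrm{dom}(f)$, and the set $\{x\in\mathrm{dom}(f):f(x)=\ell_\sigma(x)\}$ is exactly $\sigma$ (it is the $\arg\max$ of the concave function $f-\ell_\sigma$, hence convex, and it cannot extend past $\sigma$ without merging two regions of linearity). Fix $u,v\in\sigma$. Supermodularity forces $u\wedge v$ and $u\vee v$ into $\mathrm{dom}(f)$ (else the right side of the supermodular inequality is $-\infty$ while the left side is finite), and then supermodularity, $f\le\ell_\sigma$, and the identity $u\wedge v+u\vee v=u+v$ (which gives $\ell_\sigma(u\wedge v)+\ell_\sigma(u\vee v)=\ell_\sigma(u)+\ell_\sigma(v)$) pinch the chain $\ell_\sigma(u)+\ell_\sigma(v)=f(u)+f(v)\le f(u\wedge v)+f(u\vee v)\le\ell_\sigma(u\wedge v)+\ell_\sigma(u\vee v)$ into equalities. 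Hence $f$ agrees with $\ell_\sigma$ at $u\wedge v$ and $u\vee v$, so both lie in $\sigma$; thus $\sigma$ is min-max closed and, by Corollary~\ref{cor:bimonotone}, cut out by bimonotone inequalities. Every cell of $\Delta$ is a face of a maximal cell, and a face of a polyhedron defined by bimonotone inequalities is again so defined (a face tightens some of these inequalities to equalities, and $a\cdot z=b$ is the conjunction of the bimonotone inequalities $a\cdot z\ge b$ and $-a\cdot z\ge -b$). This settles the forward direction.

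\emph{Converse direction.} Fix coordinates $i\ne j$ and values for the others, and let $\phi$ be the induced function of $(z_i,z_j)$. Slicing a bimonotone inequality $a\cdot z\ge b$ yields a bimonotone inequality in $z_i,z_j$, a single-variable inequality, or a constant; hence the subdivision of $\mathrm{dom}(\phi)$ into slices of cells of $\Delta$ consists of min-max closed polygons on each of which $\phi$ is affine. It therefore suffices to prove: \emph{if a concave piecewise-linear $\phi$ on a polygon is affine on each piece of a subdivision into min-max closed polygons, then $\phi$ is supermodular.} I would show $\partial\phi/\partial z_j$ is nondecreasing in $z_i$ off the (measure-zero) union of edges and integrate to get $\phi(s,t')+\phi(s',t)\le\phi(s,t)+\phi(s',t')$ for $s\le s'$, $t\le t'$. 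The local computation: moving in the $+z_i$ direction across an edge $E$, the forward normal $\nu$ to $E$ (the one with $\nu\cdot e_i>0$) has $\nu_j\le 0$ precisely because $E$ has nonnegative slope, while concavity forces the gradient of $\phi$ to jump by $c\,\nu$ with $c\le 0$ (the jump is normal to $E$ and the directional derivative decreases); hence $\partial\phi/\partial z_j$ jumps by $c\,\nu_j\ge 0$.

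I expect the essential content to be light — the two-dimensional gradient computation and the identity $u\wedge v+u\vee v=u+v$ do the real work — and the main obstacle to be the geometric bookkeeping: that $\Delta$ is pure, so every cell is a face of a maximal cell; that $\{x:f(x)=\ell_\sigma(x)\}$ is exactly $\sigma$; that the two-dimensional slice subdivisions are genuinely the ones induced by slicing $f$, with facets of cells restricting (for generic slices) to edges with the expected normals; and, for the converse, that $\mathrm{dom}(f)$ is itself min-max closed, so the supermodular inequality is never vacuously violated by a $-\infty$ value — this last point holds because the facet normals of $\mathrm{dom}(f)$ occur among the bimonotone facet normals of the maximal cells, so $\mathrm{dom}(f)$ is bimonotone by Corollary~\ref{cor:bimonotone}. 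None of these is deep, but they are where care is required.
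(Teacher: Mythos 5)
Your forward direction is fine (it is a sandwich argument with the affine majorant $\ell_\sigma$ rather than the paper's strict-concavity-at-the-midpoint argument, but both work), and your two-dimensional gradient-jump argument is a legitimate way to handle the planar case. The genuine gap is the reduction of the converse from $\R^d$ to $d=2$: the ``standard fact'' that supermodularity is equivalent to supermodularity of all restrictions to coordinate pairs holds for functions that are finite on a product domain (a box or all of $\R^d$), and it is false for extended-real-valued functions such as yours. Concretely, let $D=\mathrm{conv}\{(0,0,0),(1,1,0),(0,0,1),(1,1,1)\}\subset\R^3$ and set $f(x)=\min(1,\,2-x_1-x_3)$ on $D$, $f=-\infty$ off $D$. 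Every coordinate-pair restriction of $f$ has a chain as its effective domain (fixing $x_3$ gives the diagonal segment $x_1=x_2$; fixing $x_1$ or $x_2$ gives a vertical segment), so every such restriction is trivially supermodular; yet $f(1,1,0)+f(0,0,1)=2>1=f(0,0,0)+f(1,1,1)$, so $f$ is not supermodular. The reason the usual Topkis-type equivalence breaks is that two points $u,v$ may differ in all coordinates, so no coordinate slice sees both of them, and the telescoping argument that bridges this needs the mixed-coordinate corners of the box $[u\wedge v,\,u\vee v]$ to lie in $\mathrm{dom}(f)$ — which min-max closedness of $\mathrm{dom}(f)$ does not give (e.g.\ $\mathrm{dom}(f)=\{|x_1-x_2|\le\epsilon\}\cap[0,1]^3$ contains $u=(1,1,0)$ and $v=(0,0,1)$ but none of $(1,0,0),(0,1,0),(1,0,1),(0,1,1)$). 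So after your Step 1--2 you have only slice supermodularity, which in this setting is strictly weaker than supermodularity; closing that gap is essentially the theorem itself, not a citation.

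The paper avoids this by slicing along a different plane: given $u,v$, it restricts $f$ to the two-dimensional affine plane $L$ through $u,v,u\wedge v,u\vee v$, parametrized by $(\alpha,\beta)\mapsto u\wedge v+\alpha\,(0,\dots,0,u_{k+1}-v_{k+1},\dots,u_d-v_d)+\beta\,(v_1-u_1,\dots,v_k-u_k,0,\dots,0)$ after reordering so that $u_i\ge v_i$ exactly for $i\le k$. This parametrization is order-preserving, and a short computation shows it carries bimonotone linear forms on $\R^d$ to bimonotone affine forms in $(\alpha,\beta)$, so $f|_L$ is a planar concave PL function whose induced subdivision is again bimonotone and whose domain contains all four points of interest; the planar case then gives the supermodular inequality for this particular $u,v$. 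If you replace your coordinate-slice reduction by this restriction to $L$ (and keep your gradient-monotonicity proof as the planar ingredient, checking as you do that the sliced domain is min-max closed so the rectangle over which you integrate lies in it), the argument goes through.
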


The proof of Theorem~\ref{thm:BimonotoneSubdivision} is given in Appendix~\ref{app:thmBimonotone}. The analogous theorem for $L^\natural$-concave tent functions is the following result of Murota.

\begin{theorem}[Theorem 7.45, \cite{murota2003discrete}]
 Let $f:\mathbb R^d \to \mathbb R \cup \{-\infty\}$ be~concave and
 piecewise linear, and $\Delta$ the 
 induced subdivision of the effective domain of $f$.
   Then, $f$ is $L^\natural$-concave if and only if each cell of $\Delta$ is $L^\natural$-concave.
\end{theorem}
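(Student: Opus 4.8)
Although this theorem is due to Murota, here is a clean route: \emph{lift to one higher dimension so that $L^\natural$-concavity becomes ordinary supermodularity, and then quote Theorem~\ref{thm:BimonotoneSubdivision}.} Given a piecewise linear concave $f:\RR^d\to\RR\cup\{-\infty\}$, define $\hat f:\RR^{d+1}\to\RR\cup\{-\infty\}$ by $\hat f(x_0,x_1,\dots,x_d)=f(x_1-x_0,\dots,x_d-x_0)$. Then $\hat f$ is again piecewise linear and concave, and it is invariant under translation by $\mathbf 1=(1,\dots,1)\in\RR^{d+1}$. Its effective domain and its regions of linearity are the cylinders $\hat D=\{(x_0,x):x-x_0\mathbf 1\in D\}$ and $\hat\sigma=\{(x_0,x):x-x_0\mathbf 1\in\sigma\}$ over the effective domain $D$ of $f$ and the cells $\sigma$ of $\Delta$: if $f|_\sigma=a\cdot z+b$ then $\hat f(x_0,x)=a\cdot x-(a\cdot\mathbf 1)x_0+b$ on $\hat\sigma$, and since $a\mapsto(-a\cdot\mathbf 1,a)$ is injective, adjacent lifted cells have distinct gradients exactly when the original cells do. Hence the subdivision $\hat\Delta$ induced by $\hat f$ is the cylinder over $\Delta$, with cells in bijection with those of $\Delta$.

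Two elementary equivalences then finish the argument. First, $\hat f$ is supermodular on $\RR^{d+1}$ if and only if $f$ is $L^\natural$-concave on $\RR^d$: plugging $a=(0,x)$ and $b=(-\alpha,\,y-\alpha\mathbf 1)$ with $\alpha\ge 0$ into the supermodularity inequality for $\hat f$, and using $\hat f(w+t\mathbf 1)=\hat f(w)$ together with $\min(u_i,v_i)+t=\min(u_i+t,v_i+t)$, one computes $\hat f(a)=f(x)$, $\hat f(b)=f(y)$, $\hat f(a\wedge b)=f\bigl((x+\alpha\mathbf 1)\wedge y\bigr)$, $\hat f(a\vee b)=f\bigl(x\vee(y-\alpha\mathbf 1)\bigr)$; conversely, every pair in $\RR^{d+1}$ is brought to this shape by a translation along $\mathbf 1$ and possibly a swap, so the two inequalities coincide. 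Second, $\hat\sigma$ is min-max closed if and only if $\sigma$ is $L^\natural$-convex (the meaning of ``$\sigma$ is $L^\natural$-concave''): after normalizing $x_0\le y_0$, the points $(x_0,x)\wedge(y_0,y)$ and $(x_0,x)\vee(y_0,y)$ lie in $\hat\sigma$ precisely when $(u+\alpha\mathbf 1)\wedge v$ and $u\vee(v-\alpha\mathbf 1)$ lie in $\sigma$, where $u=x-x_0\mathbf 1$, $v=y-y_0\mathbf 1$, $\alpha=y_0-x_0$. With these two equivalences and the cell bijection in hand, applying Theorem~\ref{thm:BimonotoneSubdivision} to $\hat f$ and $\hat\Delta$ and reading off both sides gives exactly the claim: $f$ is $L^\natural$-concave iff every cell of $\Delta$ is $L^\natural$-convex. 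The inequality description ``$z_i-z_j\le c$, $\ell_i\le z_i\le u_i$'' also falls out, since a $\mathbf 1$-invariant polyhedron forces a bimonotone inequality $c_iw_i+c_jw_j\ge\delta$ to satisfy $c_j=-c_i$.

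The step needing the most care is that the lifted cells $\hat\sigma$ and domain $\hat D$ are always unbounded (genuine cylinders), so Theorem~\ref{thm:BimonotoneSubdivision} — and the underlying ``min-max closed $\Leftrightarrow$ bimonotone inequalities'' fact behind Corollary~\ref{cor:bimonotone} — must be invoked in their polyhedral, not merely polytopal, form; nothing in the proof of Theorem~\ref{thm:BimonotoneSubdivision} uses boundedness of the linearity cells, so this goes through, but it should be checked explicitly. If one prefers to avoid the lift, the theorem can be proved directly by mirroring the proof of Theorem~\ref{thm:BimonotoneSubdivision}: for ``only if'', concavity of $f$ combined with the identity $\bigl((x+\alpha\mathbf 1)\wedge y\bigr)+\bigl(x\vee(y-\alpha\mathbf 1)\bigr)=x+y$ shows, as there, that whenever a cell $\tau$ contains $x$ and $y$ it also contains $(x+\alpha\mathbf 1)\wedge y$ and $x\vee(y-\alpha\mathbf 1)$; for ``if'', one first checks that $D$ is $L^\natural$-convex (its facets lie on facets of the $L^\natural$-convex maximal cells), and then restricts $f$ to a suitable two-dimensional slice on which the mixed second difference is non-negative cell by cell.
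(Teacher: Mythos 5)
Your proposal is correct, but it follows a genuinely different route from the paper: the paper gives no proof of this statement at all, simply quoting it as Theorem 7.45 of \cite{murota2003discrete}, whereas you derive it from the paper's own Theorem~\ref{thm:BimonotoneSubdivision} via the standard homogenization $\hat f(x_0,x)=f(x-x_0\mathbf{1})$ on $\RR^{d+1}$. Your three reductions all check out: translating both points by a common multiple of $\mathbf{1}$ leaves the supermodularity inequality for $\hat f$ unchanged, so every pair is indeed reducible to the shape $(0,x),(-\alpha,y-\alpha\mathbf{1})$ with $\alpha\ge 0$, which turns supermodularity of $\hat f$ into exactly the $L^\natural$-concavity inequality for $f$; the linearity regions of $\hat f=f\circ\pi$, $\pi(x_0,x)=x-x_0\mathbf{1}$, are the cylinders over those of $f$; and a cylinder $\hat\sigma$ is min-max closed precisely when $\sigma$ satisfies the $L^\natural$-convexity condition of Definition~\ref{defn:l.extension} (which is the correct reading of ``the cell is $L^\natural$-concave'' in the statement). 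Your caveat is also the right one and the only delicate point: the lifted cells and domain are unbounded, so Theorem~\ref{thm:BimonotoneSubdivision} and the equivalence ``min-max closed $\Leftrightarrow$ bimonotone inequalities'' behind Corollary~\ref{cor:bimonotone} must be invoked for polyhedra rather than polytopes; this is unproblematic because the argument in Appendix~\ref{app:thmBimonotone} nowhere uses boundedness of the cells and the Queyranne--Tardella characterization applies to closed convex sublattices in general. What your route buys is a self-contained derivation exhibiting Murota's theorem as a corollary of the bimonotone-subdivision theorem, making explicit that the two results are related by the usual $L$-versus-$L^\natural$ lift (and recovering the difference-constraint description of $L^\natural$-convex cells as a by-product); what the paper's citation buys is avoiding the polyhedral extension altogether by relying on Murota's independent proof within discrete convex analysis. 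Your closing sketch of a direct proof (midpoint argument with $((x+\alpha\mathbf{1})\wedge y)+(x\vee(y-\alpha\mathbf{1}))=x+y$ for ``only if'', two-dimensional slicing for ``if'') is consistent with the paper's proof pattern for Theorem~\ref{thm:BimonotoneSubdivision} and would also work, but the lifting argument is the cleaner of the two.
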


Theorem~\ref{thm:BimonotoneSubdivision} can be used to 
give configurations that are not $\mtp$-tidy.

\begin{example}
\label{ex_chain}
\rm
Let $ X = \{(0,0,0),  (6,0,0),  (6,4,0), (6,4,\frac{3}{2}), (8,4,2)\}$ in $\RR^3$.
This is a chain in $(\mathbb{Z}^3, \leq)$. Thus,
$X$ is min-max closed, and any height vector $y\in\RR^5$ is supermodular.
The configuration $X$ has precisely two triangulations. Neither of them is
bimonotone. Hence $X$ is not $\mtp$-tidy. 
\end{example}

Next, we study the set $\mathcal S \subset \RR^n$ of height vectors $y$ which induce a supermodular tent function. In the tidy setting, $\mathcal S$ is a convex set, since it is the set of all tight supermodular heights. However, in the non-tidy setting, $\mathcal S$ is often not convex, as illustrated in the following example.

  \begin{example}[Two cubes in $\mathbb R^3$]\label{ex:twelvepoints} 
  Fix the $n=12$ points in $\RR^3$ given~by
  \setcounter{MaxMatrixCols}{20}
 $$ X \,\, = \,\,
 \begin{pmatrix} 
  0 & 0 & 0 & 0 & 0 & 0 & 1 & 1 & 1 & 1 & 1 & 1 \\
  0 & 0 & 0 & 1 & 1 & 1 & 0 & 0 & 0 & 1 & 1 & 1 \\
  0 & 1 & 2 &  0 & 1 & 2 &   0 & 1 & 2 & 0 & 1 & 2
  \end{pmatrix}.  $$
Thus $X$ consists of the lattice points in a $  1 \times 1 \times 2$ box, obtained by adjoining two $3$-cubes. 
Consider the height vectors $y =  (5, 6, 1, 1, 4, 1, 1, 4, 1, 1, 6, 5)$ and
$y' = (11, 9, 1, 1, 0, 0, 10, 12, $ $ 16, 11, 21, 27)$.
These induce two distinct regular triangulations $\Delta$ and $\Delta'$ of $X$.
In terms of column labels for $X$, they are
$$ \begin{small} \begin{matrix} \! \Delta\, =\,
\bigl\{ \{1, 2, 5, 11\}, \{1, 2, 8, 11\}, \{1, 4, 5, 11\}, \{1, 4, 10, 11\}, \{1, 7, 8, 11\},\{1, 7, 10, 11\},  \\  \qquad \,\quad\,
\{2, 3, 6, 12\}, \{2, 3, 9, 12\}, \{2, 5, 6, 12\}, \{2, 5, 11, 12\}, \{2, 8, 9, 12\}, \{2, 8, 11, 12 \}  \bigr\}, \end{matrix} 
\end{small} $$
$$ \begin{small} \begin{matrix} \Delta' & = &    \bigl\{
 \{1, 2, 6, 12\}, \{1, 2, 9, 12\}, \{1, 4, 6, 12\}, \{1, 4, 10, 11\},  \{1, 4, 11, 12\}, \\ & &  \quad \,
  \{1, 7, 9, 12\}, \{1, 7, 10, 11 \}, \{1, 7, 11, 12 \}, \{ 2, 3, 6, 12 \}, \{2, 3, 9, 12 \} \bigr\}. \end{matrix}
  \end{small}
$$

A computation reveals that a subdivision of $X$ is bimonotone if and only if  each of its walls 
is spanned by points indexed by subsets in
$$ \begin{small} \begin{matrix} 
 \{1, 4, 7, 10\},
 \{1, 4, 8, 11\},
 \{1, 4, 9, 12\},
 \{1, 5, 7, 11\},
 \{1, 6, 7, 12\},
 \{2, 5, 8, 11\}, \\
 \{2, 5, 9, 12\},
 \{2, 6, 8, 12\}, 
 \{3, 6, 9, 12\},
 \{1, 2, 3, 10, \! 11, \!12\},
 \{1, 2, 3, 4, 5, 6\},\\
 \{1, 2, 3, 7, 8, 9 \},
 \{4, 5, 6, 10, \! 11, \!12\},
 \{7, 8, 9, 10, \! 11,\! 12 \}.
 \end{matrix} \end{small}
 $$
Hence, both $\Delta$ and $\Delta'$ are bimonotone, so $y$ and $y'$ induce supermodular tent functions. However, the convex combination  $y'' =  \frac{5}{12} y + \frac{7}{12} y'$ induces 
$$ \begin{small} \begin{matrix} \Delta''  =
  \bigl\{ \{ {\bf 1},  2, {\bf 5}, {\bf 12}\}, \{1, 2, 9, 12\}, \{{\bf 1}, 4, {\bf 5}, {\bf 12} \}, \{1, 4, 10, 11\}, \{1, 4, 11, 12\}, 
     \{1, 7, 8, 12\},   \\   \qquad \quad
\{1, 7, 10, 11\}, \{1, 7, 11, 12\}, \{1, 8, 9, 12\}, \{2, 3, 6, 12\}, \{2, 3, 9, 12\}, \{2, 5, 6, 12 \} \bigr\}. \end{matrix}
\end{small} $$
The regular triangulation  $\Delta''$ is not bimonotone because the wall given by $\{1,5,12\}$ is not bimonotone.
Therefore, the set of heights $y$ for which $h_{X,y}$ is supermodular is not convex. This in particular implies that the configuration $X$ is not $\mtp$-tidy. Further, $X$ is also an example of a configuration that is LLC-tidy (as
can be checked from Definition \ref{defn:l.extension}) but not $\mtp$-tidy.\hfill\qed
\end{example}

\subsection{Computing  the $\mtp$ MLE for non-tidy configurations}\label{sec:non-tidy}
Since rational $X$ are always LLC-tidy, we focus on configurations that are not $\mtp$-tidy. In this case, one may hope to find a finite set $X' \supseteq X$ such that $X'$ is $\mtp$-tidy. However, Conjecture~\ref{conj_tidy} suggests that this is difficult.  Algorithm~\ref{defn:conjecture.construction} provides a finite set $X' \supseteq X$
together with a particular candidate bimonotone subdivision $\Delta'$ on $X'$. Though $X'$ is not necessarily tidy, we conjecture that the optimization problem \eqref{tidy_opt_problem} solved over tent functions on $X'$ which induce a coarsening of the subdivision $\Delta'$ yields the solution to the $\mtp$ MLE problem. The intuition for Algorithm~\ref{defn:conjecture.construction} is to add a minimal number of points to $\Delta$ so that the new subdivision $\Delta'$ is bimonotone. 

{
		\begin{algorithm}[h]
		\label{defn:conjecture.construction}
		\caption{Computing a candidate MLE bimonotone subdivision}
		\LinesNumbered
		\DontPrintSemicolon
		\SetAlgoLined
		\SetKwInOut{Input}{Input}
		\SetKwInOut{Output}{Output}
		\Input{$n$ samples $X\subset\mathbb R^d$ with $X=\overline{X}$ and weights $w\in\mathbb R^n$.}
		\Output{A finite set $X' \supseteq X$ and a bimonotone subdivision $\Delta'$ on $X'$.}
		\BlankLine
		Compute the logarithm of the log-concave MLE (a tent function). Let $\Delta$ be the subdivision it induces;
		\;
		If $\Delta$ is a bimonotone subdivision, return $X' = X$, $\Delta' = \Delta$;
		\;
		If $\Delta$ is not bimonotone, compute the hyperplanes spanned by each of the bimonotone facets of $\Delta$, and intersect conv$(X)$ with those hyperplanes; call this new subdivision $\Delta'$ and its vertices $X'$;\; 
		        Output $(X',\Delta')$. 
		        	\end{algorithm}
}

\begin{conjecture}\label{conj_main} 
The MTP$_2$ log-concave MLE is a piecewise linear function whose subdivision is $\Delta'$ or any subdivision refined by $\Delta'$.
\end{conjecture}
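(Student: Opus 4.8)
\medskip
\noindent \emph{Proof strategy.} The plan is to imitate the proof of Theorem~\ref{thm:tidy_solution}, reducing the infinite-dimensional problem~\eqref{eq:optproblem} to a finite-dimensional convex program over tent functions supported on the enlarged configuration $X'$ produced by Algorithm~\ref{defn:conjecture.construction}, and then to pin down the regions of linearity of the optimizer. Throughout we assume, as in Algorithm~\ref{defn:conjecture.construction}, that $X=\overline X$ and $\conv(X)=\MMconv(X)$.

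By Theorem~\ref{thm_ex_unique} and Proposition~\ref{prop_support}, the $\mtp$ log-concave MLE $\hat f_n$ exists, is a.e.\ unique, and is supported on $\conv(X)$; write $g=\log\hat f_n$, a concave supermodular function. The first step is to show that $g$ is piecewise linear. Exactly as in the proof of Theorem~\ref{thm:tidy_solution}, $g$ must coincide with the pointwise smallest concave supermodular function $\underline h$ satisfying $\underline h(X_i)=g(X_i)$ for all $i$: otherwise $\exp(\underline h)\le\exp(g)$ with strict inequality on a set of positive measure, so $\int_{\RR^d}\exp(\underline h)<1$, and rescaling $\underline h$ by an additive constant yields a feasible $\mtp$ log-concave density with strictly larger likelihood. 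Hence~\eqref{eq:optproblem} reduces to a finite-dimensional problem once the tent-pole locations and the subdivision of $g$ are identified.

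The crux is to describe, for a supermodular height vector $y\in\RR^n$, the pointwise smallest concave supermodular function $\underline h_y$ taking the values $y$ on $X$. Its unconstrained analogue is the tent function $h_{X,y}$, with induced subdivision $\Delta=\Delta(y)$; by Theorem~\ref{thm:BimonotoneSubdivision}, $h_{X,y}$ is itself supermodular if and only if every cell of $\Delta$ is min-max closed. When it is not, the affine pieces of $h_{X,y}$ over the \emph{bimonotone} cells of $\Delta$ must persist in $\underline h_y$, since any concave function $\ge h_{X,y}$ that agrees with $h_{X,y}$ at the vertices of such a cell coincides with it on that cell; so $\underline h_y$ is at least the lower envelope of those retained affine functions, and one expects equality. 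The regions of linearity of this lower envelope are exactly the cells of the refinement $\Delta'=\Delta'(y)$ built by Algorithm~\ref{defn:conjecture.construction}. Granting that $\underline h_y$ equals this lower envelope and that $\Delta'(y)$ is bimonotone --- so that $\underline h_y$ is genuinely supermodular by Theorem~\ref{thm:BimonotoneSubdivision} --- the MLE problem becomes a finite-dimensional convex program over supermodular heights on $X'$ whose induced tent subdivision coarsens $\Delta'$; as in Theorem~\ref{thm:tidy_solution}, its minimizer is $g$, which therefore has the claimed subdivision.

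The main obstacles are threefold. First, Algorithm~\ref{defn:conjecture.construction} computes $\Delta'$ from the subdivision of the \emph{log-concave} MLE rather than from $g$, so one must show the two produce the same refinement, presumably through a direct comparison of the two optimizers. Second, it is not evident that retaining the bimonotone cells of a non-bimonotone subdivision and passing to the lower envelope always yields a subdivision all of whose cells are min-max closed, which is exactly the hypothesis Theorem~\ref{thm:BimonotoneSubdivision} needs in order to certify that $\underline h_y$ is supermodular. Third, Example~\ref{ex:twelvepoints} shows that the set of supermodular tent heights is non-convex in general, so the finite-dimensional program is a priori non-convex, and one must argue that the constraint that the subdivision coarsen $\Delta'$ isolates the convex component of that set containing the optimum. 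The second point is the combinatorial heart, and settling it appears to demand a finer structural analysis of bimonotone subdivisions than Theorem~\ref{thm:BimonotoneSubdivision} alone supplies.
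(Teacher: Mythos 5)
There is no proof of this statement to compare against: in the paper it is Conjecture~\ref{conj_main}, left open. The paper only reformulates it as Conjecture~\ref{conj_simple}, argues the equivalence of the two formulations (if the optimizer $\hat y$ of~\eqref{tidy_opt_problem.3} yields a supermodular tent function, then a geometric-mean/rescaling comparison forces the MLE to be $\exp(h_{X',\hat y})$), offers a heuristic about lowering heights at the added vertices in $X'\setminus X$ to remove non-bimonotone walls, and verifies the claim in the single Example~\ref{ex:3d_main}. Your proposal does not close this gap either: it is a strategy outline that explicitly defers exactly the three steps that constitute the open content of the conjecture, so as a proof it cannot be accepted. The portion that does go through --- that a supermodular optimum over tent heights on $X'$ would dominate any feasible density, hence pin down the MLE --- is precisely the equivalence argument already in the paper, so your plan essentially reproduces the paper's reduction and stops where the paper stops.

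Beyond being incomplete, two of your asserted intermediate steps are themselves problematic. First, the ``pointwise smallest concave supermodular function $\underline h$ with $\underline h(X_i)=g(X_i)$'' need not exist: unlike the unconstrained tent function $h_{X,y}$, the class of concave \emph{and} supermodular interpolants is not closed under pointwise infima (an infimum of concave functions need not be concave, and an infimum of supermodular functions need not be supermodular), so the very first reduction of the infinite-dimensional problem to a finite-dimensional one is not justified; this difficulty is the reason tidiness (Theorem~\ref{thm:tidy_solution}) was needed in the first place. Second, the claim that the affine pieces of $h_{X,y}$ over bimonotone cells ``must persist'' in any supermodular concave majorant, and that the resulting lower envelope has all cells min-max closed, is exactly the combinatorial heart you acknowledge is unsettled; without it, Theorem~\ref{thm:BimonotoneSubdivision} cannot be invoked to certify supermodularity, and Example~\ref{ex:twelvepoints} shows the feasible set of supermodular heights is non-convex, so the Frank--Wolfe/convex-program template of Theorem~\ref{thm:tidy_solution} does not transfer even once the tent-pole locations are fixed. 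A genuine proof would have to supply new structural results about bimonotone subdivisions (or a different argument altogether), not just the scaffolding assembled here.
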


This conjecture can be reformulated as follows. 
\begin{conjecture}\label{conj_simple} Let $X'$ be the output of Algorithm~\ref{defn:conjecture.construction}. Let $\hat y$ be the solution to the optimization problem
	\begin{equation}
	\label{tidy_opt_problem.3}
	\text{minimize}\quad - w\cdot y + \int_{\mathbb R^d} exp(h_{X',y}(z))dz \quad \,\,
	\text {s.t.}\quad\  y\in\mathcal S, 
	\end{equation}
    where  $w\in \mathbb R^{|X'|}$ assigns the original weights to the points in $X$, and weight 0 to all points in
     $X' \backslash X$. Then, the tent function $h_{X', \hat y}$ is supermodular.
\end{conjecture}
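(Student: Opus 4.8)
The plan is to reduce Conjecture~\ref{conj_simple} to Conjecture~\ref{conj_main} via Theorem~\ref{thm:BimonotoneSubdivision}, and then to attack the latter by combining a ``relative tidiness'' lemma with the envelope argument already used to prove Theorem~\ref{thm:tidy_solution}. By Theorem~\ref{thm:BimonotoneSubdivision}, the tent function $h_{X',\hat y}$ is supermodular if and only if the subdivision $\widehat\Delta$ it induces is bimonotone, i.e.\ every cell of $\widehat\Delta$ is min-max closed; and Conjecture~\ref{conj_main} asserts precisely that $\widehat\Delta$ is $\Delta'$ or a coarsening of it. So it suffices to establish: (a) $\Delta'$ is bimonotone; (b) a minimizer $\hat y$ of the convex program \eqref{tidy_opt_problem.3} can be taken with $h_{X',\hat y}$ affine on every cell of $\Delta'$; and (c) whenever a coarsening of $\Delta'$ carries such a height vector, it is again bimonotone.

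Part (a) is routine: a facet of a min-max closed cell lies on a hyperplane $a_iz_i+a_jz_j=c$ with $a_ia_j\le 0$ by Corollary~\ref{cor:bimonotone}, so each hyperplane adjoined in Algorithm~\ref{defn:conjecture.construction} is bimonotone; since $\conv(X)=\MMconv(X)$ is min-max closed, slicing it by finitely many bimonotone hyperplanes yields cells cut out by bimonotone inequalities, hence min-max closed by Corollary~\ref{cor:bimonotone}, and in particular the vertex set $X'$ is min-max closed. Part (c) is the key new ingredient: a \emph{relative tidiness lemma} asserting that if $y\in\mathbb R^{|X'|}$ is supermodular and $h_{X',y}$ is affine on every cell of the bimonotone subdivision $\Delta'$, then $h_{X',y}$ is supermodular. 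This is the analogue for $\mtp$, relative to a fixed bimonotone subdivision, of Theorem~\ref{thm:tidy} and of Murota's Theorem~\ref{thm:WeLoveRationals}: although $X'$ need not be $\mtp$-tidy, the subdivision $\Delta'$ is built to ``resolve'' the obstructing walls of Example~\ref{ex:twelvepoints}, and the claim is that this suffices for supermodularity of the heights to propagate across cells. I would prove it by induction over the cells of $\Delta'$: an affine function on a single min-max closed cell is automatically modular, so the content lies in the cross-cell inequalities, and for $x,y$ in distinct cells one tracks where $x\wedge y$ and $x\vee y$ land, using that each wall lies on a two-variable hyperplane to combine the inequalities already established.

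Granting (a) and (c), part (b) follows from the envelope/rescaling trick in the proof of Theorem~\ref{thm:tidy_solution}, carried out relative to $\Delta'$. Let $\mathcal S_0\subseteq\mathcal S$ be the convex subcone of supermodular heights $y$ with $h_{X',y}$ affine on the cells of $\Delta'$; by (c) every $\exp(h_{X',y})$ with $y\in\mathcal S_0$ is a feasible log-concave $\mtp$ density, so solving \eqref{tidy_opt_problem.3} over $\mathcal S_0$ returns the likelihood maximizer within this family. One then argues that it equals the true $\mtp$ log-concave MLE $\hat f$, which exists, is unique, supermodular, and supported on $\conv(X)$ by Theorem~\ref{thm_ex_unique} and Proposition~\ref{prop_support}: given any feasible $f$, replacing it by the tent function through its values at $X'$ (whose heights are supermodular because $X'$ is min-max closed) lowers it pointwise, and after rescaling to integrate to one weakly increases the log-likelihood, so by uniqueness the optimum has the form $\exp(h_{X',y})$; the remaining issue is that its subdivision must coarsen $\Delta'$, equivalently that some minimizer over the full cone $\mathcal S$ lies in $\mathcal S_0$, after which Theorem~\ref{thm:BimonotoneSubdivision} gives supermodularity of $h_{X',\hat y}$.

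The main obstacle is exactly this last point, which is the substance of Conjecture~\ref{conj_main}: the non-bimonotone walls removed from $\Delta$ by Algorithm~\ref{defn:conjecture.construction} must never reappear at the optimum. Example~\ref{ex:twelvepoints} shows that the set of heights inducing a supermodular tent function on a non-tidy configuration is in general non-convex and strictly smaller than $\mathcal S$, so convexity alone cannot close the gap; instead I would attempt a first-order argument showing that at any $y\in\mathcal S$ for which $h_{X',y}$ has a non-bimonotone wall there is a feasible descent direction for \eqref{tidy_opt_problem.3} that stays in $\mathcal S$ while pushing $\widehat\Delta$ toward a coarsening of $\Delta'$, so that such $y$ is not optimal. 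Pinning down this descent direction and controlling its interaction with the supermodularity inequalities defining $\mathcal S$, together with the relative tidiness lemma, is where the real difficulty lies; a successful execution would prove Conjectures~\ref{conj_main} and~\ref{conj_simple} simultaneously.
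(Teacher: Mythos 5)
There is a genuine gap here, and it is worth being precise about its nature: the statement you were asked to prove is an open conjecture in the paper. The paper offers no proof of Conjecture~\ref{conj_simple}; it only records the equivalence with Conjecture~\ref{conj_main}, gives a heuristic (if a non-bimonotone wall of the induced subdivision involves points of $X'\setminus X$, one should be able to lower the heights at those points without changing the likelihood, remove the wall, and then rescale to integrate to one, gaining likelihood), and verifies the claim in the single worked Example~\ref{ex:3d_main}. Your proposal is, in substance, an expanded version of that same heuristic, and you say so yourself at the end: the ``descent direction'' step — showing that no non-bimonotone wall can survive at a minimizer of \eqref{tidy_opt_problem.3} over $\mathcal S$ — is exactly the content of the conjecture, and you leave it unproved. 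So what you have written is a research program, not a proof; it cannot be accepted as establishing the statement.

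Two of the ingredients you treat as available also need more care. First, your ``relative tidiness lemma'' (part (c)) is asserted with only a one-sentence induction sketch. Note that Theorem~\ref{thm:BimonotoneSubdivision} does not give it for free: if $y\in\mathcal S$ and $h_{X',y}$ is affine on every cell of $\Delta'$, its regions of linearity are \emph{coarsenings} of cells of $\Delta'$, i.e.\ convex unions of min-max closed polytopes, and such a union need not be min-max closed; so supermodularity of $h_{X',y}$ is not automatic from bimonotonicity of $\Delta'$, and the cross-cell inequalities you wave at are where the work is. Second, your first-order/descent argument must contend with Example~\ref{ex:twelvepoints}: the set of height vectors inducing supermodular tent functions is non-convex, so the feasible cone $\mathcal S$ genuinely contains optima-candidates whose induced subdivisions are not bimonotone, and ruling them out requires an argument of the ``lower-then-rescale'' type that, so far, has only been carried out ad hoc in Example~\ref{ex:3d_main} (where specific supermodularity and concavity inequalities are combined with the known triangulation of the unconstrained log-concave MLE). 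Until those two steps are supplied in general, the conjecture remains open.
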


We now justify why Conjectures~\ref{conj_main} and \ref{conj_simple} are equivalent. The former clearly implies the latter.
 Now, suppose that Conjecture~\ref{conj_simple} is true. Let $\exp(f)$ be the $\mtp$ log-concave MLE, and let $\hat y$ be the optimal heights in \eqref{tidy_opt_problem.3}. Let $y = f(X')$ be the heights induced by $f$. Let $c \geq 1$ be the constant such that $\int c \cdot h_{X', y} = 1$. Then, $\ell(\exp(f))\leq \ell(\exp(h_{X', y + \log(c)})) \leq \ell(\exp(h_{X', \hat y}))$, where equality holds if and only if $f = h_{X', \hat y}$. Since $\exp(f)$ is the MLE, then equality should hold, and therefore, $f$ is a tent function.

We now give some intuition for Conjecture~\ref{conj_simple}.
By Theorem \ref{thm:BimonotoneSubdivision}, if $h_{X', \hat y}$ is not supermodular, then it induces a non-bimonotone subdivision. Consider a non-bimonotone wall in that subdivision. If it is spanned only by points in $X$, then it should have been taken care of by the inequalities for $\hat y\in\mathcal S$. If it involves some points 
in $X' \backslash X$, then, we should be able to lower the heights at those vertices 
(without changing the likelihood), and get a subdivision with that wall removed. 
But now we would have to shift up the whole tent function so that the integral still equals $1$, 
thereby getting a higher likelihood.

 In the following, we verify Conjecture \ref{conj_main} for our running example.

\begin{example}\label{ex:3d_main}
As in Example~\ref{ex_chain},  fix $d=3, n=5$ and let $X$ consist of
$$x_1 = (0,0,0),\, x_2 = (6,0,0), \,x_3 = (6,4,0),\, x_4 = (8,4,2),\, x_5 = (6,4,1.5).$$
We apply Algorithm \ref{defn:conjecture.construction} to
$\, w = \frac1{28}(15,1,1,1,10)$. The output is
 $X' = X\cup\{x_6,x_7\}$, where $x_6=(6, 3, 1.5)$, $x_7=(7.5, 4, 1.5)$.
The solution to (\ref{tidy_opt_problem.3})~is
$$y = (2.95,\, -22.05,\, -14.08,\,  -5.16,\,   0.40,\,  -2.52,\,  -6.47)\,\in\,\mathbb R^7.$$
Let $\exp(f)$ be the log-concave MTP$_2$ MLE and let $\exp(\phi)$ be the log-concave MLE. Using the software {\tt LogConcDead}~\cite{CGS}, we can compute $\phi$ and show that it induces the following triangulation of conv$(X)$ with three tetrahedra:
\begin{align}\label{eq:lcd_triang}\{x_1, x_2,x_3,x_5\},\{x_2,x_5,x_3,x_4\},
\{x_2,x_5,x_4, x_1\}
\end{align}
The interior faces of this triangulation are
$$\{x_2, x_5, x_1\},\{x_2, x_5, x_3\}: \text{bimonotone, }\quad\{x_2, x_5, x_4\}: \text{not bimonotone. }$$

\begin{center}
\includegraphics[width=0.9\textwidth]{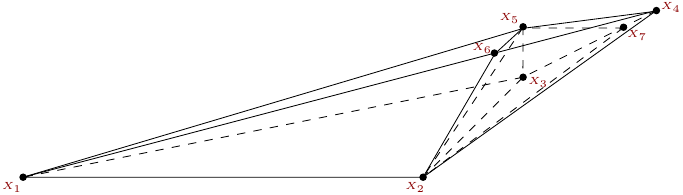}
\end{center}
Consider the hyperplanes spanned by the two interior bimonotone faces:
$$\text{affine span}\{x_2, x_5, x_1\}\,\text{ and }\, \text{affine span}\{x_2, x_5, x_3\}.$$
Intersecting these with the boundary of conv$(X)$ yields $X'=X\cup\{x_6, x_7\}$.
Now, note that the following inequalities hold for $f$:
\small
$$
\frac12f(x_6) + \frac12f(x_7)\geq \frac12 f(x_5) + \frac12f(7.5,3,1.5) \geq \frac12 f(x_5) + \frac38f(x_4) + \frac18f(x_2),
$$
\normalsize
where the first inequality follows by supermodularity, and the second follows by concavity of $f$. Suppose that at least one of these inequalities is strict. Note that for $\phi$ we have the following reverse inequality:
\small
\begin{eqnarray*}
	\frac12\phi(x_6) + \frac12\phi(x_7) &=& \frac38\phi(x_4) + \frac18\phi(x_1) + \frac38\phi(x_4) + \frac18\phi(x_3)\\ &<& \frac12 \phi(x_5) + \frac38\phi(x_4) + \frac18\phi(x_2),
	\end{eqnarray*}
\normalsize
where the inequality follows because $\phi$ induces the triangulation~\eqref{eq:lcd_triang}.
 Hence, there exists $\alpha\in(0,1)$ such that the function
 $\,g_\alpha = \alpha f + (1-\alpha)\phi \,$ satisfies
  \begin{align}\label{eq:5pointsCoplanar}
 \frac12g_\alpha(x_6) + \frac12g_\alpha(x_7) \,\,= \,\,\frac12 g_\alpha(x_5) + \frac38g_\alpha(x_4) + \frac18g_\alpha(x_2).
 \end{align}
 Moreover, $g_\alpha$ gives a higher likelihood than $f$, since $\ell(\exp(g_\alpha)) = \alpha\ell(\exp(f)) + (1-\alpha)\ell(\exp(\phi))$, and the piecewise linear function $h_{X', y'}$ on $X'$, induced by the heights of $g_\alpha$: $y' = g_\alpha(X')$ gives a higher likelihood than $g_\alpha$.
In our experiments, we optimized the likelihood over heights that satisfy the equality~\eqref{eq:5pointsCoplanar}, and the best such tent function induces the bimonotone subdivision $$\{x_1X_2x_3X_5,\, x_1X_2x_5X_6,\, x_2X_3x_5X_7,\, x_2X_5x_4X_6x_7\}.$$ 
Thus, we have found a supermodular tent function with a higher likelihood than $f$, which is a contradiction. As a consequence, equality~\eqref{eq:5pointsCoplanar} has to hold for $f$, and $f$ has to be a tent function,
namely the one computed above. \qed
\end{example}

\section{Discussion}
\label{sec_6}
In this paper, we studied the MLE for nonparametric density estimation under $\mtp$ and LLC, two shape constraints that imply strong forms of positive dependence. These shape constraints are of interest for high-dimensional applications since the MLE exists already for 3 samples (under $\mtp$) and 2 samples (under LLC), irrespective of the number of variables. We proved that the $\mtp$ MLE is a tent function in the two-dimensional or binary setting. We conjectured that this is true in general and we provided an algorithm for computing a candidate MLE. We proved that the MLE under LLC is a tent function when the samples lie in $\mathbb{Q}^d$. It can be computed by solving a finite-dimensional convex optimization problem. Since computations are usually performed in $\mathbb{Q}$ by rounding points in $\mathbb{R}$, in practice, the LLC MLE is always a tent function. Since LLC distributions form a subclass of $\mtp$ distributions, the LLC MLE can always be used as an $\mtp$ estimator, although it might not be the $\mtp$ MLE.

We furnished conditional gradient methods for the finite-dimensional convex optimization problem that computes the MLE under LLC and $\mtp$. Simulations with 55 i.i.d.~samples from a standard Gaussian distribution in $\mathbb{R}^2$ indicate fast convergence of these estimators to the true density. A question for future research is to determine these rates as compared to  the log-concave MLE. In addition, given the recent  work on methods for accelerating the computation of the log-concave MLE from exponential to polynomial time by efficiently computing (sub-)gradients of (an approximation of) the likelihood function~\cite{AxeVal18,DiaSidSte18,RatSch18}, it is natural to ask whether such approaches lead to more efficient algorithms for the MLE under log-concavity and $\mtp$. Developing efficient algorithms for computing the log-concave $\mtp$ MLE is an an interesting avenue for future research.

Since $\mtp$ is implied by many models, including latent tree models 
 in phylogenetics or single factor analysis models in psychology~\cite{MTP2Markov2015}, 
 our work suggests $\mtp$ as a strong enough shape constraint to obtain accurate density
  estimates with relatively few samples, but a large enough class to be of interest for 
  applications. While log-concavity is a natural condition to pair 
  MTP$_2$ with, in future work it would also be interesting to study other constraints.
   Note that obvious constraints such as boundedness from below on a compact set are not sufficient to ensure a bounded likelihood function.

\vspace{0.5cm}

\bibliographystyle{alpha}
\bibliography{non_parametric}

\smallskip
\noindent \small {\bf Authors' addresses:}

\smallskip

\noindent \textit{Corresponding author:} Elina Robeva,
University of British Columbia,
Department of Mathematics, {\tt erobeva@math.ubc.ca}

\noindent Bernd Sturmfels,
 \  MPI-MiS Leipzig, {\tt bernd@mis.mpg.de} \ and \
University of California, Berkeley,  {\tt bernd@berkeley.edu}

\noindent Ngoc Tran,
University of Texas, Austin, 
Department of Mathematics, {\tt ntran@math.utexas.edu}

\noindent Caroline Uhler,
Massachusetts Institute of Technology,
            IDSS and EECS Department,
            {\tt cuhler@mit.edu}.

\bigskip

\appendix

\section{Proofs of results in Section 2}
\label{app_support}

\begin{proof}[Proof of Proposition \ref{prop:llc-extension}]
By definition, $X \subseteq L(X)$, so $L(X) \neq \emptyset$. Since the coordinate of each point in $L(X)$ has a lower and upper bound, $L(X)$ is compact, and thus it is a polytope. Define
$$P^\sharp(X) = \{(x + r\mathbf{1},r): x \in X, r \in \R\} \subset \R^{d+1} .$$ 
Note that
$$ P^\sharp(X) =  \{y \in \R^{d+1}: y_i - y_j \leq c_{ij}, i,j \in [n]\}, $$
where $c_{ij} = \max_{x \in X} (x_i - x_j)$. By \cite[\S 5.5]{murota2003discrete}, this implies that $L(X)$ is $L^\natural$-convex. To see that it is the smallest $L^\natural$-convex set containing $X$, let $P$ be another $L^\natural$-convex set that contains $X$. Define 
$$P^\sharp = \{(x + r\mathbf{1},r): x \in P, r \in \R\} \subset \R^{d+1} .$$ 
 For each pair $i,j \in [n], i \neq j$, $\pi_{ij}(P^\sharp)$ must also be $L$-convex, so it has the following form for some constants $a_{ij} \leq b_{ij}$:
$$ \pi_{ij}(P^\sharp) = \{y \in \R^2: a_{ij} \leq y_1 - y_2 \leq b_{ij} \} .$$
 Since $X \subseteq P$, it follows that $b_{ij} \geq \max_{x \in X} (x_i - x_j)$, and $a_{ij} \leq \min_{x \in X} (x_i - x_j)$. So $\pi_{ij}(P^\sharp) \supseteq \pi_{ij}(P^\sharp(X))$. Since $P^\sharp$ is convex, it follows that $P^\sharp \supseteq \bigcap_{i \neq j} \pi_{ij}^{-1}(\conv(\pi_{ij}(P^\sharp(X)))) = P^\sharp(X)$. This concludes the proof of the first statement. 
For the second statement, suppose that $\tilde{X}$ exists. Since $\tilde{X}$ is $L^\natural$-closed, it satisfies (\ref{eqn:llc.closed}) for some constant $r > 0$. 
Since $X$ is finite, this implies $r(X-v) \in \Z^d$ for any $v \in X$. Thus there is a unique minimal constant $r^\ast$ such that $r(X-v)\in \Z^d$. Define $X' = r^\ast (X-v)$. Then $X'$ is a discrete $L^\natural$-convex set in the sense of \cite[\S 5.5]{murota2003discrete}, so $X' = L(X') \cap \Z^d$. Rearranging gives the RHS of (\ref{eqn:tilde.x}).  
Conversely, suppose that there exists some $v \in X$ and $r > 0$ such that $r(X-v) \subset \Z^d$. Since $X$ is finite, there is a smallest $r$ with this property, denote it $r^\ast$. If $v' \in X$, then $r(v'-v) \in \Z^d$, so $r(X-v') \subset \Z^d$, which implies that $r^\ast$ is independent of the choice of $v$. Finally, define $X'$ as the RHS of (\ref{eqn:tilde.x}). By \cite[5.5]{murota2003discrete}, $X'$ is $L^\natural$-closed. By construction, $X' \supseteq X$. To see that it is the smallest set, suppose that $Y \subset \R^d$ is another $L^\natural$-closed set containing $X$. Since $Y$ is finite, there exists a minimal constant $r(Y) > 0$ such that $r(Y)(X-v) \subset \Z^d$. By minimality of $r^\ast$, we have $r(Y) \geq r^\ast$. So $Y \supseteq X'$, with $Y = X'$ if and only if $r(Y) = r^\ast$. This concludes the proof.
\end{proof}

\begin{proof}[Proof of Proposition \ref{prop_support}]
Let $\hat{f}$ be the log-concave $\mtp$ MLE with support $S \subset \R^d$. Let $\psi_n$ be the objective function
\begin{equation}\label{eqn:psi.n}
\psi_n(f) = \sum_{j=1}^n\log(f(x_j)) - \int_{\mathbb R^d} f(x)dx.
\end{equation}
Since $\hat{f}$ is log-concave and $\mtp$, $S$ is a min-max closed convex set.
Since $\psi_n(\hat{f}) > -\infty$, $X \subseteq S$. As ${\rm MMconv}(X)$ is the smallest min-max closed convex set containing $X$, ${\rm MMconv}(X) \subseteq S$. Now, let $S' = S \backslash {\rm MMconv}(X)$, and $\phi = \hat{f}\mathbf{1}_{\MMconv(X)}$. Since $\hat{f}$ is $\mtp$, $\phi$ is also log-concave $\mtp$. Furthermore, $\psi_n(\phi) \geq \psi_n(\hat{f})$, and equality occurs if and only if $S'$ has measure $0$. As $\hat{f}$ is the maximizer, we must have equality, so ${\rm MMconv}(X) = S$ a.s, as required. For the LLC case, $L(\tilde{X})$ is the smallest $L^\natural$-convex set that contains $X$, so the same proof applies.
\end{proof}

\section{Proof of Theorem 1.1}
\label{app_1}

We here prove  our main theorem. First we derive the $\mtp$ case, then we outline the necessary steps to adapt the proof to the LLC case. Our proof builds upon \cite[Theorem 3.1, Corollary 3.5]{royset2017constrained}, which states that if the set of functions that one optimizes over is equi upper-semicontinuous (equi-usc), then the MLE exists and is consistent. In general, the class of all log-concave $\mtp$ densities on a compact support set is not equi-usc, as seen in the examples in \cite{royset2017constrained}. Our theorem strengthens \cite[Proposition 4.6]{royset2017constrained} by removing the equi-usc assumption and showing that three sample points suffice for existence and uniqueness of the log-concave $\mtp$ MLE. The proof strategy has two parts. First, Corollary \ref{cor:dimension.mtp2} and Lemma \ref{lem:dimension.llc} show that few samples suffice for the support of the MLE to be full-dimensional. Second, Lemma \ref{lem:assumption} employs the argument of \cite{CSS} to show that  one can restrict attention to a smaller subset of log-concave $\mtp$ densities and that this class is totally bounded a.s. Since total boundedness on a compact set is stronger than equi-usc, the conclusion then follows from \cite{royset2017constrained}.}

\begin{lemma}\label{lem:x.not.bim}
Suppose $n \geq 3$ and $d \geq 2$. Almost surely, $X$ is not contained in a bimonotone hyperplane.
\end{lemma}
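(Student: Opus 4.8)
The plan is to reduce the claim to a generic-position statement about planar projections and then appeal to absolute continuity. Recall from the discussion preceding Corollary~\ref{cor:bimonotone} that a bimonotone hyperplane in $\RR^d$ is the solution set of a single equation $a_i z_i + a_j z_j = b$ with $a_i a_j \le 0$ and $(a_i,a_j)\ne(0,0)$; such an equation involves at most two of the $d$ coordinates. Hence, if $X=\{X_1,\dots,X_n\}$ lies in a bimonotone hyperplane $H$ and $\{i,j\}$ is the set of coordinates occurring with nonzero coefficient in the equation of $H$, then: when $|\{i,j\}|=2$, the planar points $\pi_{ij}(X_1),\dots,\pi_{ij}(X_n)$ all lie on the line $\pi_{ij}(H)\subset\RR^2$; and when $|\{i,j\}|=1$, say $\{i\}$, the $i$-th coordinates of $X_1,\dots,X_n$ all coincide. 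The point of this reduction is that the resulting degeneracy depends only on the finite index set, not on $H$ itself, so it suffices to show that, almost surely, $\pi_{ij}(X_1),\pi_{ij}(X_2),\pi_{ij}(X_3)$ are not collinear for every pair $i\ne j$, and $X_1,X_2$ differ in each coordinate; the hypothesis $n\ge 3$ is used precisely here.

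First I would note that the sampling distribution, having a Lebesgue density $f_0$ on $\RR^d$, is absolutely continuous, so each of its two-coordinate marginals (the law of $\pi_{ij}(X_1)$) is absolutely continuous with respect to Lebesgue measure on $\RR^2$ --- its density is obtained from $f_0$ by integrating out the remaining $d-2$ coordinates --- and likewise each one-coordinate marginal on $\RR$, which in particular is nonatomic. The main step is then the collinearity estimate, a routine conditioning argument: fixing $i\ne j$, condition on $X_1,X_2$; by nonatomicity $\pi_{ij}(X_1)\ne\pi_{ij}(X_2)$ almost surely, so they span a unique line $\ell\subset\RR^2$, a Lebesgue-null set, whence absolute continuity of the $\pi_{ij}$-marginal gives $\Pr[\pi_{ij}(X_3)\in\ell \mid X_1,X_2]=0$; integrating over $X_1,X_2$ shows that $\pi_{ij}(X_1),\pi_{ij}(X_2),\pi_{ij}(X_3)$ are a.s.\ not collinear. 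The same argument gives $\Pr[X_1\text{ and }X_2\text{ agree in coordinate }i]=0$ for each $i$.

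Finally I would combine these estimates by a union bound over the $\binom{d}{2}$ coordinate pairs and the $d$ single coordinates: the event that $X$ lies in some bimonotone hyperplane is contained in a union of finitely many probability-zero events, hence has probability zero. I do not foresee a genuine difficulty; the only place calling for care is the reduction in the first paragraph --- correctly capturing all bimonotone hyperplanes, including the degenerate axis-parallel ones in which one of $a_i,a_j$ vanishes, and observing that the two-point analogue fails, which is why $n\ge 3$ is needed.
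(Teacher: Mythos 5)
Your argument is correct and is essentially the paper's own proof: both reduce containment in a bimonotone hyperplane to the projections $\pi_{ij}(X_1),\pi_{ij}(X_2),\pi_{ij}(X_3)$ lying on a line and then invoke absolute continuity of the bivariate marginal to rule this out almost surely, with a finite union bound over coordinate pairs. You merely make explicit the conditioning step behind the non-collinearity claim and the degenerate axis-parallel case, which the paper leaves implicit.
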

\begin{proof}
Suppose for contradiction that it is contained in $H = \{x \in \R^d: ax_i - bx_j = c_{ij}\}$,
with $a b \geq 0$. Let $x_1, x_2, x_3$ be points in $X$ that belong to $H$.
Let $\pi_{ij}:\R^d \to \R^2$ be the map $x \mapsto (x_i,x_j)$.
Then $\pi_{ij}(x_1), \pi_{ij}(x_2), \pi_{ij}(x_3) \in \pi_{ij}(H)$. However, $\pi_{ij}(x_1), \pi_{ij}(x_2), \pi_{ij}(x_3)$ are distributed as three i.i.d points from a distribution whose density is absolutely continuous w.r.t. the Lebesgue measure on $\R^2$, and hence they a.s.~do not lie on a line.
\end{proof}

\begin{lemma} \label{lem:non_bim_hyperplane} Let $L\subset \mathbb R^d$ be a linear subspace such that $2\leq \dim L < d$ and which is not contained in any bimonotone hyperplane. Then, there exist points $u,v\in L$ such that $u\wedge v, u\vee v\in \mathbb R^d\setminus L$. 
\end{lemma}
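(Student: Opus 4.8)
The plan is to replace the asserted statement by an equivalent one. For $u,v\in L$ we have the identity $(u\wedge v)+(u\vee v)=u+v$, and $u+v\in L$ since $L$ is a subspace; hence $u\wedge v\in L$ if and only if $u\vee v\in L$. So it suffices to produce $u,v\in L$ with $u\wedge v\notin L$ --- then $u\vee v=(u+v)-(u\wedge v)\notin L$ for free --- i.e.\ it suffices to show that $L$ is \emph{not} min-max closed. Assume for contradiction that it is.

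\textbf{Main argument.} Since $L$ is a polyhedron which is min-max closed, the polyhedral form of the Queyranne--Tardella theorem \cite{QT} (the unbounded analogue of Corollary~\ref{cor:bimonotone}) writes $L=\{x\in\R^d:\langle a_m,x\rangle\ge b_m,\ m\in M\}$ with every inequality bimonotone, so each $a_m$ is supported on at most two coordinates with a nonpositive product of entries. Because $L$ is a linear subspace it is invariant under $x\mapsto tx$ for $t>0$ and under $x\mapsto-x$: the first forces $\langle a_m,x\rangle\ge 0$ on $L$ (else, at a point of $L$ where $\langle a_m,\cdot\rangle<0$, sending $t\to\infty$ breaks the inequality), and the second then forces $\langle a_m,x\rangle\le 0$ on $L$, so $\langle a_m,x\rangle=0$ for all $x\in L$ and all $m$. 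As $\dim L<d$ we have $L\neq\R^d$, hence some $a_m\neq 0$, and then $L$ is contained in the bimonotone hyperplane $\{x:\langle a_m,x\rangle=0\}$ --- contradicting the hypothesis. Therefore $L$ is not min-max closed.

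\textbf{A self-contained variant.} If one wishes to avoid invoking the unbounded case of \cite{QT}, argue as follows. For each pair $i\neq j$, $\pi_{ij}(L)$ is again a min-max closed linear subspace of $\R^2$, since $\pi_{ij}$ intertwines $\wedge$ and $\vee$. Every bimonotone hyperplane through the origin equals $\pi_{ij}^{-1}(\ell)$ for some line $\ell\subseteq\R^2$ through the origin of nonnegative (or vertical/horizontal) slope; so the hypothesis that $L$ lies in no bimonotone hyperplane forces $\pi_{ij}(L)$ to be neither $\{0\}$ nor such a line. Hence $\pi_{ij}(L)$ is either $\R^2$ or a line of strictly negative slope --- and the latter is impossible, since a negative-slope line through the origin is not a sublattice ($(1,m)\wedge(-1,-m)=(-1,m)$ lies off the line of slope $m<0$). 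So $\pi_{ij}(L)=\R^2$ for all $i\neq j$. It remains to see that a min-max closed subspace all of whose coordinate-plane projections are surjective must equal $\R^d$, contradicting $\dim L<d$; this follows by induction on $d$ (base $d=2$, where $\pi_{12}(L)=L=\R^2$): projecting out one coordinate gives a min-max closed subspace of $\R^{d-1}$ with all coordinate-plane projections surjective, hence $\R^{d-1}$ by induction, so $L+\R e_k=\R^d$ for every $k$; if $L\neq\R^d$ then $\dim L=d-1$ and $L=a^\perp$ with all $a_k\neq 0$, and a short case check on the sign pattern of $a$ produces $u,v\in a^\perp$ with $u\wedge v\notin a^\perp$ (e.g.\ $v=-u$ for $u=e_1-(a_1/a_2)e_2$ when the $a_k$ all share a sign, and two coordinate-axis-type vectors inside a sign-mixed triple of coordinates otherwise).

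\textbf{Main obstacle.} Everything above apart from one step is formal; the genuine content is the implication ``a min-max closed subspace with all coordinate-plane projections surjective is full-dimensional,'' which is precisely the substantive half of the Queyranne--Tardella description of sublattice polyhedra. If one reproves it by the induction above, the one subtlety worth noting is that one should first reduce to $\dim L=d-1$ before writing down the explicit pair $u,v$, so that these vectors are guaranteed to lie in $L$ itself and not merely in a hyperplane containing $L$.
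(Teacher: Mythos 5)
Your argument is correct, but it takes a genuinely different route from the paper's. The paper proves the lemma by a direct construction: write $L=\{x:Ax=0\}$, pass to the reduced row echelon form of $A$, and note that its last row $b$ is a non-bimonotone linear functional vanishing on $L$ (it has three nonzero entries, or two of equal sign); explicit $u,v\in L$ are then built so that $b\cdot(u\wedge v)\neq 0$, which handles arbitrary codimension in one step. You instead reduce the statement, via the identity $(u\wedge v)+(u\vee v)=u+v$ (the same identity the paper uses implicitly in its last sentence), to showing that $L$ is not min-max closed, and then either invoke the polyhedral form of the Queyranne--Tardella characterization or reprove it through the projections $\pi_{ij}(L)$ and induction down to the hyperplane case. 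The citation route is the shortest, but it leans on the unbounded version of the theorem, which the paper only states for polytopes (Corollary~\ref{cor:bimonotone}); your self-contained variant repairs this and is closest in spirit to the 2-D Projections Theorem, whereas the paper's construction is elementary, constructive, and avoids any induction. One small correction to the self-contained variant: surjectivity of all $\pi_{ij}(L)$ only forces the normal $a$ of the hyperplane $L=a^\perp$ to have support of size at least three, not $a_k\neq 0$ for every $k$. This is all you need, though: among three nonzero entries two share a sign, say $a_ia_j>0$, and the single construction $u=a_je_i-a_ie_j$, $v=-u$ (with $i,j$ taken inside the support of $a$) gives $a\cdot(u\wedge v)=\mp 2|a_ia_j|\neq 0$, so the sign-mixed case and the formula $u=e_1-(a_1/a_2)e_2$ with fixed indices are unnecessary. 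With that index bookkeeping your variant is complete.
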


\begin{proof}
Write $L = \{x: Ax = 0\}$, where $A\in\mathbb R^{(n-\dim L)\times d}$. Let $B$ be the reduced-row-Echelon form of $A$, and let $b \in \RR^d$ be the last row of $B$. Since $L$ is not in any bimonotone hyperplane, 
either $b$ has three nonzero entries, or it has two nonzero entries which have the same sign. Assume first that $$b = (\underbrace{0,\ldots,0}_{i-1}, b_i, \ldots, b_j, \ldots, b_k)$$ has three nonzero entries $b_i, b_j, b_k$, where $1\leq i<j<k\leq d$. Let $u\in\mathbb R^d$ be such that $u_i = \text{sign}(b_j)b_j$, $u_j = -\text{sign}(b_j)b_i$, and $u_s = 0$ for all $s >i$, $s\neq j$. For the first $i-1$ entries of $u$, use the reduced-row-Echelon form $B$ so that $Bu = 0$. Similarly, let $v\in\mathbb R^d$ be such that $v_j = -\text{sign}(b_j)b_k$, $v_k = \text{sign}(b_j)b_j$, and $v_s = 0$ for all $s \geq i, s\neq j,k$. For the first $i-1$ entries of $v$, use the reduced-row-echelon form $B$ so that $Bv = 0$. Now, consider the vector $u\wedge v$. We have that $(u\wedge v)_i = 0, (u\wedge v)_j = (-\text{sign}(b_j)b_i)\wedge (-\text{sign}(b_j)b_k)\neq 0, $ and $(u\wedge v)_k = 0$. Therefore, $b\cdot (u\wedge v) = b_j ((-\text{sign}(b_j)b_i)\wedge (-\text{sign}(b_j)b_k))\neq 0$, and thus $B(u\wedge v)\neq 0$. Thus, $(u\wedge v) \not\in L$, and therefore also $(u\vee v)\not\in L$. 

The second case where $b$ has two nonzero entries which have the same sign is similar. This completes the proof.
\end{proof}

Starting from $C^{(0)} = \conv(X)$, we iteratively construct the convex sets 
$$ C^{(i+1)} \,=\, \conv\left(C^{(i)} \cup (C^{(i)}\wedge C^{(i)}) \cup (C^{(i)} \vee C^{(i)})\right)
\,\quad \text{for} \,\,\, i=0,1,2,\ldots. $$
\begin{lemma}\label{lem:c.dimension}
If $\dim C^{(i)} < d$, then, almost surely, $\dim C^{(i+1)}> \dim C^{(i)}$. In particular, almost surely, $\dim C^{(d-2)} = d$.
\end{lemma}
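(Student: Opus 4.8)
The plan is to establish the first assertion --- that $\dim C^{(i)} < d$ forces $\dim C^{(i+1)} > \dim C^{(i)}$ almost surely --- and then deduce that $\dim C^{(d-2)} = d$. For the deduction, note that $C^{(0)} = \conv(X)$ and that, since $X$ consists of $n \geq 3$ i.i.d.\ points drawn from a density absolutely continuous with respect to Lebesgue measure on $\R^d$ (and $d \geq 2$), any three of them are almost surely affinely independent, so $\dim C^{(0)} \geq 2$ a.s. As $C^{(0)} \subseteq C^{(1)} \subseteq \cdots$, the dimension is nondecreasing and bounded by $d$; together with the strict increase while it is below $d$, this gives $\dim C^{(j)} \geq \min\{d,\, 2+j\}$ for all $j$, and in particular $\dim C^{(d-2)} = d$ (a finite intersection of almost sure events). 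So it remains to fix $i$ with $k := \dim C^{(i)} < d$ and show $\dim C^{(i+1)} > k$; on the almost sure event that $\dim C^{(0)} \geq 2$ we also have $k \geq 2$.

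I would argue by contradiction, assuming $\dim C^{(i+1)} = k$. Since $C^{(i)} \subseteq C^{(i+1)}$ and both have dimension $k$, their affine hulls coincide; denote this $k$-dimensional affine subspace by $A$. Then $C^{(i)} \wedge C^{(i)}$ and $C^{(i)} \vee C^{(i)}$, being subsets of $C^{(i+1)}$, are contained in $A$. Let $L$ be the linear subspace parallel to $A$. The key claim is that $L$ is not contained in any bimonotone hyperplane: if $L$ lay inside $\{x : c_i x_i + c_j x_j = 0\}$ with $c_i c_j \leq 0$, then translating by any point of $A$ would exhibit $A$ --- and hence $X \subseteq C^{(i)} \subseteq A$ --- inside an affine bimonotone hyperplane, contradicting Lemma~\ref{lem:x.not.bim}.

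With this claim, Lemma~\ref{lem:non_bim_hyperplane} applies (because $2 \leq \dim L = k < d$) and produces $u, v \in L$ with $u \wedge v \notin L$. Choose $p$ in the relative interior of $C^{(i)}$, so that $C^{(i)}$ contains a relative neighbourhood of $p$ in $A$; since $u, v \in L$, for all sufficiently small $\epsilon > 0$ both $p + \epsilon u$ and $p + \epsilon v$ lie in $C^{(i)}$. A coordinatewise computation, valid because $\epsilon > 0$, gives $(p + \epsilon u) \wedge (p + \epsilon v) = p + \epsilon(u \wedge v)$, so this point lies in $C^{(i)} \wedge C^{(i)} \subseteq C^{(i+1)} \subseteq A$. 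As $p \in A$, this forces $\epsilon(u \wedge v) \in L$, hence $u \wedge v \in L$ --- the desired contradiction. Therefore $\dim C^{(i+1)} > k$, which completes the proof.

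I expect the two cited lemmas to carry the substance of the argument, so the points needing care are the affine-versus-linear bookkeeping --- the translation that turns Lemma~\ref{lem:x.not.bim} (a statement about $X$) into the linear hypothesis of Lemma~\ref{lem:non_bim_hyperplane} (a statement about $L$) --- and the choice of the perturbation directions $u, v$ inside the relative interior of $C^{(i)}$, which is what keeps the perturbed points in $C^{(i)}$. I do not anticipate a deeper obstacle.
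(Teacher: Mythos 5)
Your argument is correct and follows essentially the same route as the paper: pick a relative-interior point of $C^{(i)}$, use Lemma~\ref{lem:x.not.bim} to rule out bimonotone hyperplanes containing its affine span, apply Lemma~\ref{lem:non_bim_hyperplane} to get directions $u,v$ whose min/max leave the parallel subspace, and perturb slightly so the resulting point of $C^{(i)}\wedge C^{(i)}$ lies outside the affine span; your contradiction framing and the explicit affine-versus-linear translation are only cosmetic refinements of the paper's direct version.
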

\begin{proof}
 Assume that $\dim C^{(i)} < d$. Consider the affine span of $C^{(i)}$, namely $a + L$, where $L$ is a $\dim(C^{(i)})$-dimensional subspace, and $a$ is a point in the relative interior of $C^{(i)}$ (with respect to its affine span). Then, there exists a ball $B(a, \epsilon)$ of radius $\epsilon$ and center $a$ such that $B(a, \epsilon)\cap (a+L) \subseteq C^{(i)}$. By Lemma \ref{lem:x.not.bim}, $C^{(0)}$ and thus $C^{(i)}$ almost surely are not contained in a bimonotone hyperplane. By Lemma~\ref{lem:non_bim_hyperplane}, there are vectors $u, v\in L$ such that $u\wedge v, u\vee v\not\in L$.   Now, let $c_\epsilon > 0$ be a large enough constant so that the vectors $u_\epsilon = u/c_\epsilon, v_\epsilon=v/c_\epsilon$ have norm less than $\epsilon$. Thus, $a + u_\epsilon, a+v_\epsilon \in B(a, \epsilon) \cap (a+L)$. But note that
\begin{align*}
&(a+u_\epsilon)\wedge(a+v_{\epsilon}) = a + (u\wedge v)/c_\epsilon\not\in (a+L)\quad \text{and} \\ 
&(a+u_\epsilon)\vee(a+v_{\epsilon}) = a + (u\vee v)/c_\epsilon\not\in (a+L).
\end{align*}
Thus, affine span$(C^{(i+1)}) \supsetneq $ affine span$(C^{(i)})$, and so $\dim C^{(i+1)} > \dim C^{(i)}$. Finally, as $n \geq 3$, almost surely, $\dim C^{(0)} \geq 2$,
and hence $\dim C^{(d-2)} = d$.
\end{proof}

\begin{corollary}\label{cor:dimension.mtp2}
Suppose $d \geq 2$. If $n \geq 3$, then $\MMconv(X)$ is full-dimensional a.s..
\end{corollary}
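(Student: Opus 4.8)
The plan is to derive Corollary~\ref{cor:dimension.mtp2} as an immediate consequence of the chain of lemmas just established, reducing everything to the dimension-growth statement of Lemma~\ref{lem:c.dimension}. First I would recall that $\MMconv(X)$ is a min-max closed convex set containing $X$, so it contains every set in the iterative construction $C^{(0)} = \conv(X)$, $C^{(i+1)} = \conv\bigl(C^{(i)} \cup (C^{(i)}\wedge C^{(i)}) \cup (C^{(i)}\vee C^{(i)})\bigr)$; indeed $\MMconv(X) \supseteq C^{(i)}$ for every $i$ by induction, since $\MMconv(X)$ is closed under $\wedge$, $\vee$, and taking convex hulls. Hence it suffices to show that some $C^{(i)}$ is full-dimensional.

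The core step is the following: since $n \geq 3$ and $d \geq 2$, Lemma~\ref{lem:x.not.bim} guarantees that almost surely $X$ is not contained in any bimonotone hyperplane, and therefore neither is $\conv(X) = C^{(0)}$, nor any $C^{(i)} \supseteq C^{(0)}$. Also, since $n \geq 3$ i.i.d.\ points drawn from a density absolutely continuous with respect to Lebesgue measure on $\R^d$ almost surely affinely span at least a $2$-dimensional flat, we have $\dim C^{(0)} \geq 2$ a.s. Now apply Lemma~\ref{lem:c.dimension}: as long as $\dim C^{(i)} < d$, the dimension strictly increases at the next step, $\dim C^{(i+1)} > \dim C^{(i)}$, almost surely. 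Starting from $\dim C^{(0)} \geq 2$ and incrementing by at least one each step, after at most $d-2$ steps the dimension reaches $d$, i.e.\ $\dim C^{(d-2)} = d$ a.s.

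Finally I would conclude: since $\MMconv(X) \supseteq C^{(d-2)}$ and $C^{(d-2)}$ is full-dimensional almost surely, the polytope $\MMconv(X)$ is full-dimensional almost surely, which is exactly the claim. (A degenerate check for $d = 2$: there $C^{(0)} = \conv(X)$ is already $2$-dimensional a.s., so no iteration is needed.)

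There is essentially no obstacle here — all the real work is in Lemma~\ref{lem:c.dimension}, whose proof in turn rests on Lemma~\ref{lem:non_bim_hyperplane} and Lemma~\ref{lem:x.not.bim}. The only point requiring mild care is the bookkeeping of the union of ``bad'' null events (that $X$ lies in some bimonotone hyperplane, and that each successive dimension fails to grow); each is a finite or countable union of Lebesgue-null events, so the intersection of their complements still has probability one, and the argument goes through with probability one as stated.
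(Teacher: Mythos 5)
Your proof is correct and follows exactly the route the paper intends: the corollary is an immediate consequence of Lemma~\ref{lem:c.dimension} (which already contains the statement $\dim C^{(d-2)}=d$ a.s.\ starting from $\dim C^{(0)}\geq 2$ for $n\geq 3$), combined with the observation that $\MMconv(X)\supseteq C^{(i)}$ for all $i$ since $\MMconv(X)$ is convex and min-max closed. Your explicit induction for the containment and the remark about unions of null events are fine and match the paper's implicit argument.
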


\begin{lemma}\label{lem:assumption} 
For $n \geq 3$ and $T\in\mathbb R$ there exists a constant $M_T$ such that for any MTP$_2$ log-concave density $\exp(f)$, if there is an $x\in\text{ MMconv}(X)$ such that $f(x) > M_T$, then the log-likelihood $\ell(\exp(f), X) := \frac1n \sum_{i=1}^nf(x_i)$ satisfies  $\ell(\exp(f), X)\leq T$.
\end{lemma}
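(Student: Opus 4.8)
\textbf{Proof proposal for Lemma \ref{lem:assumption}.}

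The plan is to exploit the fact that, by Corollary \ref{cor:dimension.mtp2}, for $n \geq 3$ the polytope $P := \MMconv(X)$ is a.s. full-dimensional, and then to argue exactly as in \cite{CSS}: a log-concave density that takes a huge value at a single interior point of a fixed full-dimensional body cannot integrate to $1$ unless it also takes large negative values somewhere, but the decay rate of a log-concave function forces it to be small (hence very negative in log scale) on a set of positive measure, and in particular at enough of the data points $X_i$ to push the log-likelihood below $T$. First I would fix the realization of $X$ (working a.s.) so that $P$ is a full-dimensional polytope; let $\vol(P) > 0$ and let $B = B(x_0, \rho) \subseteq P$ be a fixed ball contained in $P$, where $x_0$ is, say, the barycenter of $P$ and $\rho$ depends only on $X$.

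\medskip

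Next I would recall the standard quantitative fact about log-concave densities (see \cite[Lemma 2]{CSS} or \cite{Duembgen}): if $\exp(f)$ is a probability density on $\R^d$ that is log-concave, and $f(x) = a$ at some point $x$, then on the cone/ball of points ``behind'' $x$ relative to the bulk of the mass, $f$ decays at a controlled linear rate, so that $\int \exp(f) = 1$ forces an upper bound on $a$ in terms of how fast $f$ can drop. Concretely: since $\MMconv(X)$ equals the support of the MLE (Proposition \ref{prop_support}), WLOG $f = -\infty$ off $P$; a log-concave function on the bounded set $P$ with $\int_P \exp(f) = 1$ and $\max_P f =: a$ large must satisfy $f \leq a - c_1(a - \log(1/\vol P))$-type bounds away from its argmax. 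The cleanest route is: if $f(x) > M_T$ for some $x \in P$, then by concavity, along every segment from $x$ into $P$ the value $f$ is at least $M_T - \Lambda$ on a sub-segment of definite length, where $\Lambda$ is the total oscillation of $f$ on $P$; but $\int_P \exp(f) \leq \vol(P)\exp(\max_P f)$, so if $\max_P f$ were also large the integral would exceed $1$ — hence if $f(x) > M_T$ is large while $\int_P\exp f = 1$, the \emph{minimum} of $f$ on $P$ must be very negative, and by concavity this small value is attained on a region of positive measure near a vertex of $P$; since the $X_i$ lie in $P$ and concavity interpolates, for $M_T$ chosen large enough the average $\frac1n\sum f(X_i)$ is dragged below $T$.

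\medskip

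More precisely, the step I would actually carry out: pick a vertex $p$ of $P$ and the opposite facet; concavity of $f$ on the segment from $p$ to $x_0$ (interior) combined with $f(x) > M_T$ somewhere and $\int_{B(x_0,\rho)}\exp(f)\le 1$ (forcing $f \le \log(1/\vol B)$ on a positive-measure subset of $B$, hence $f(x_0') \le \log(1/\vol B)$ for some $x_0'$ near $x_0$) yields, by extrapolating the secant line beyond $x_0'$ toward $p$, that $f(p) \le \log(1/\vol B) - \kappa(M_T - \log(1/\vol B))$ for a constant $\kappa = \kappa(X) > 0$ depending only on the geometry of $P$. Since each $X_i \in P = \conv$ of the vertices plus added points, concavity gives $\frac1n\sum_i f(X_i)$ bounded above by a convex combination of the $f$-values at vertices of $P$, at least one of which (the worst vertex) is $\le \log(1/\vol B) - \kappa(M_T - \log(1/\vol B))$ while the rest are $\le \max_P f$; balancing, and using that $\max_P f \le M_T + O(1)$ is impossible to be too large given the integral constraint, produces a bound of the form $\ell(\exp(f),X) \le C_1 - C_2 M_T$ for constants $C_1, C_2 > 0$ depending only on $X$. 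Choosing $M_T := (C_1 - T)/C_2$ gives the claim.

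\medskip

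\textbf{Main obstacle.} The genuinely delicate point is making the ``a huge peak forces a very negative value at a definite-measure set, hence at some vertex'' implication quantitative and uniform over \emph{all} MTP$_2$ log-concave $f$ simultaneously — the peak location $x$ is arbitrary in $P$, so one cannot fix a single direction of descent in advance. The fix is that $P$ is a fixed bounded polytope: from any $x \in P$ one can reach a vertex of $P$ through $x_0$ within bounded ``concavity budget'', so the constant $\kappa$ can be taken uniform over $x \in P$. Getting the dependence of $\kappa$ on the (random but a.s. fixed) geometry of $P$ right, and confirming it does not degenerate, is where the real care is needed; this is precisely the place where the $n \geq 3$ / full-dimensionality hypothesis (Corollary \ref{cor:dimension.mtp2}) is essential, since a lower-dimensional $P$ would have $\vol(P) = 0$ and the argument collapses.
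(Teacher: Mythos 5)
There is a genuine gap: your argument never uses the MTP$_2$ hypothesis, and without it the statement of Lemma~\ref{lem:assumption} is simply false when $n\leq d$. For $d\geq 3$ and $n=3$ generic samples, $\conv(X)$ is a $2$-dimensional set of measure zero, and one can take a log-concave density that is an extremely concentrated ``pancake'' around the affine hull of $X$ (e.g.\ a product of a moderate $2$-dimensional log-concave density with a $1$-dimensional Gaussian of tiny variance in a transverse direction): its peak is arbitrarily large, yet all values $f(X_i)$ are also arbitrarily large, so the likelihood is huge, not $\leq T$. Such densities are excluded only by supermodularity, so any correct proof must invoke it. This is precisely why ``argue exactly as in \cite{CSS}'' cannot work here: the CSS argument needs $\conv(X)$ (the convex hull of the \emph{data}) to be full-dimensional, because concavity transfers the bound $f\geq m:=\min_i f(X_i)$ only to $\conv(X)$. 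Full-dimensionality of $P=\MMconv(X)$ (Corollary~\ref{cor:dimension.mtp2}) is of no use by itself, since concavity gives no lower bound for $f$ on $\MMconv(X)\setminus\conv(X)$. The paper bridges exactly this gap with the supermodular induction $f\geq 2^i m-(2^i-1)M$ on the iterated min-max sets $C^{(i)}$, where $M=\max f$; after $d-2$ steps one has a lower bound on the a.s.\ full-dimensional set $C^{(d-2)}$ (Lemma~\ref{lem:c.dimension}), and only then does the CSS-style ``shrink toward the peak and compare volumes'' argument produce the bound $\ell(\exp(f),X)\leq \frac{n-1}{n}M+\frac1n m$ with $m\lesssim cM-c'e^{(M-1)/d}$, which tends to $-\infty$ as $M\to\infty$.

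Your final step also uses concavity in the wrong direction. Concavity gives $f(X_i)\geq \sum_j \lambda_j f(v_j)$ whenever $X_i=\sum_j\lambda_j v_j$ is a convex combination of vertices of $P$; it does \emph{not} bound $\frac1n\sum_i f(X_i)$ \emph{above} by a convex combination of vertex values. So even granting that a huge peak forces $f$ to be very negative somewhere on $P$ (true when the support is exactly $P$, since superlevel sets have volume $\leq e^{-a}$), that negativity may be confined to the part of $P$ far from the data, while all $X_i$ sit near the peak; nothing in your chain prevents this. The obstacle you flag (uniformity of $\kappa$ over the peak location) is not the real difficulty; the missing ingredient is the mechanism — supermodularity — that converts largeness of $f$ at the few data points into largeness of $f$ on a set of positive Lebesgue measure.
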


\begin{proof}[Proof of Lemma \ref{lem:assumption}]
Let $\exp(f)$ be a log-concave MTP$_2$ density supported on MMconv$(X)$. Let $$m =\min_{i\in\{1,\ldots,n\}} f(x_i),\quad  M = \max_{x\in\mathbb R^d} f(x), \quad\text{ and }\quad Z \in \text{argmax}_{x\in\mathbb R^d} f(x).$$
By concavity of $f$, we have 
$$f(x)\geq m, \text{ for all } x\in C^{(0)},$$
since $C^{(0)} = \conv(X)$. We are going to show by induction that
$$f(x) \geq 2^{i}m - (2^i - 1)M, \text{ for all } x\in C^{(i)}.$$
We already have the base of the induction when $i=0$. Now, assume that it is true for some $i\geq 0$, and let $x\in C^{(i+1)}$ Then, there exist $a, b\in C^{(i)}$ and $x'\in C^{(i+1)}$ such that $x$ and $x'$ are the minimum and maximum of $a$ and $b$. Thus, since $\exp(f)$ is MTP$_2$, we have that
$$f(x) \geq f(a) + f(b) - f(x') \geq 2(2^im - (2^i-1)M) - M = 2^{i+1}m - (2^{i+1}-1)M,$$
which completes the induction.

Let $m' = 2^{d-2}m-(2^{d-2}-1)M$. Our function $f$ is bounded below by $m'$ on $C^{(d-2)}$ . Observe that for $M$ sufficiently large, we must have that $M - (2^{d-2}m-(2^{d-2}-1)M) > 1$ since $\exp(f)$ is a density and its integral over $C^{(d-2)}$ is at most 1. Now, note that for any $x\in C^{(d-2)}$, we have 
\begin{align*}f\left(Z + \frac1{M-m'}(x-Z)\right) &\geq \frac1{M-m'}f(x) + \frac{M - m'-1}{M-m'}f(Z)\\
&\geq \frac{m'}{M-m'} + \frac{(M-m'-1)M}{M-m'} = M-1,
\end{align*}
where we used concavity of $f$ in the first inequality. Hence, denoting Lebesgue 
measure on $\mathbb R^d$ by $\mu$, we have that
$$\mu\left(\{x: f(x)\geq M-1\}\right) \geq \mu\left(\left\{Z + \frac1{M-m'}C^{(d-2)}\right\}\right) = \frac{\mu(C^{(d-2)})}{(M-m')^d}.$$
Thus,
$$1 \geq \int \exp(f)(x)dx \geq e^{M-1}\frac{\mu(C^{(d-2)})}{(M-m')^d},$$
and, therefore, for $\exp(f)$ to be a density, we need $m' \leq \frac12e^{(M-1)/d}\mu(C^{(d-2)})^{1/d}$ for large $M$. Equivalently, since $m' = 2^{d-2}m - (2^{d-2}-1)M$, we need that
$$m \leq \frac{2^{d-2}-1}{2^{d-2}}M - \frac1{2^{d-1}}e^{(M-1)/d}\mu(C^{(d-2)})^{1/d}.$$
But then the log-likelihood function $\ell(\exp(f), X)$ satisfies
\begin{align*}\ell(\exp(f), X) &\leq \frac{n-1}nM + \frac1n\left(\frac{2^{d-2}-1}{2^{d-2}}M - \frac1{2^{d-1}}e^{(M-1)/d}\mu(C^{(d-2)})^{1/d}\right)\\
& = \frac1n\left(n-1 + \frac{2^{d-2}-1}{2^{d-2}}\right)M - \frac1{n2^{d-1}}e^{(M-1)/d}\mu(C^{(d-2)})^{1/d}.
\end{align*}
Note that as $M\to\infty$, this expression converges to $-\infty$. Therefore, for every $T\in\mathbb R$, there exists $M_T$ such that whenever $\max f(x) \geq M_T$, then, $\ell(\exp(f),X) \leq T$, which completes the proof of Lemma~\ref{lem:assumption}.
\end{proof}

\begin{proof}[Proof of Theorem~\ref{thm_ex_unique} for $\mtp$]
Let $T$ be the likelihood of the uniform density on MMconv$(X)$, and $M_T$ be the corresponding constant in Lemma \ref{lem:assumption}.  We write
$\mathcal{F}$ for the set of all log-concave $\mtp$
probability densities on $\RR^d$ with
$\text{support}(f) = \text{MMconv}(X)$ and $ \sup f \leq M_T $.

By Lemma \ref{lem:assumption} and Proposition \ref{prop_support}, any log-concave $\mtp$ density not in $\widetilde{\mathcal{F}}$ has likelihood strictly less than that of the uniform density on MMconv$(X)$. Thus, it is sufficient to solve \eqref{eq:optproblem} over the set of densities in $\widetilde{\mathcal{F}}$. 
Fix $n \geq 3$. By Lemma \ref{lem:c.dimension}, $\MMconv(X)$ is a.s. full-dimensional. Since $\widetilde{\mathcal{F}}$ is a set of uniformly bounded densities on a compact set, $\widetilde{\mathcal{F}}$ is equi-usc. Therefore, existence and consistency of the MLE follows from \cite[Theorem 3.1, Corollary 3.5]{royset2017constrained}
with $\pi \equiv 0$, $\epsilon = 0$, and $F = \widetilde{\mathcal{F}}$.

To see that the optimizer is a.e. unique, assume that both $f_1,f_2\in\tilde{\mathcal{F}}$ maximize the objective function $\psi_n$ in \eqref{eqn:psi.n}. The normalized geometric~mean
$$g(x) = \frac{\{f_1(x)f_2(x)\}^{1/2}}{\int_{\MMconv(X)} \{f_1(x)f_2(x)\}^{1/2} dx}$$
is in $\tilde{\mathcal{F}}$, with
$$ \begin{matrix}
\psi_n(g) &=& \frac1{2n} \sum_{j=1}^n\log f_1(x_j) + \frac1{2n}\sum_{j=1}^n\log f_2(x_j)
\qquad \qquad \\ & &\qquad -\log\left(\int_{\MMconv(X)}\{f_1(x)f_2(x)\}^{1/2}dx\right)-1  \\
&=& \frac{1}{2}\left(\frac{1}{n}\sum_{j=1}^n\log f_1(x_j) - 1\right) + 
\frac{1}{2}\left(\frac{1}{n}\sum_{j=1}^n\log f_2(x_j) - 1\right) \\
& & \qquad\qquad - \log\left(\int_{\MMconv(X)}\{f_1(x)f_2(x)\}^{1/2}dx\right) \\
&=& \psi_n(f_1) - \log\left(\int_{\MMconv(X)}\{f_1(x)f_2(x)\}^{1/2}dx\right).
\end{matrix} $$
By the Cauchy-Schwarz inequality,  we have
$$\int_{\MMconv(X)} \{f_1(x)f_2(x)\}^{1/2}dx\leq 1,$$ 
so $\psi_n(g)\geq \psi_n(f_1)$, with equality if and only if $f_1=f_2$ almost everywhere.
\end{proof}

\begin{lemma}\label{lem:dimension.llc}
If $\,n,d \geq 2$ then, almost surely, $L(X)$ is full-dimensional.
\end{lemma}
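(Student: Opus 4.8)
The plan is to mimic the $\mtp$ argument (Lemmas \ref{lem:x.not.bim}–\ref{lem:c.dimension} and Corollary \ref{cor:dimension.mtp2}) but with the min-max operation replaced by the $L^\natural$-operation $(x,y) \mapsto \bigl((x+\alpha\mathbf 1)\wedge y,\, x\vee(y-\alpha\mathbf 1)\bigr)$. Recall from Proposition \ref{prop:llc-extension} that $L(X)$ is the polytope cut out by the inequalities $y_i - y_j \le \max_{x\in X}(x_i-x_j)$ together with the box constraints $\min_{x\in X} x_i \le y_i \le \max_{x\in X} x_i$. In particular, $L(X)$ fails to be full-dimensional only if $X$ lies in an affine hyperplane of one of two types: a "difference hyperplane" $\{x_i - x_j = c\}$, or a "coordinate hyperplane" $\{x_i = c\}$ (the box constraints being tight). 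So the first step is to rule these out almost surely: since $f_0$ is supported on a full-dimensional set, the marginal of $(X_1)_i - (X_1)_j$ is absolutely continuous with respect to Lebesgue measure on $\R$, as is the marginal of $(X_1)_i$; hence for $n\ge 2$ the two points $X_1, X_2$ a.s.\ have distinct $i$-th coordinates and distinct $(i{-}j)$-differences for every pair $i\ne j$. This is the analogue of Lemma \ref{lem:x.not.bim}, and it already handles $d=2$, where $L(X)=\MMconv(X)$ (noting $2$ points suffice here, versus $3$ in the $\mtp$ case, because the LLC/box inequalities are "type $\le$" constraints that two generic points already violate on the relevant hyperplanes).

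Next I would set up the iterative construction exactly as before: $D^{(0)} = \conv(X)$ and
\[
 D^{(i+1)} \;=\; \conv\Bigl(D^{(i)} \,\cup\, \bigl\{(u+\alpha\mathbf 1)\wedge v,\; u\vee(v-\alpha\mathbf 1) : u,v\in D^{(i)},\,\alpha\ge 0\bigr\}\Bigr),
\]
and prove the analogue of Lemma \ref{lem:c.dimension}: if $\dim D^{(i)} < d$ then a.s.\ $\dim D^{(i+1)} > \dim D^{(i)}$, whence $\dim D^{(d-1)} = d$ (or $D^{(d-2)}=d$ once the base dimension is at least $2$). The mechanism is the same local argument used in the proof of Lemma \ref{lem:c.dimension}: pick $a$ in the relative interior of $D^{(i)}$ with affine span $a+L$, and it suffices to exhibit $u,v\in L$ and some $\alpha\ge 0$ with $(u+\alpha\mathbf 1)\wedge v\notin L$ (scaling $u,v$ by a large constant then keeps the perturbation inside the small ball $B(a,\epsilon)\cap(a+L)$, and translating by $a$ gives points of $D^{(i)}$ whose $L^\natural$-combination escapes $a+L$). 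So the crux reduces to a linear-algebra lemma replacing Lemma \ref{lem:non_bim_hyperplane}: \emph{a subspace $L$ with $2\le \dim L < d$ that is not contained in any hyperplane of the form $\{x_i = c\}$ or $\{x_i - x_j = c\}$ — equivalently, whose defining reduced row-echelon matrix has a row $b$ that is not a scalar multiple of $e_i$ or $e_i - e_j$ — admits $u,v\in L$ and $\alpha\ge 0$ with $(u+\alpha\mathbf 1)\wedge v \notin L$.}

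I expect this last lemma to be the main obstacle, and it is genuinely more delicate than its $\mtp$ counterpart, for two reasons. First, the $L^\natural$-operation only lands outside $L$ when the translation parameter $\alpha$ is chosen correctly relative to the coordinates of $u$ and $v$ — unlike the $\mtp$ case, the combination can collapse back into $L$ for unlucky $\alpha$ — so the construction of $u$, $v$ and $\alpha$ has to be done simultaneously and carefully. Second, one must correctly enumerate the "bad" rows $b$: both $e_i$ (which forces $L$ into a coordinate hyperplane, killing full-dimensionality through the box constraint) and $e_i - e_j$ (which forces a difference hyperplane). The strategy I would use: if $b$ has a nonzero entry outside the pattern of any $e_i - e_j$ — say three nonzero entries $b_i, b_j, b_k$, or two nonzero entries of the same sign — then build $u$ supported essentially on coordinates $\{i,j\}$ and $v$ supported on $\{j,k\}$ (with the first $\dim L$-many "pivot" coordinates corrected via the echelon form so that $u,v\in L$), and choose $\alpha$ so that the $\wedge$ with the $-\alpha\mathbf 1$ shift selects the entries producing a nonzero value of $b\cdot\bigl((u+\alpha\mathbf 1)\wedge v\bigr)$; then $(u+\alpha\mathbf 1)\wedge v\notin L$ and consequently $u\vee(v-\alpha\mathbf 1)\notin L$ as well. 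The residual cases $b = e_i - e_j$ and $b = e_i$ are precisely the ones excluded by the a.s.\ genericity of $X_1, X_2$ in the first step, which closes the loop. Combining the three steps gives $\dim L(X) = \dim D^{(d-1)} = d$ a.s., proving the lemma.
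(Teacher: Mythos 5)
Your ``first step'' is, by itself, a complete proof for every $d\geq 2$, and it is exactly the proof the paper gives: by Proposition~\ref{prop:llc-extension}, $L(X)$ is cut out solely by constraints of the two forms $y_i-y_j\leq \max_{x\in X}(x_i-x_j)$ and $\min_{x\in X}x_i\leq y_i\leq\max_{x\in X}x_i$, so if $L(X)$ fails to be full-dimensional, one of these must hold with equality on all of $L(X)\supseteq X$; that is, $x_i-x_j$ or $x_i$ is constant over the sample. Since $f_0$ has full-dimensional support, the one-dimensional marginals $x\mapsto x_i-x_j$ and $x\mapsto x_i$ of $X_1$ are absolutely continuous, so already two i.i.d.\ points a.s.\ violate any such equality -- contradiction, for all $d$, not only $d=2$. (This also explains why $n\geq 2$ suffices here versus $n\geq 3$ for $\mtp$: the hyperplanes to be excluded are the rigid families $\{x_i-x_j=c\}$ and $\{x_i=c\}$ rather than arbitrary bimonotone hyperplanes.) Incidentally, your remark that $L(X)=\MMconv(X)$ when $d=2$ is not correct in general -- Figure~1 of the paper shows the two polytopes differing in the plane -- but this plays no role.

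The genuine problem with the proposal as written is structural: for $d\geq 3$ you route the argument through an iterative construction $D^{(i)}$ and an $L^\natural$-analogue of Lemma~\ref{lem:non_bim_hyperplane}, and you explicitly leave that analogue unproven, flagging it yourself as the main obstacle (the simultaneous choice of $u,v,\alpha$ is indeed delicate). So, taken at face value, the proof is incomplete where you declare the crux to be. None of that machinery is needed for this lemma: in the paper the sets $D^{(i)}$ appear only in the LLC analogue of the likelihood-boundedness argument (Lemma~\ref{lem:assumption}), not in the proof of full-dimensionality, precisely because the explicit inequality description of $L(X)$ short-circuits the dimension-increment induction that $\MMconv(X)$ required. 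The fix is simply to recognize that your opening paragraph already closes the proof and to delete the rest.
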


\begin{proof}
Suppose it is not. Since $L(X)$ is $L^\natural$-convex, it must satisfy either $x_i - x_j = c_{ij}$ or $x_i = c_i$. In the first case, define $\pi_{ij}: \R^d \to \R, x \mapsto x_i-x_j \in \R$. Then $\pi_{ij}(L(X))$ is a point, and hence
$\pi_{ij}(X)$ is also a point. But $\pi_{ij}(X)$ are $n$ i.i.d points from a distribution whose density s absolutely continuous w.r.t. the Lebesgue measure on $\R$, since ${\rm supp}(f_0)$ has full dimension. Therefore, a.s. $\pi_{ij}(X)$ is not a singleton for $n \geq 2$, a contradiction. In the second case, define $\pi_i: \R^d \to \R, x \mapsto x_i \in \R$, and the applies a similar argument.
\end{proof}
\begin{proof}[Proof of Theorem~\ref{thm_ex_unique} for LLC]
For sets $C,C' \subset \R^d$, we set $C \oplus C' := \{(x - \alpha \mathbf{1}) \wedge y, (x + \alpha \mathbf{1}) \wedge y: \alpha \in \R\, x \in C, y \in C'\}$. 
We also replace the sets $C^{(i)}$ by $D^{(0)} = \text{conv}(X)$, and
$$ D^{(i+1)} = \conv(D^{(i)} \cup (D^{(i)} \oplus D^{(i)})). $$
The analogue of Lemma \ref{lem:assumption} for LLC has an identical proof, with the $D^{(i)}$ replacing the sets $C^{(i)}$. Similarly, the proof of Theorem 1.1 for the LLC case proceeds in
the same way as the proof for the $\mtp$ case, with $\tilde{\mathcal{F}}$ replaced~by
\begin{align*}
\mathcal{F} = \{f: \R^d \to [0,\infty):\; & f \mbox{ is log-concave LLC},  \\
&\int f = 1, \text{support}(f) = L(X), \sup f \leq M_T \}.
\end{align*}
This completes the proof of Theorem~\ref{thm_ex_unique}.
\end{proof}

\section{Proof of Theorem 3.4}
\label{app_3}

To prove Theorem~\ref{thm:tidy} for $d=2$, we use 

\begin{lemma}\label{lem:diagonal} 
	Let $f:\mathbb R^2\to \mathbb R$ be concave and piecewise linear on 
a polyhedral complex	$\Delta$, and let $u,v \in {\rm supp}(f)$.
	If $u \vee v$ and $u \wedge v$ lie in the same cell of $\Delta$ then $f$
	is supermodular on the four points,~i.e.
	\begin{equation}
	\label{eq:supon4}
	f(u)+f(v)\,\leq \,f(u\wedge v)+f(u \vee v). 
	\end{equation}
\end{lemma}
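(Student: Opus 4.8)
The plan is to exploit the elementary coordinatewise identity $u+v = (u\wedge v)+(u\vee v)$, which holds simply because $\min(a,b)+\max(a,b)=a+b$ for all reals. Geometrically this says that the segments $[u,v]$ and $[u\wedge v,\,u\vee v]$ are the two diagonals of the axis-parallel rectangle with corners $u,v,u\wedge v,u\vee v$, and hence they cross at a common midpoint $m := \tfrac12(u+v) = \tfrac12\big((u\wedge v)+(u\vee v)\big)$.

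First I would dispose of the degenerate case in which $u$ and $v$ are comparable in $(\mathbb R^2,\le)$: then $\{u\wedge v,\,u\vee v\}=\{u,v\}$ and \eqref{eq:supon4} holds with equality. So assume $u$ and $v$ are incomparable. Using the hypothesis that $u\wedge v$ and $u\vee v$ lie in a common cell $C$ of $\Delta$, together with the fact that cells of a polyhedral complex are convex, the whole segment $[u\wedge v,\,u\vee v]$ is contained in $C$; in particular $m\in C$. Since $f$ is piecewise linear on $\Delta$, its restriction to $C$ agrees with an affine function, and therefore
$$ f(m) \;=\; \tfrac12 f(u\wedge v) + \tfrac12 f(u\vee v). $$

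Finally I would invoke concavity of $f$. Since $u,v\in{\rm supp}(f)$, the values $f(u),f(v)$ are finite, and concavity gives $f(m)\ge \tfrac12 f(u)+\tfrac12 f(v)$. Comparing with the displayed equality yields $\tfrac12\big(f(u)+f(v)\big)\le \tfrac12\big(f(u\wedge v)+f(u\vee v)\big)$, which is exactly \eqref{eq:supon4}. The argument is essentially mechanical; the one place where the two-dimensional setting really enters is the observation that the $\le$-meet and $\le$-join of $u$ and $v$ are precisely the endpoints of the other diagonal of the rectangle on $[u,v]$, so that this diagonal --- and with it the shared midpoint $m$ --- lies inside the single cell $C$ on which $f$ is affine. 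I do not anticipate any obstacle beyond bookkeeping of the finiteness of the values involved.
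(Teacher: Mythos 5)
Your proposal is correct and is essentially the paper's own argument: both use the common midpoint $m$ of the two diagonals, the same-cell hypothesis to get $f(u\wedge v)+f(u\vee v)=2f(m)$ by affinity on that cell, and concavity to bound $f(u)+f(v)\le 2f(m)$. The extra bookkeeping (comparable case, finiteness) is fine but not needed beyond what the paper already implicitly handles.
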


\begin{proof}
	Let $m$ be the common midpoint of the segments between $u$ and $v$
	and between $u \vee v$ and $u \wedge v$. The``same face'' hypothesis implies that
	the right hand side of (\ref{eq:supon4}) is equal to $2f(m)$.
	Since $f$ is concave, the left hand side of
	(\ref{eq:supon4}) is bounded above by $2f(m)$.
\end{proof}

\begin{proof}[Proof of Theorem~\ref{thm:tidy}]
When $X = \{0,1\}^d$, if $y$ is a vector of tight $\mtp$ heights, then  $h_{X,y}$ equals the Lov\'{a}sz extension \cite[\S 7]{murota2003discrete}, which is $L^\natural$-concave, and thus it is also $\mtp$. The case $X \subseteq \prod_{i=1}^d\{a_i,b_i\}$ reduces to the discrete cube by an invertible map in each coordinate that preserves the bimonotone property.  Now we will prove the case when $X \subset \R^2$ and min-max closed. Note that a polygon in $\R^2$ is not bimonotone if and only if it has an edge with a negative slope. Let $y$ be a set of tight $\mtp$ heights on $X$ and let $h_{X,y}$ be the corresponding tent function. Suppose for contradiction that $h_{X,y}$ is not supermodular. By Theorem~\ref{thm:BimonotoneSubdivision}, there exists an edge of $h_{X,y}$ with negative slope. Let $x,x' \in X$ be the two vertices of this edge. Since $x-x'$ has negative slope, $x$ and $x'$ are not comparable in the lexicographic ordering of $\R^d$. Since $X$ is min-max closed, $x\wedge x',x\vee x' \in X$. As $y$ is $\mtp$,
\begin{equation}\label{eqn:y.leq}
y(x') + y(x) \leq y(x \wedge x') + y(x \vee x'). 
\end{equation}
Since $[x,x']$ is an edge of the regular subdivision, and since $h_{X,y}$ is concave,
\begin{equation}\label{eqn:h.geq}
h_{X,y}(x') + h_{X,y}(x) > h_{X,y}(x \wedge x') + h_{X,y}(x \vee x'). 
\end{equation}
Since $y$ is concave, $h_{X,y}$ equals $y$ at all points of $X$. Therefore, \eqref{eqn:y.leq} and \eqref{eqn:h.geq} cannot simultaneously hold. This is the desired contradiction. 
\end{proof}

\section{Proofs for Section 4}
\label{app_4}

\begin{lemma}\label{lem:mmconv} 
For a finite set $X \subset \R^d$, let 
$$ \begin{matrix} & \min(X) = (\min_{x \in X}x_i: i = 1, \dots, d)\, \in\, \R^d, \\ \mbox{ and } 
& \max(X) = (\max_{x \in X}x_i: i = 1, \dots, d) \,\in\, \R^d. \end{matrix} $$
For any set $X\subset\mathbb R^2$, its min-max convex hull equals
$$\MMconv(X) = \conv(X\cup\{\min(X), \max(X)\}).$$
\end{lemma}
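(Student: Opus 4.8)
The plan is to show the two sets $\MMconv(X)$ and $P := \conv(X\cup\{\min(X),\max(X)\})$ coincide by establishing mutual containment. For the inclusion $\MMconv(X)\subseteq P$, I would verify that $P$ is both convex and min-max closed; since $\MMconv(X)$ is by definition the smallest such set containing $X$, this gives the containment. Convexity of $P$ is immediate because it is a convex hull. For min-max closedness, note that in $\R^2$ it suffices (by Corollary~\ref{cor:bimonotone}) to check that $P$ is cut out by bimonotone inequalities, or alternatively one can argue directly: given $u,v\in P$, one shows $u\wedge v, u\vee v\in P$. The key observation is that $P$ is the polygon whose vertices form a chain-like boundary between $\min(X)$ and $\max(X)$; every edge of $\conv(X\cup\{\min(X),\max(X)\})$ has nonnegative slope, precisely because $\min(X)$ is the bottom-left corner and $\max(X)$ the top-right corner of the bounding box, so no edge can have negative slope without leaving the box or contradicting that these corner points are in $P$. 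A polygon all of whose edges have nonnegative slope is min-max closed, which is the planar case of Corollary~\ref{cor:bimonotone}.

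For the reverse inclusion $P\subseteq\MMconv(X)$, it suffices to show that $\min(X)$ and $\max(X)$ both lie in $\MMconv(X)$, since $\MMconv(X)$ already contains $X$ and is convex, hence contains $\conv(X\cup\{\min(X),\max(X)\})$. To see $\min(X)\in\MMconv(X)$: writing $X=\{x_1,\dots,x_n\}$, the point $x_1\wedge x_2\wedge\cdots\wedge x_n$ equals $\min(X)$, and since $\MMconv(X)$ is min-max closed and contains each $x_i$, iterated meets keep us inside $\MMconv(X)$; thus $\min(X)\in\MMconv(X)$. The argument for $\max(X)$ is identical using iterated joins. This completes both containments.

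The only place requiring care is the claim that every edge of $\conv(X\cup\{\min(X),\max(X)\})$ has nonnegative slope. Suppose an edge $[a,b]$ of this polygon had strictly negative slope, say $a_1<b_1$ and $a_2>b_2$. Since $\min(X)$ has the smallest first and second coordinates among all the generators, and $\max(X)$ the largest, the polygon is contained in the axis-aligned box $[\min(X)_1,\max(X)_1]\times[\min(X)_2,\max(X)_2]$ and both corners $\min(X)=(\min(X)_1,\min(X)_2)$ and $\max(X)=(\max(X)_1,\max(X)_2)$ are vertices of it. A supporting line of the polygon along a negatively-sloped edge would then have to separate one of these two corners from the polygon, contradicting that both corners lie in $P$. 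Hence all edges have nonnegative slope, and $P$ is min-max closed, which is the crux of the first inclusion. Everything else is a routine application of the definitions of min-max closure and convex hull.
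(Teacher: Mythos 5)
Your proposal is correct and follows essentially the same route as the paper: both inclusions are handled the same way, with $\min(X),\max(X)\in\MMconv(X)$ via iterated meets and joins, and min-max closedness of $\conv(X\cup\{\min(X),\max(X)\})$ obtained from Corollary~\ref{cor:bimonotone} by checking that no edge has strictly negative slope. The only (inessential) difference is how that last fact is verified: you rule out a negatively sloped edge by a supporting-line argument using the two corner points, while the paper splits the boundary into the parts above and below the diagonal from $\min(X)$ to $\max(X)$ and invokes convexity directly.
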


\begin{proof}[Proof of Lemma \ref{lem:mmconv}] First, note that $\conv(X\cup\{\min(X), \max(X)\})\subseteq\text{MMconv}(X)$. Now, we will show that the sides of the polygon $\conv(X\cup\{\min(X), \max(X)\})$ are bimonotone. By Corollary~\ref{cor:bimonotone}, this implies the claim.

First consider the sides of the polygon located above the diagonal between $\min(X)$ and $\max(X)$. Traveling from $\min(X)$ to $\max(X)$, the sides are either vertical or have nonnegative slope. This is because  they have first coordinate bigger than or equal to that of $\min(X)$, and $\conv(X\cup\{\min(X), \max(X)\})$ is convex. Similarly, the sides below the diagonal between $\min(X)$ and $\max(X)$ are either vertical or have nonnegative slope. Therefore, all of the sides of $\conv(X\cup\{\min(X), \max(X)\})$ are bimonotone.
\end{proof}

\begin{lemma}
Algorithm \ref{alg:supermodular_heights.2} outputs the set of inequalities that defines the set of tight $L^\natural$-concave heights for any $L^\natural$-tidy set $X \subset \Z^d$.
\end{lemma}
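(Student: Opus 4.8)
The plan is to show that the inequalities produced by Algorithm~\ref{alg:supermodular_heights.2} are (i) necessary and (ii) sufficient for a height vector $y \in \RR^{|X|}$ to be a tight $L^\natural$-concave vector on an $L^\natural$-tidy set $X \subset \Z^d$. Necessity is immediate: each inequality
$y(x) + y(x+e_i+e_j) - y(x+e_i) - y(x+e_j) \geq 0$
added by the algorithm (with $e_0 = -\sum_{k=1}^d e_k$) is a special instance of the defining inequalities of $L^\natural$-concavity in Definition~\ref{def:supermodular_heights}, taken at $\alpha = 1$ with the four relevant points among $x, x+e_i, x+e_j, x+e_i+e_j$. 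Since $X$ admits a discrete $L^\natural$-extension and equals it (being LLC-tidy), these four points lie in $X$ exactly when the algorithm's membership test succeeds, so every listed inequality is valid for any $L^\natural$-concave $y$. Hence $\mathcal{S}_{\text{output}} \supseteq \mathcal{S}_{L^\natural\text{-concave}}$ as inequality systems, i.e.\ the solution cone of the output contains the true cone.

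For the reverse containment — the crux of the lemma — the plan is to invoke the local-to-global characterization of $L^\natural$-concave functions on the integer lattice, namely \cite[Proposition 7.5]{murota2003discrete} (the discrete analogue of the fact that a tent function is $L^\natural$-concave iff its cells are $L^\natural$-convex, cf.\ the Murota theorem quoted in Section~\ref{sec_5}). That result states that a function on a discrete $L^\natural$-convex set is $L^\natural$-concave if and only if it satisfies the ``discrete midpoint'' or nearest-neighbor inequalities involving $x$, $x+e_i$, $x+e_j$, $x+e_i+e_j$ for $i,j \in \{0,1,\dots,d\}$, $i<j$, whenever those points are all in the domain. These are precisely the inequalities enumerated by Algorithm~\ref{alg:supermodular_heights.2}. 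So a vector $y$ satisfying the output inequalities extends to an $L^\natural$-concave piecewise-linear function on $L(X) = \conv(\tilde X) = \conv(X)$; because $X$ is LLC-tidy, the tent function $h_{X,y}$ agrees with this extension, so $h_{X,y}$ is $L^\natural$-concave and $y$ is tight. This gives $\mathcal{S}_{\text{output}} \subseteq \mathcal{S}_{L^\natural\text{-concave}}$, and combined with necessity, equality.

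The main obstacle is bookkeeping around the role of the extra direction $e_0$ and the tightness claim. One must check that the translation-invariant formulation of $L^\natural$-concavity (the $\alpha\mathbf 1$ shift) is correctly captured by allowing $i,j$ to range over $\{0,1,\dots,d\}$ rather than $\{1,\dots,d\}$ — this is exactly why $e_0 = -\sum e_k$ is introduced, and it must be reconciled with the statement of \cite[Proposition 7.5]{murota2003discrete}, which is typically phrased for $L^\natural$-convex functions on $\Z^d$ using coordinate directions plus the all-ones vector. I would also need to argue that LLC-tidiness of $X$ (so $X = \tilde X$ and $L(X) = \conv(X)$ by Proposition~\ref{prop:llc-extension}) is what guarantees the finitely many listed local inequalities suffice: on a general (non-tidy) $X$ the local inequalities would not imply global $L^\natural$-concavity of $h_{X,y}$, which is the whole point of restricting to tidy configurations. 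The remaining verifications — that the algorithm's nested loops enumerate all such adjacent quadruples exactly once and that no redundant or missing inequalities arise — are routine.
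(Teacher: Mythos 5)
Your proposal is correct and takes essentially the same route as the paper: necessity because the listed inequalities are instances of the definition, and sufficiency by \cite[Proposition 7.5]{murota2003discrete}, which reduces $L^\natural$-concavity to exactly the nearest-neighbour quadruple inequalities the algorithm enumerates (once each, with $x$ the minimal point). The reconciliation of $e_0$ with Murota's statement that you flag as an obstacle is handled in the paper by the $\sharp$ lift identifying $\R^d$ with the hyperplane $\sum_{i=0}^{d} x_i = 0$ in $\R^{d+1}$, under which the $\alpha\mathbf{1}$ shifts become ordinary min/max comparisons, so no separate extension or tightness argument is needed.
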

\begin{proof}
Let $\sharp$ be the operation that identifies $\R^d$ with the subspace of $\R^{d+1}$ defined by $\sum_{i=0}^d x_i = 0$. By definition, $\mathcal{S}$ contains all inequalities of the form $y(x) + y(x') - y(x \vee x') - y(x \wedge x') \geq 0$ for pairs $x,x' \in \tilde{X}^\sharp$. By \cite[Proposition 7.5]{murota2003discrete}, it is sufficient to include these inequalities
 for all pairs $x,x' \in \tilde{X}^\sharp$ such that $\max_i |x_i - x'_i| = 1$. To avoid listing the same 
 inequality multiple times, Algorithm \ref{alg:supermodular_heights.2} goes through 
 each $x \in \tilde{X}^\sharp$ listing all inequalities where $x$ is the minimal point amongst the four points involved.  
\end{proof}

\section{Proof of Theorem 5.1}
\label{app:thmBimonotone}

\begin{proof}[Proof of Theorem~\ref{thm:BimonotoneSubdivision}]
	The only-if direction is simpler, and we prove it first. Let $P$ be a cell of $\Delta$. 
	Suppose that $P$ is not min-max closed. Then we can find $u,v\in P$ such that
	 $u\wedge v \not\in P$ or $u\vee v \not\in P$.  Since $P$ is a convex polyhedron, we have
	$\frac{1}{2}(u+v)\in P$, and it follows that
	$$f(u) + f(v) \,=\, 2 \cdot f(\frac{1}{2} (u+v)) \,>\, f(u\wedge v) + f(u\vee v).$$
	The strict inequality follows because $f$ is concave, but not linear on the segment between $u\wedge v$ and $u\vee v$. 
	This contradicts the assumption that $f$ is supermodular. Hence the polyhedron $P$ is min-max closed and by Corollary~\ref{cor:bimonotone},  $P$ is defined by bimonotone linear inequalities.

	We now prove the if-direction. Suppose that each cell of $\Delta$ is defined by bimonotone linear inequalities.
	Let $u,v \in |\Delta|$. Our goal is to show that $f(u)+f(v)\leq f(u\wedge v) + f(u\vee v)$.
	Let $L$ be the two-dimensional plane in $\RR^d$ that contains $u,v, u\wedge v$ and $u\vee v$. After relabeling
	we can assume that $u_i\geq v_i$ for $i=1,\ldots, k$, and $u_i < v_i$ for $i=k+1,\ldots, d$. The plane equals
	$$ \begin{matrix} L \quad =\quad u \wedge v & + &
	\text{span}\bigl\{(0,\dots, 0, u_{k+1}-v_{k+1}, \dots, u_d-v_d)^T, \\  
	&& \quad (v_1-u_1,\dots, v_k-u_k, 0, \dots, 0)^T \bigr\}. \end{matrix} $$
	From this representation we see that $L$ is defined by $d-2$ linear equations that are bimonotone, namely
	$k-1$ in the first $k$ coordinates, and $d-k-1$ in the last $d-k$ coordinates.
	We identify $L$ with $\RR^2$ via the following order-preserving affine-linear isomorphism  $\RR^2 \rightarrow L$:
	$$ \begin{matrix} (\alpha,\beta) & \mapsto &
	u \wedge v\,\,+\,\, \alpha(0,\dots, 0, u_{k+1}-v_{k+1},\dots, u_d-v_d)^T
	\\ &&  \qquad + \,\,\beta(v_1-u_1,\dots, v_k-u_k,0, \dots, 0)^T. \end{matrix} $$
	This isomorphism preserves the property of a linear function to be bimonotone. Indeed, suppose $\ell$
	is bimonotone on $\RR^d$ and does not vanish on $L$.
	Let $\ell(z) = c + a_iz_i + a_j z_j$, where $a_ia_j \leq 0$. 
	The restriction of $\ell$ to $L$ is the following affine-linear form in the coordinates $\alpha,\beta$:
    \small
	$$ \begin{matrix}
	\ell'(\alpha,\beta)= \\
     = c \,+\, a_i \cdot ((u \wedge v)_i + \alpha(0 \vee (u_i-v_i))) 
	\,+\, a_j \cdot ((u \wedge v )_j + \beta(0\vee (v_j-u_j))) \\
	 = (c+a_i( u \wedge v)_i + a_j ( u \wedge v)_j) + (a_i(0\vee (u_i-v_i))) \cdot \alpha + (a_j(0\vee(v_j-u_j))) \cdot \beta.
	\end{matrix}
	$$
    \normalsize
	This function is bimonotone because $\,(a_i(0\vee (u_i-v_i)))\cdot (a_j(0\vee (v_j-u_j)))\leq 0$.
	
	The restriction  of  $f$ to $L \simeq \RR^2$ is a piecewise-linear concave function 
	$f_L$ in $(\alpha, \beta)$,
	whose support is the restriction $\Delta|_L$ of $\Delta$ to $L$. We apply the reasoning above to each linear function 
	$\ell$ that vanishes on a codimension one cell of $\Delta$. This shows that $f|_L$
	induces a bimonotone subdivision on its support $|\Delta| \cap L$. Our claim 
	follows if $f|_L$ is supermodular. This reduces the proof of
	Theorem~\ref{thm:BimonotoneSubdivision} to the special case $d=2$, where we know it already.
\end{proof}

\bigskip

\end{document}